\theoremstyle{plain}
\theoremstyle{plain}
\newtheorem{theorem}{Theorem}[section]
\newtheorem{proposition}[theorem]{Proposition}
\newtheorem{lemma}[theorem]{Lemma}
\theoremstyle{definition}
\newtheorem{defin}[theorem]{Definition}
\newtheorem{remark}[theorem]{Remark}
\theoremstyle{remark}
\def\bk{\color{black}}
\numberwithin{equation}{section}
\def\Div{\operatorname{div}}
\def\into{\int_{\Omega}}
\DeclareMathOperator{\diver}{div}
\DeclareMathOperator{\sgn}{sgn}
\DeclareMathOperator{\R}{\mathbb{R}}
\newcommand{\car}[1]{\raise1pt\hbox{$\chi$}_{#1}}
\newcommand{\DM }{\mathcal{DM}^\infty }
\def\re{\mathbb{R}}
\newcommand{\res}{\!\!\mathop{\hbox{
			\vrule height 7pt width .5pt depth 0pt
			\vrule height .5pt width 6pt depth 0pt}}
	\nolimits}
\begin{document}
	\title[Competing gradient, singular and $L^1$ terms in nonlinear elliptic equations]{Finite energy solutions for nonlinear elliptic equations with competing gradient, singular and $L^1$ terms}

	\author[F. Balducci]{Francesco Balducci}
	\author[F. Oliva]{Francescantonio Oliva}
	\author[F. Petitta]{Francesco Petitta}
	\address{Francesco Balducci
		\hfill \break\indent
		Dipartimento di Scienze di Base e Applicate per l' Ingegneria, Sapienza Universit\`a di Roma
		\hfill \break\indent
		Via Scarpa 16, 00161 Roma, Italy}
	\email{\tt francesco.balducci@uniroma1.it}
	\address{Francescantonio Oliva
		\hfill \break\indent
		Dipartimento di Scienze di Base e Applicate per l' Ingegneria, Sapienza Universit\`a di Roma
		\hfill \break\indent
		Via Scarpa 16, 00161 Roma, Italy}
	\email{\tt francescantonio.oliva@uniroma1.it}
	\address{Francesco Petitta
		\hfill \break\indent
		Dipartimento di Scienze di Base e Applicate per l' Ingegneria, Sapienza Universit\`a di Roma
		\hfill \break\indent
		Via Scarpa 16, 00161 Roma, Italy}
	\email{\tt francesco.petitta@uniroma1.it}

	\keywords{$1$-laplacian, $p$-laplacian, natural growth gradient terms, regularizing effects, $L^1$ data, singular problems} \subjclass[2020]{35J25, 35J60,  35J75,}

	\begin{abstract}
		
		In this paper we deal with  the  following boundary value problem
		\begin{equation*}
			\begin{cases}
				-\Delta_{p}u + g(u) | \nabla u|^{p} = h(u)f & \text{in $\Omega$,} \\
				u\geq 0 & \text{in $\Omega$,} \\
				u=0 & \text{on $\partial \Omega$,} \
			\end{cases}
		\end{equation*}
		in a domain  $\Omega \subset \mathbb{R}^{N}$ $(N \geq 2)$, where $1\leq p<N $,  $g$ is  a positive and continuous function on $[0,\infty)$,  and $h$ is a continuous function on $[0,\infty)$ (possibly blowing up at the origin). We show how the presence of regularizing terms $h$ and $g$ allows to prove existence of finite energy solutions for nonnegative  data $f$ only belonging to $L^1(\Omega)$.  
	\end{abstract}
	
	\maketitle
	\tableofcontents
	
	\section{Introduction}
	
	The aim of this work is the study of the following boundary value problem
	\begin{equation}\label{problemone}
		\begin{cases}
			-\Delta_{p}u + g(u) | \nabla u|^{p} = h(u)f & \text{in $\Omega$,} \\
			u \ge 0 & \text{in $\Omega$,} \\
			u=0 & \text{on $\partial \Omega$,} \
		\end{cases}
	\end{equation}
	where $1\le p <N$.   Here $\Omega \subset \mathbb{R}^{N}$,  with $N \ge 2$,  is an open and bounded set with Lipschitz boundary,  $\Delta_{p} u:=\operatorname{div}(|\nabla u|^{p-2} \nabla u)$ is the usual  $p-$laplacian,   and  $g$ is  a positive and continuous function on $[0,\infty)$. Finally $h$ is a continuous function on $[0,\infty)$ that is allowed to blow-up at the origin and is bounded at infinity. In particular,  the case of continuous, bounded and non-monotone functions $g,h$ is covered by the above assumptions.
	
	\medskip
	
	Our goal is the study of existence of finite energy solutions to 
	\eqref{problemone}; by a finite energy solution we mean a function lying in the natural space in which such problems are naturally built-in in case of smooth nonlinear terms and data, i.e. $u \in \textit{W}^{1,p}_{0}(\Omega)$ if $p>1$ and 
	$u \in \textit{BV}(\Omega)$ if $p=1$.

	We are interested to deeply explore the interplay between the first order absorption term involving $g$ and the zero order and possibly singular nonlinearity $h$ in presence of a merely integrable datum $f$. In particular we deal with the regularizing effect,  in terms of Sobolev regularity,  provided by the lower order terms to the solutions of problems as  \eqref{problemone}. 
	
	\medskip

	These kinds of regularizing effects given by the gradient terms  with natural growth in elliptic problems with rough data are nowadays quite classical. If $h\equiv 1$ it is well known that, see for instance \cite{bbm,po}, quadratic gradient terms satisfying a sign condition (i.e. $g(s)s\geq 0$)  in problems as \eqref{problemone}  gives finite energy solutions if $f$ is a merely integrable function (or even a measure). Let also stress that it is well known that solutions have, in general, infinite energy if $g\equiv 0$ (see \cite{b6}). 
	
	\medskip 
	
	On the other hand problems as  $(\ref{problemone})$  with  $g \equiv 0$ and in presence of possibly singular nonlinearities has reached great attention in the last decades starting from the pioneering papers \cite{crt,lm,st}; if $p=2$, $h(s)=s^{-\gamma}$ $(\gamma>0)$,  and $f$ is smooth, existence of classical  solutions in  $C^2(\Omega)\cap C^{0} (\overline{\Omega})$ follows by suitable approximation with desingularized problems. Only later, in \cite{11}, the authors prove existence of a distributional solution in case of a Lebesgue datum $f$ and remarked the regularizing effect given by the right-hand of $(\ref{problemone})$ when, once again, $p=2$ and $h(s)=s^{-\gamma}$ $(\gamma>0)$: namely the solution, compared to the case $\gamma =0$,  always lies in a smaller Sobolev space when $0<\gamma \le 1$. Moreover if $\gamma=1$ the solution is always in ${H}^{1}_{0}(\Omega)$ even if $f$ is just an $\textit{L}^{1}-$function as one can formally deduce by taking $u$ itself as test function in $(\ref{problemone})$ while if $\gamma > 1$ the solution belongs only locally to ${H}^{1}(\Omega)$ and the boundary datum is meant as a suitable power of the solution having zero Sobolev trace. {It is also worth to mention that, in general, one can not expect finite energy solutions for $\gamma\geq 3-\frac{2}{m}$ if $f\in L^m(\Omega)$, $m> 1$ (\cite{lm,op}).}

	\medskip 
	
	Similar results, again in the case $g \equiv 0$,  were then extended to the case $p\geq 1$; i.e. let us consider nonnegative finite energy solutions for
	$$
	\begin{cases}	
		\displaystyle -\Delta_{p}u  = \frac{f}{u^\gamma} & \text{in $\Omega$,} \\
		u=0 & \text{on $\partial \Omega$.} \	
	\end{cases}
	$$
	It was shown in \cite{dc} that, if $p>1$,  then finite energy solutions   exist either,  for smooth datum $f$,  up to $\gamma<2+\frac{1}{p-1}$ {or} if $\gamma \leq 1$ with  $f\in L^{m}(\Omega)$ for some $m\geq 1$. As observed in \cite{dgop}, the formal case of   $p\to 1^{+}$ is also included as finite energy BV solutions are expected to exist either for any $\gamma>0$ in case of smooth $f$, or for $\gamma\leq 1$ in the general case of $f\in L^{m}(\Omega)$ for some $m\geq 1$.  
	\medskip 
	
	Problems as 
	$$
	\begin{cases}
		-\Delta_{1}u + g(u) | D u| = f & \text{in $\Omega$,} \\
		u=0 & \text{on $\partial \Omega$,} \
	\end{cases}
	$$
	have also been recently considered as a model for the level set formulation proposed in \cite{hi} for the inverse mean curvature flow (see also \cite{ms}) in order to prove the well-known Penrose inequality in the case of a single black hole.

	From the purely mathematical point of view,   regularizing effect of the presence of  gradient terms (also in case of a nonlinearity  $g$ with a generic sign) and $f\in L^N(\Omega)$ was recently investigated (see \cite{4, ms, ls, gop}). Among the others let only stress that solutions to these problems are, in general,  $BV$ functions with no jump part in its gradient. 
	
	\medskip
	
	The {main} goal of our study concerns  problems as 
	\begin{equation}\label{pb1}
		\begin{cases}
			-\Delta_{1}u + g(u) | D u|  = h(u)f & \text{in $\Omega$,} \\
			u=0 & \text{on $\partial \Omega$,} \
		\end{cases}
	\end{equation}
	where $g$ is a positive and bounded continuous function on $[0,\infty)$, $h$ is a continuous function on $[0,\infty)$ (possibly blowing up at the origin) and bounded at infinity, and $f$ is a nonnegative function in $L^{1}(\Omega)$. In particular we will focus on the interplay between the data $g, h$ and $f$ in order to get nontrivial solutions in $BV(\Omega)$, the space of functions in $L^{1}(\Omega)$ whose derivatives have finite total bounded variation over $\Omega$. 
	We stress again that this goal is forbidden in presence  of merely integrable data   whenever  $g\equiv 0$ (see  \cite{lops}). It is also worth mentioning that, if this is the  case, existence of solutions is expected  for more regular data, but only under a suitable smallness condition on the norm (\cite{dgs, dgop}),  which here is not requested.  
	
	\medskip
	
	As a further feature of these types of problems with gradient type absorption terms one has  that solutions $u$ of \eqref{pb1} are in fact "zero" $\mathcal{H}^{N-1}$a.e. on $\partial\Omega$  in contrast with the case of reaction terms (\cite{ls,gop}) or no reaction at all (\cite{dgop}) in which constant solutions are allowed. 
	\medskip 
	
	We also stress that a standard approach to deal with $1$-Laplace type problems as \eqref{pb1} consists in approximate them  with $p$-Laplace ones where $p>1$. 
	Our method will follow this line but let us emphasize that the case $p>1$ is interesting and new as well; so that, in Section \ref{sec:p>1} we set, as a preparatory tool but in full generality,  the theory of existence and weak regularity of solutions for problems as in \eqref{problemone}. 
	
	\medskip
	
	The plan of the paper is the following: in Section \ref{sec:prel} we provide some preliminaries tools and notation. In Section \ref{sec:p>1} we study the Dirichlet problem in presence of a principal operator modeled by the $p$-Laplacian with $p>1$; apart from being interesting itself, it is the preparatory study for the main Section \ref{sec:p=1}, in which the limit case $p=1$ is investigated.  Finally in Section \ref{sec:ex}, we give some further insights on  how to extend the result in various directions as the case of merely nonnegative data  and the case of a nonlinearity $g$ possibly blowing-up at infinity. 
	
	\section{Notation and preparatory tools}
	\label{sec:prel}
	
	For the entire paper $\Omega$ denotes an open bounded subset of $\R^N$, for $N\ge 2$,  with Lipschitz boundary. We stress that for the results of Section \ref{sec:p>1} the Lipschitz regularity is not needed.
	By $\mathcal H^{N-1}(E)$ we mean the $(N - 1)$-dimensional Hausdorff measure of a set $E$ while $|E|$ stands for its $N$-dimensional Lebesgue measure. 
	The space $\mathcal{M}(\Omega)$ is the usual one of Radon measures with finite total variation over $\Omega$. Its local counterpart $\mathcal{M}_{\rm loc}(\Omega)$ is the space of Radon measures which are locally finite in $\Omega$.
	
	\medskip
	
	For a fixed $k>0$, we use the truncation functions $T_{k}:\R\to\R$ and $G_{k}:\R\to\R$ defined by
	\begin{align}\label{trunc}
		T_k(s):=&\max (-k,\min (s,k))\ \ \text{\rm and} \ \ G_k(s):=s- T_k(s).
	\end{align}
	Moreover we define
	\begin{align}\label{Vdelta}
		\displaystyle
		V_{\delta}(s):=
		\begin{cases}
			1 \ \ &  0\le s\le \delta, \\
			\displaystyle\frac{2\delta-s}{\delta} \ \ &\delta <s< 2\delta, \\
			0 \ \ &s\ge 2\delta.
		\end{cases}
	\end{align}

	Finally, for a Banach space $X$ we denote by $C_b^0(X)$  the space of bounded and continuous real functions on $X$.

	If no otherwise specified, we denote by $C$ several positive constants whose value may change from line to line and, sometimes, on the same line. These values will only depend on the data but they will never depend on the indexes of the sequences we will gradually introduce. Let us explicitly mention that we will not relabel an extracted compact subsequence.

	For simplicity's sake, and if there is no ambiguity,  we will often use the following notation:
	$$
	\int_\Omega f:=\int_\Omega f(x)\ dx.
	$$
	
	\subsection{Basics on $BV$ spaces} 
	The Banach space of bounded variation functions on $\Omega$ is defined as:
	$$BV(\Omega):=\{ u\in L^1(\Omega) : Du \in \mathcal{M}(\Omega)^N \}\,,$$ 
	endowed with the norm  
	$$\displaystyle ||u||_{BV(\Omega)}=\int_{\partial\Omega}
	|u|\, d\mathcal H^{N-1}+ \int_\Omega|Du|\,,$$
	where $|Du|$ denotes the total variation of the measure $D u$. 
	With $L_u$ we mean the set of Lebesgue points of a function $u$, with $S_u :=\Omega\setminus L_u$ and  with $J_u$ the  jump set. Let recall that any function $u\in BV(\Omega)$ can be identified with its precise representative $u^*$ which is the Lebesgue representative in $L_u$ while $u^*=\frac{u^+ +u^-}{2}$ in $J_u$ where $u^+,u^-$ denote the approximate limits of $u$. Moreover it can be shown that 
	$\mathcal{H}^{N-1}(S_u\setminus J_u)=0$ and that $u^*$ is well defined and finite $\mathcal{H}^{N-1}$-a.e. 
	
	\medskip
	
	Let us highlight that, once saying that $D^j u=0$, we understand that $\mathcal{H}^{N-1}( J_u)=0$, or, in other terms, that $Du=\tilde{D}u$ where $\tilde{D} u$ is the absolutely continuous part of $D u$ with respect to the Lebesgue measure. In this case we will also denote by $u$ instead of $u^*$ the  precise representative of $u$ as no ambiguity is possible when integrating against a measure which is absolutely continuous with respect to $\mathcal{H}^{N-1}$.

	\subsection{The Anzellotti-Chen-Frid theory} In order to be self-contained we  summarize the $L^\infty$-divergence-measure vector fields theory due to \cite{An} and \cite{CF}. We denote by 
	$$\DM(\Omega):=\{ z\in L^\infty(\Omega)^N : \operatorname{div}z \in \mathcal{M}(\Omega) \},$$
	and by $\DM_{\rm loc}(\Omega)$ its local version, namely the space of bounded vector field $z$ with $\operatorname{div}z \in \mathcal{M}_{\rm loc}(\Omega)$.
	We first recall that if $z\in \DM(\Omega)$ then $\operatorname{div}z $ is an absolutely continuous measure with respect to $\mathcal H^{N-1}$. 
	\\In \cite{An} the following distribution $(z,Dv): C^1_c(\Omega)\to \mathbb{R}$ is introduced: 
	\begin{equation}\label{dist1}
		\langle(z,Dv),\varphi\rangle:=-\int_\Omega v^*\varphi\operatorname{div}z-\int_\Omega
		vz\cdot\nabla\varphi,\quad \varphi\in C_c^1(\Omega),
	\end{equation}
	in order to define a generalized pairing between vector fields in  $\DM(\Omega)$ and derivatives of $BV$ functions. 
	In \cite{MST2} and \cite{C}, in fact,  the authors prove that $(z, Dv)$ is well defined if $z\in \DM(\Omega)$ and $v\in BV(\Omega)\cap L^\infty(\Omega)$ since one can show that $v^*\in L^\infty(\Omega,\operatorname{div}z)$. Moreover in \cite{dgs} the authors show that \eqref{dist1} is well posed if $z\in \DM_{\rm loc}(\Omega)$ and $v\in BV_{\rm loc}(\Omega)\cap L^1_{\rm loc}(\Omega, \operatorname{div}z)$ and it holds that
	\begin{equation*}\label{finitetotal}
		|\langle   (z, Dv), \varphi\rangle| \le ||\varphi||_{L^{\infty}(U) } ||z||_{L^\infty(U)^N} \int_{U} |Dv|\,,
	\end{equation*}
	for all open sets $U \subset\subset \Omega$ and for all $\varphi\in C_c^1(U)$. Moreover one has
	\begin{equation}\label{finitetotal1}
		\left| \int_B (z, Dv) \right|  \le  \int_B \left|(z, Dv)\right| \le  ||z||_{L^\infty(U)^N} \int_{B} |Dv|\,,
	\end{equation}
	for all Borel sets $B$ and for all open sets $U$ such that $B\subset U \subset \Omega$.

	\medskip

	Observe that,  if $z\in \DM_{\rm loc}(\Omega)$ and $w\in BV_{\rm loc}(\Omega)\cap L^\infty(\Omega)$, then
	\begin{equation}\label{27}
		\diver (wz)=(z,Dw) + w^*\diver z\,,
	\end{equation}
	so that $wz \in \DM_{\rm loc}(\Omega)$.

	We recall that in \cite{An} it is proved that every $z \in \mathcal{DM}^{\infty}(\Omega)$ possesses a weak trace on $\partial \Omega$ of its normal component which is denoted by
	$[z, \nu]$, where $\nu(x)$ is the outward normal unit vector defined for $\mathcal H^{N-1}$-almost every $x\in\partial\Omega$. Moreover, it holds that 
	\begin{equation}\label{des1}
		||[z,\nu]||_{L^\infty(\partial\Omega)}\le ||z||_{L^\infty(\Omega)^N},
	\end{equation}
	and  also that,  if $z \in \mathcal{DM}^{\infty}(\Omega)$ and $v\in BV(\Omega)\cap L^\infty(\Omega)$, then
	\begin{equation*}\label{des2}
		v[z,\nu]=[vz,\nu],
	\end{equation*}
	(see \cite{C}).\\
	Finally  the following Green formula holds (see \cite{dgs}).
	\begin{lemma}\label{poiu}
		Let $z \in \mathcal{DM}_{\rm{loc}}^{\infty}(\Omega)$ and set $\mu=\Div z$. Let $v\in BV(\Omega)\cap L^\infty(\Omega)$ be such that $v^*\in L^1(\Omega,\mu)$.
		Then $vz\in \mathcal{DM}^{\infty}(\Omega)$ and the following  holds: 
		\begin{equation}\label{green}
			\int_{\Omega} v^* \, d\mu + \int_{\Omega} (z, Dv) =
			\int_{\partial \Omega} [vz, \nu] \ d\mathcal H^{N-1}\,.
		\end{equation}
	\end{lemma}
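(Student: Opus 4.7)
The strategy is to apply the Leibniz-type identity \eqref{27} to compute $\diver(vz)$ explicitly, check that it defines a finite Radon measure on $\Omega$ (so that $vz\in\mathcal{DM}^\infty(\Omega)$), and then invoke the standard Anzellotti divergence theorem for the field $vz$ tested against the constant function $\varphi\equiv 1$; this will automatically produce the boundary integral in the required form.

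For the first step, since $z\in L^\infty(\Omega)^N$ and $v\in L^\infty(\Omega)$, certainly $vz\in L^\infty(\Omega)^N$, and identity \eqref{27}, valid a priori in $\mathcal{M}_{\rm loc}(\Omega)$, gives
\begin{equation*}
\diver(vz)=(z,Dv)+v^*\mu.
\end{equation*}
To promote the right-hand side to an element of $\mathcal{M}(\Omega)$, I would use estimate \eqref{finitetotal1} along an exhaustion $U_n\subset\subset\Omega$ of $\Omega$ to obtain the global bound
\begin{equation*}
|(z,Dv)|(\Omega)\le ||z||_{L^\infty(\Omega)^N}\int_\Omega |Dv|<\infty,
\end{equation*}
which is finite since $v\in BV(\Omega)$. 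The remaining summand $v^*\mu$ is a finite measure on $\Omega$ precisely by the hypothesis $v^*\in L^1(\Omega,\mu)$. Hence $\diver(vz)\in\mathcal{M}(\Omega)$ and $vz\in\mathcal{DM}^\infty(\Omega)$.

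Once this membership is established, the normal trace $[vz,\nu]\in L^\infty(\partial\Omega)$ is well defined according to \cite{An} and bounded as in \eqref{des1}. The Anzellotti--Chen--Frid divergence theorem for $\mathcal{DM}^\infty(\Omega)$ fields, applied to $vz$ with $\varphi\equiv 1$, yields
\begin{equation*}
\int_\Omega d(\diver(vz))=\int_{\partial\Omega}[vz,\nu]\,d\mathcal H^{N-1},
\end{equation*}
and substituting the expression for $\diver(vz)$ produced above gives exactly \eqref{green}. The main subtlety, in my view, is promoting the pairing $(z,Dv)$ from its local definition to a genuine finite Radon measure on $\Omega$: this hinges simultaneously on the global $L^\infty$-bound on $z$, the global finiteness of $|Dv|$ and the local estimate \eqref{finitetotal1}. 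All other parts are a rather direct application of the Anzellotti pairing machinery recalled above.
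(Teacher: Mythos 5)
The paper does not give a proof of this lemma at all: it states it as a known result and refers to \cite{dgs}, so there is no in-paper argument to compare against. That said, your reconstruction is correct, and it is essentially the standard proof for Green formulas of this kind in the Anzellotti framework (and is in line with what \cite{dgs} does). Let me flag the one place where care is needed, and which you handled correctly. Since $z\in\DM^\infty_{\rm loc}(\Omega)$ means $z$ is \emph{globally} bounded (only $\operatorname{div}z$ is merely locally finite), the quantity $\|z\|_{L^\infty(\Omega)^N}$ is indeed finite, and the local estimate \eqref{finitetotal1} applied along an exhaustion $U_n\subset\subset\Omega$ does promote $(z,Dv)$ to a finite Radon measure on all of $\Omega$ because $|Dv|(\Omega)<\infty$. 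Together with the hypothesis $v^*\in L^1(\Omega,|\mu|)$, which makes $v^*\mu$ a finite measure, the Leibniz identity \eqref{27} then yields $\operatorname{div}(vz)\in\mathcal{M}(\Omega)$, hence $vz\in\DM^\infty(\Omega)$. Applying the Anzellotti Gauss--Green formula for $\DM^\infty$ fields to $vz$ with test function $\varphi\equiv 1$ (admissible since $\Omega$ is bounded, so constants lie in $W^{1,1}(\Omega)\cap L^\infty(\Omega)$; note $\nabla\varphi=0$ kills the interior pairing term) gives
\begin{equation*}
\int_\Omega d\bigl(\operatorname{div}(vz)\bigr)=\int_{\partial\Omega}[vz,\nu]\,d\mathcal H^{N-1},
\end{equation*}
and substituting $\operatorname{div}(vz)=(z,Dv)+v^*\mu$ is exactly \eqref{green}. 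This is sound; the only thing I would make more explicit is the step where you pass from the estimate on compactly contained $U_n$ to the global total-variation bound, since that is precisely where the hypothesis $v\in BV(\Omega)$ (rather than merely $BV_{\rm loc}$) and the global bound on $z$ both enter simultaneously.
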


	Analogously to \eqref{des1}, it can be proved (see \cite[Proposition 2.7]{dgs}) that, for $z\in \DM_{\rm{loc}}(\Omega)$ such that the product $vz\in \DM(\Omega)$ for some $v\in BV(\Omega)\cap L^\infty(\Omega)$,
	\begin{equation}\label{des3}|[vz,\nu]|\le |v_{\res {\partial\Omega}}|\,\|z\|_{L^\infty(\Omega)^N}\,\quad\mathcal H^{N-1}\hbox{-a.e. on }\partial\Omega.\end{equation}

	When dealing with compositions with nonlinear functions it is sometimes useful to define  a slightly  "different" pairing measure as follows (see for instance \cite{ms}):  let $\beta:\re\to \re$ be a locally Lipschitz  function and $v \in BV_{\rm loc}(\Omega)$, then  we define 
	\begin{equation*}\displaystyle \beta(v)^{\#} :=\begin{cases} \displaystyle \frac{1}{v^{+} -v^-}\int_{v^-}^{v^+} \beta(s) \ ds  & \text{if}\ \ x\in J_v,\\
			\beta(v) & {\text{otherwise}.}
			\ \end{cases}\label{tilde} \end{equation*}
	
	Observe that  	 $\beta(v)^{\#}$	turns out to coincide with  $\beta(v)^{*}$ on the jump set if and only if $\beta(s)=s$. 
	As for \eqref{dist1}, if $z \in \mathcal{DM}_{\rm{loc}}^{\infty}(\Omega)$ and $v\in BV_{\rm loc} (\Omega)$ is such that $\beta(v)\in BV_{\rm loc}(\Omega)\cap L^{\infty}_{\rm loc}(\Omega)$, it is possible to define the measure $(z,D\beta(v)^{\#})$ by 
	\begin{equation}\label{dist1d}
		\langle(z,D\beta(v)^{\#}),\varphi\rangle:=-\int_\Omega \beta(v)^{\#}\varphi\operatorname{div}z-\int_\Omega
		\beta(v)z\cdot\nabla\varphi,\quad \varphi\in C_c^1(\Omega).
	\end{equation}

	By \cite[Lemma $2.5$]{ms}, this new  pairing $(z, D\beta(v)^{\#})$ is a well defined measure absolutely continuous with respect to  $\mathcal{H}^{N-1}$,  and moreover
	\begin{equation}\label{finitetotal1d}
		\left| \int_B (z, D\beta (v)^{\#}) \right|  \le  \int_B \left|(z, D\beta (v)^{\#})\right| \le  ||z||_{L^\infty(U)^N} \int_{B}  |D\beta(v)|,
	\end{equation}
	for all Borel sets $B$ and for all open sets $U$ such that $B\subset U \subset \Omega$.
	
	\medskip 
	
	In what follows we will use the classical chain rule formula for functions in $BV$ (\cite[Theorem 3.99]{afp}).
	
	\begin{lemma}\label{chainrule}
		Let $u\in BV(\Omega)$  and let $\Psi: \re\to \re$ be a Lipschitz function.  Then $v=\Psi(u) \in BV(\Omega)$ and
		\begin{equation}\label{cr}
			Dv = \Psi'(u)^{\#} Du. 
		\end{equation}
		
		In particular, if  $D^j u=0$, then  $\tilde{D}v= \Psi'(u)\tilde{D}u$. 
	\end{lemma}
	\bk 
	\subsection{A general result on the jump part of a $BV$ function}
	
	We show  a general result showing that a function  $w\in BV_{\rm loc}(\Omega)$ satisfying an inequality involving its gradient  has no jump part. The proof is a suitable re-adaptation  of an idea in \cite{gmp}.

	\begin{lemma}\label{lemma derivata salto nulla}
		Let $\alpha$ and $\beta$ be two locally Lipschitz increasing  functions on $\mathbb{R}$. Let $z\in \DM_{\rm{loc}}(\Omega)$ such that $\|z\|_{L^{\infty}(\Omega)^N}\leq 1$, $w,\alpha(w)\in BV_{\rm loc}(\Omega)$, $
		\beta(w)\in BV_{\rm loc}(\Omega)\cap L^{\infty}_{\rm loc}(\Omega)$, and $\lambda \in L^1_{\rm loc}(\Omega)$. Moreover assume that 
		\begin{equation}\label{241}
			-\operatorname{div}z+|D \alpha(w)| \le  \lambda 
		\end{equation}
		and
		\begin{equation}\label{242}
			(z, D \beta(w)^{\#}) = |D \beta(w)|\,,
		\end{equation}
		as measures. Then $D^{j}w=0$. 
	\end{lemma}
	\begin{proof}
		Since $\beta(w) \in BV_{\rm{loc}}(\Omega)$, by  \cite[Theorem 3.78]{afp}, $S_{\beta(w)}$ is a (locally) countably $\mathcal{H}^{N-1}-$rectifiable set and then there exist regular hypersurfaces $\xi_{i}$ such that $\mathcal{H}^{N-1}\left( S_{\beta(w)} \setminus \bigcup_{i=1}^{\infty} \xi_{i} \right)=0$.
		
		By monotonicity of $\beta$, the proof  follows  once we prove that $|D\beta( w)|(\xi_{i})=0$ for any $i \in \mathbb{N}$.
		
		First observe that by \cite[Proposition $3.4$]{ACM2} one has  that
		\begin{equation}\label{nuova1}
			\operatorname{div}z = [z,\nu]^+- [z,\nu]^- \ \text{on} \ \xi_{i},
		\end{equation}
		where $[z,\nu]^+$ and $[z,\nu]^-$ are the traces of the normal components of $z$ over  $\xi_{i}$ defined as in \cite[Definition $3.3$]{ACM2}. 
		
		
		\medskip 
		Hence,  using  \cite[Corollary $3.2$]{cdc}, one has that
		\begin{equation}\begin{aligned}\label{nuova2}
				(z,D \beta(w)^{\#}) &= - \beta(w)^{\#}\operatorname{div}z + \operatorname{div}(\beta(w)z) \\ &= 
				{[z,\nu]^+  (\beta(w)^+ - \beta(w)^{\#}) + [z,\nu]^-}(\beta(w)^{\#}-\beta(w)^-) \le |\beta(w)^+-\beta(w)^-| \ \text{on} \ \xi_{i}, 
		\end{aligned}	\end{equation}
		with the equality sign if and only if 	$[z,\nu]^+ =[z,\nu]^- =\sgn(\beta(w)^+-\beta(w)^-) $. 
		
		Now, as 
		\begin{equation*}
			|D\beta(w)|= |\beta(w)^+-\beta(w)^-| \ \text{on} \ \xi_{i},
		\end{equation*}
		from \eqref{242} we then get  the equality sign in  \eqref{nuova2}; so that 
		\begin{equation}\label{nuova3}
			[z,\nu]^+= [z,\nu]^- =\sgn (\beta(w)^+-\beta(w)^-) \ \text{on} \ \xi_{i}.
		\end{equation}
		Observe that both $\alpha(w)$ and  $\beta(w)$ share the same jump set so that   we gather together  \eqref{241}, \eqref{nuova1} and \eqref{nuova3} to obtain  that 
		$$|\alpha(w)^+ - \alpha (w)^-|=|D\alpha (w)| = 0 \ \text{on} \ \xi_{i},$$
		which allows us to conclude that  
		$$|D \alpha(w)| = 0 \ \text{on} \ \xi_{i},$$
		and, as   $\alpha$ is strictly monotone, that  finally  $D^{j}w=0$. 
	\end{proof}
	\bk

	\section{The case $p>1$ for general monotone operators}
	\label{sec:p>1} 
	
	As we mentioned, in this section we set, not only as a preparatory tool,  the theory of existence and weak regularity of solutions for problems as in \eqref{problemone} for Leray-Lions nonlinear operators as leading terms.
	
	\medskip 
	
	Let $\Omega \subset \mathbb{R}^{N}$ $(N \ge 2)$ is an open and bounded set (no further regularity is needed here),  and let  $a: \Omega \times \mathbb{R} \times \mathbb{R}^{N} \to \mathbb{R}^{N} $ be a Carath\'eodory function {satisfying} the classical  Leray-Lions assumptions, i.e. there exist $\alpha,\beta>0$ and  $1<p<N$ such that, for almost every $x \in \Omega$, for every $s \in \mathbb{R}$ and every $\xi, \xi' \in \mathbb{R}^{N}$:
	\begin{equation}\label{cara1}
		a(x,s,\xi) \cdot   \xi \ge \alpha |\xi|^{p},
	\end{equation}
	\begin{equation}\label{cara2}
		|a(x,s, \xi)|  \le \beta \left( b(x)+|s|^{p-1} + |\xi|^{p-1} \right), 
	\end{equation}
	\begin{equation}\label{cara3}
		\left(a(x,s,\xi) - a(x,s, \xi')\right)  \cdot (\xi - \xi') > 0;
	\end{equation}
	let us recall that, by \eqref{cara1}, one has $a(x,s,0)=0$, for any $s\in \re$ and a.e. $x\in\Omega$. 
	
	Consider the following boundary value problem
	\begin{equation}\label{problema}
		\begin{cases}
			-\operatorname{div}(a(x,u,\nabla u)) + g(u) | \nabla u|^{p} = h(u)f & \text{in $\Omega$,} \\
			u \ge 0 & \text{in $\Omega$,} \\
			u=0 & \text{on $\partial \Omega$,} \
		\end{cases}
	\end{equation}
	where $g: [0,\infty)\to [0,\infty)$ is a
	continuous function such that 
	\begin{equation}\label{g infinito}
		\liminf_{s \to \infty} g(s)>0. 
	\end{equation}
	Moreover $h :[0,\infty)\to [0,\infty]$ is a  continuous function such that $h(0)>0$,  
	\begin{equation}\label{h vicino 0}
		\exists \ 0 \le \gamma \le 1, c_{1},s_{1}>0: h(s) \le \frac{c_{1}}{s^{\gamma}} ~~~\text{for all $s \le s_{1}$}, 
	\end{equation}
	and
	\begin{equation}\label{h infinito}
		h\in C^0_b([\delta,\infty))\ \ \ \forall \delta>0. 
	\end{equation}
	
	Let us stress that the classical case $g,h \equiv 1$ is  covered by the above assumptions as $\gamma$ could be zero and $g$ does not necessarily satisfy $g(0)=0$.
	
	\medskip
	
	As for the datum, we assume that $f\in L^{1}(\Omega)$ is nonnegative. 
	
	\medskip 
	
	Let us introduce the notion of distributional solution for problem $(\ref{problema})$.
	\begin{defin}\label{soluzione p}
		Let $1<p<N$ then a nonnegative $u\in W^{1,p}_0(\Omega)$ is a distributional solution to \eqref{problema} if $a(x,u,\nabla u)\in L^1_{\rm loc}(\Omega)^N$, $g(u)|\nabla u|^{p}, h(u)f \in L^{1}_{\rm{loc}}(\Omega)$ and if it holds
		\begin{equation}\label{def_p>1}
			\int_{\Omega} a(x,u,\nabla u) \cdot \nabla \varphi + \int_{\Omega} g(u) |\nabla u|^{p} \varphi = \int_{\Omega} h(u) f \varphi, ~~  \forall \varphi \in C^{1}_{c}(\Omega).
		\end{equation}
	\end{defin}
	\begin{remark}
		It is worth mentioning that all the terms appearing in the weak formulation of Definition \ref{soluzione p} are well defined.
		In particular, as the vector field $a$ satisfies \eqref{cara1}-\eqref{cara3}, then $a(x,u,\nabla u)\in L^{p'}(\Omega)^N$  since $u\in W^{1,p}_0(\Omega)$, where $p'=\frac{p}{p-1}$ is the standard H\"older conjugate exponent of $p$. 
	\end{remark}

	Now we are ready to state the main result of this section. 
	\begin{theorem}\label{teorema p>1}
		Let $a$ satisfy  \eqref{cara1}-\eqref{cara3} with $1<p<N$. Let $g$ satisfy \eqref{g infinito} and let $h$ satisfy  \eqref{h vicino 0}-\eqref{h infinito}. Finally let $f \in L^{1}(\Omega)$ be nonnegative. Then there exists a solution $u$ to \eqref{problema} in the sense of Definition \ref{soluzione p} such that $g(u)|\nabla u|^p\in L^1(\Omega)$.
	\end{theorem}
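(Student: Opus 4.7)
The strategy is by approximation. I would set $h_n(s) := h(s + 1/n)$, which lies in $C_b^0([0,\infty))$ thanks to \eqref{h infinito}, and $f_n := T_n(f) \in L^\infty(\Omega)$ with $f_n \to f$ in $L^1(\Omega)$. For each $n$ the classical theory of Leray--Lions operators with gradient absorption satisfying the sign condition (cf.\ \cite{bbm}) produces a nonnegative bounded solution $u_n \in W^{1,p}_0(\Omega) \cap L^\infty(\Omega)$ of
\begin{equation*}
-\operatorname{div}(a(x, u_n, \nabla u_n)) + g(u_n)|\nabla u_n|^p = h_n(u_n) f_n,
\end{equation*}
with nonnegativity enforced by extending $g, h$ as constants on $(-\infty, 0)$ and testing with $-u_n^{-}$.

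The core a priori estimate relies on a single exponential test function. Positivity and continuity of $g$ on $[0,\infty)$ together with \eqref{g infinito} yield a uniform lower bound $g \geq c_0 > 0$. Setting $\lambda := c_0/\alpha$ and testing with $\phi_\lambda(u_n) := 1 - e^{-\lambda u_n}$, the coercivity \eqref{cara1} combined with the pointwise identity $\alpha\lambda\, e^{-\lambda s} + c_0(1 - e^{-\lambda s}) \equiv c_0$ (which motivates the choice of $\lambda$) produces
\begin{equation*}
c_0 \int_\Omega |\nabla u_n|^p \leq \int_\Omega h_n(u_n) f_n (1 - e^{-\lambda u_n}) \leq C \|f\|_{L^1(\Omega)}.
\end{equation*}
The last inequality crucially exploits that $h_n(s)(1 - e^{-\lambda s})$ is uniformly bounded on $[0,\infty)$: near zero this is a consequence of \eqref{h vicino 0} with $\gamma \leq 1$, giving $h_n(s)(1 - e^{-\lambda s}) \leq \lambda c_1 s_1^{1-\gamma}$, while away from zero it follows from \eqref{h infinito}. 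Splitting $\Omega = \{u_n < 1\} \cup \{u_n \geq 1\}$, where on the first set $g(u_n) \leq \max_{[0,1]} g$ and on the second $1 - e^{-\lambda u_n} \geq 1 - e^{-\lambda}$, the same identity also yields a uniform $L^1(\Omega)$ bound on $g(u_n)|\nabla u_n|^p$.

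Passing to a subsequence, $u_n \rightharpoonup u$ in $W^{1,p}_0(\Omega)$, strongly in $L^q(\Omega)$ for every $q < p^*$, and a.e.\ in $\Omega$. The a.e.\ convergence of the gradients, needed to pass to the limit in the principal part and in the gradient absorption term, follows from the standard Boccardo--Murat scheme (testing with $T_k(u_n - u)$) since the right-hand side of the equation is equi-bounded in $L^1(\Omega)$. For the reaction term $h_n(u_n)f_n$, a comparison argument gives a uniform lower bound $u_n \geq c_\omega > 0$ on compact subsets $\omega \subset\subset \Omega$ where $f$ is nontrivial; hence $h_n(u_n)$ is uniformly bounded on $\omega$ and Vitali's theorem applies. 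Finally, the gradient absorption term passes to the limit by a combination of Fatou's lemma and an equation-based argument, yielding $u$ as a distributional solution with $g(u)|\nabla u|^p \in L^1(\Omega)$. The main obstacle is the interplay between the strong convergence of gradients and the passage to the limit in the singular factor $h_n(u_n)$ across the (possibly non-negligible) set where $u$ could vanish.
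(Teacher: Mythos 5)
Your exponential device is elegant, but two steps rest on assumptions the theorem does not provide, and the two genuine obstacles of the passage to the limit are not actually resolved.

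First, the uniform lower bound $g\ge c_0>0$. Theorem \ref{teorema p>1} assumes only that $g:[0,\infty)\to[0,\infty)$ is continuous with $\liminf_{s\to\infty}g(s)>0$; in Section \ref{sec:p>1} (unlike Section \ref{sec:p=1}) strict positivity of $g$ is \emph{not} assumed, so $g$ may vanish on a bounded set (e.g.\ $g(s)=\max(1-s,0)+\max(s-2,0)$ satisfies \eqref{g infinito} but has $g(3/2)=0$). The identity $\alpha\lambda e^{-\lambda s}+g(s)(1-e^{-\lambda s})\ge c_0$ then fails, and with it the $W^{1,p}_0$ estimate. The paper circumvents this by testing with $T_{\overline k}(u_n)$, where $\overline k$ is chosen from \eqref{g infinito} so that $g\ge\eta>0$ on $[\overline k,\infty)$: on $\{u_n\le\overline k\}$ coercivity of the principal part carries the estimate, while on $\{u_n>\overline k\}$ the absorption term does. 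If you do want to keep the exponential test function, you would need to build it from $\min\{g,\eta\}$ restricted appropriately, which essentially reproduces the truncation argument.

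Second, and more seriously, the claim that a comparison argument gives $u_n\ge c_\omega>0$ on compact subsets where $f$ is nontrivial does not go through under the present hypotheses. The gradient absorption enters the equation with a sign that forces $-\Delta_p u_n\le h_n(u_n)f_n$, so a direct comparison with the absorption-free problem yields an \emph{upper}, not a lower, bound. The usual Boccardo--Orsina route (monotonicity of $u_n$ in $n$) would require monotonicity of $h_n$, but here $h$ is a general continuous function; even if you arrange $h_n\nearrow h$, monotonicity of the sequence $u_n$ is destroyed by the nonlinear first-order term, since the difference of the absorption terms for two solutions is not sign-definite. The paper avoids any lower bound entirely: it derives $h(u)f\in L^1_{\rm loc}(\Omega)$ directly from the local $L^1$ estimate and Fatou, deduces from this that $\{u=0\}\subset\{f=0\}$ up to a null set, and then controls the contribution near $\{u\le\delta\}$ by testing with $V_\delta(u_n)\varphi$ (see \eqref{Vdelta}), sending first $n\to\infty$ and then $\delta\to 0$. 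Similarly, the passage to the limit of $g(u_n)|\nabla u_n|^p$ is carried out via equi-integrability obtained from the test functions $S_{\eta,k}(u_n)$, not merely Fatou. Finally, to justify passing to the limit in the principal part and in the absorption term the paper proves the \emph{strong} $W^{1,p}_0$-convergence of $T_k(u_n)$ via the Bensoussan--Boccardo--Murat test function $\varphi_\rho(s)=se^{\rho s^2}$, which is a stronger statement than the a.e.\ convergence of gradients that Boccardo--Murat alone would deliver; this strong convergence is what makes the Vitali argument for $g_n(u_n)|\nabla u_n|^p$ close. These are precisely the points your sketch defers to ``an equation-based argument,'' and they are where the real work of the proof lies.
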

	
	\subsection{Approximation scheme} 
	
	In order to prove Theorem $\ref{teorema p>1}$ we work by approximation. We will show the existence of a nonnegative  solution for the following 
	\begin{equation}\label{problema troncato}
		\begin{cases}
			-\operatorname{div}(a(x,u_{n}, \nabla u_{n})) + g_{n}(u_{n})|\nabla u_{n}|^{p} = h_{n}(u_{n})f_{n} & \text{in $\Omega$,} \\
			u_{n}=0 & \text{on $\partial \Omega$,}
		\end{cases}
	\end{equation}
	where $$g_{n}(s):=\begin{cases}
		\min\{n, g(0)\} & \text{if $s \le 0$, } \\
		T_{n}(g(s)) & \text{if $s>0$,}
	\end{cases}$$ $f_{n}:=T_{n}(f)$ and $h_{n}:=T_{n}(h)$ with $n>0$ ($T_n(s)$ is defined in \eqref{trunc}). 
	
	\medskip 
	The proof is standard and  we will only  sketch it for the sake of completeness. \bk 
	\begin{lemma}\label{lem_esistenzaun}
		Let $a$ satisfy \eqref{cara1}-\eqref{cara3} with $1<p<N$, {then} there exists a nonnegative $u_{n} \in W^{1,p}_{0}(\Omega) \cap L^{\infty}(\Omega)$ which satisfies $$\int_{\Omega} a(x,u_{n}, \nabla u_{n}) \cdot \nabla \varphi + \int_{\Omega} g_{n}(u_{n}) |\nabla u_{n}|^{p} \varphi = \int_{\Omega}  h_{n}(u_{n})f_{n} \varphi, \ \ \forall\varphi \in W^{1,p}_{0}(\Omega) \cap L^{\infty}(\Omega).$$ 
	\end{lemma}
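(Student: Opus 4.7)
The plan is a classical application of the Schauder fixed point theorem. Fix $n \in \mathbb{N}$ and define a map $S : L^p(\Omega) \to L^p(\Omega)$ as follows: given $v \in L^p(\Omega)$, let $w := S(v)$ be the unique solution in $W^{1,p}_0(\Omega) \cap L^\infty(\Omega)$ of the frozen problem
\begin{equation*}
\begin{cases}
-\operatorname{div}(a(x, v, \nabla w)) + g_n(v) |\nabla w|^{p-2} \nabla w \cdot \nabla w = h_n(v) f_n & \text{in } \Omega, \\
w = 0 & \text{on } \partial\Omega.
\end{cases}
\end{equation*}
Since $g_n, h_n, f_n$ are all truncated at level $n$, the right-hand side lies in $L^\infty(\Omega)$, the coefficient of the natural-growth gradient term is bounded, and $g_n(v)|\nabla w|^p$ has the absorption sign. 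Existence and uniqueness of $w \in W^{1,p}_0(\Omega) \cap L^\infty(\Omega)$ then follow from the classical theory for elliptic problems with gradient terms of natural growth having the correct sign.

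I would then establish the a priori bounds needed to apply Schauder. First, nonnegativity $w \ge 0$ follows by testing with $-w^- \in W^{1,p}_0(\Omega)$: since $a(x, v, 0) = 0$, the principal part reduces to $\int_\Omega a(x,v,-\nabla w^-)\cdot(-\nabla w^-) \ge \alpha \|\nabla w^-\|_p^p$ by \eqref{cara1}, the absorption term has nonpositive contribution on $\{w<0\}$, and the right-hand side tested against $-w^-$ is nonpositive, forcing $w^- \equiv 0$. Next, testing with $w$ itself and using \eqref{cara1}, together with the positivity of the absorption term (now that $w \ge 0$), yields
\begin{equation*}
\alpha \int_\Omega |\nabla w|^p \le \int_\Omega h_n(v) f_n\, w \le n^2 \|w\|_{L^1(\Omega)},
\end{equation*}
whence, by Poincar\'e, $\|w\|_{W^{1,p}_0(\Omega)} \le C(n)$. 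An $L^\infty$ bound $\|w\|_{L^\infty(\Omega)} \le M(n)$ independent of $v$ follows by the Stampacchia method, taking $G_k(w)$ as test function and exploiting the $L^\infty$ bound on $h_n(v) f_n$.

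With these bounds, $S$ maps a fixed closed ball of $L^p(\Omega)$ into itself. Standard stability arguments for the frozen problem, based on uniqueness and on the almost everywhere convergence of gradients for equations with natural-growth terms, show that $S$ is continuous; moreover its image is bounded in $W^{1,p}_0(\Omega)$ and thus relatively compact in $L^p(\Omega)$ by Rellich--Kondrachov. Schauder's theorem then produces a fixed point $u_n \in W^{1,p}_0(\Omega) \cap L^\infty(\Omega)$, nonnegative, that satisfies the desired identity for every test function in $W^{1,p}_0(\Omega) \cap L^\infty(\Omega)$ by a density argument extending the weak formulation of the frozen problem.

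The main obstacle is the resolution of the frozen problem: for given $v$, existence and uniqueness of a bounded $W^{1,p}_0$ solution for an equation with an absorption term of natural growth in the gradient is delicate because the zeroth-order gradient term is not monotone; one must rely precisely on the sign condition $g_n(v)|\nabla w|^p \ge 0$, on the boundedness of the right-hand side, and on a careful choice of truncated test functions (typically of exponential type) to absorb the natural-growth contribution.
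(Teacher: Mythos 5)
Your overall strategy (Schauder's fixed point theorem for the truncated problem) matches the paper's, but there are two substantive differences, and one of them is a genuine gap.

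First, a structural difference: you freeze $v$ in the principal part, in the gradient coefficient, and in the right-hand side, considering $-\operatorname{div}(a(x,v,\nabla w)) + g_n(v)|\nabla w|^p = h_n(v)f_n$. The paper only freezes the right-hand side, keeping $a(x,w,\nabla w)$ and $g_n(w)|\nabla w|^p$ in the auxiliary problem (existence then comes directly from Boccardo--Murat--Puel). Your version shifts the burden: the auxiliary problem is simpler (frozen coefficients), but the continuity of $S$ now requires stability of the solution with respect to perturbations of the operator $a(x,v_k,\cdot)$ and of the coefficient $g_n(v_k)$, not only of the source term. This is not automatic, and your appeal to ``standard stability arguments based on uniqueness'' is exactly the point at issue: uniqueness for natural-growth problems is not available off the shelf even with frozen coefficients (the lower-order term is not monotone), and the paper sidesteps this by never invoking uniqueness -- it proves continuity of $S$ directly via the exponential test functions $\varphi_\rho(s)=s e^{\rho s^2}$ and the Browder--Minty lemma of Boccardo--Murat--Puel.

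Second, and more concretely, your nonnegativity argument fails. Testing with $\varphi=-w^-$ gives
\begin{equation*}
\alpha\int_\Omega|\nabla w^-|^p \le \int_{\{w<0\}} g_n(v)\,|\nabla w|^p\, w^- \le n\,\|w^-\|_{L^\infty(\Omega)}\int_\Omega|\nabla w^-|^p,
\end{equation*}
after moving the gradient term to the right-hand side (it has the wrong sign here because $g_n$ is defined to be positive on $(-\infty,0]$, so the sign condition $g_n(s)s\ge 0$ is violated for $s<0$). This inequality does not force $w^-\equiv 0$ unless you already know $\|w^-\|_\infty < \alpha/n$, which you don't. One must use the weighted test function $\varphi=-w^- e^{-tw}$, as the paper does: the extra term it generates, $t\,a(x,\cdot,\nabla w)\cdot\nabla w\,w^- e^{-tw}$, is coercive and, for $t>n/\alpha$, dominates the bad contribution of the natural-growth term, yielding
\begin{equation*}
\alpha\int_\Omega|\nabla w^-|^p e^{-tw} + \int_\Omega|\nabla w|^p w^- e^{-tw}\bigl(\alpha t - g_n(\cdot)\bigr)\le 0,
\end{equation*}
whence $w^-\equiv 0$. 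This exponential trick is essential, not optional, and its absence is the main defect of your proposal.
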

	
	\begin{proof}[Sketch of the proof]
		For simplicity we assume that,  for any $v \in L^{p}(\Omega)$, problem 
		\begin{equation}\label{problema apprissimante}
			\begin{cases}
				-\operatorname{div}(a(x,w, \nabla w)) + g_{n}(w)|\nabla w|^{p} = h_{n}(|v|)f_{n} & \text{in $\Omega$,} \\
				w=0 & \text{on $\partial \Omega$,}
			\end{cases}
		\end{equation}
		admits a unique bounded solution. Existence is guaranteed  from \cite[Theorem 1]{bmp} in full generality. Concerning  uniqueness, it holds under quite natural additional assumptions on $a$ and $g$ (see for instance \cite{seg,abm,lpr,art} and references therein). \bk 
		\smallskip	
		
		We set
		$$S: L^{p}(\Omega) \to L^{p}(\Omega),$$ the map that, for any $v \in L^{p}(\Omega)$, gives the weak solution $w$ to \eqref{problema apprissimante}. 
		In particular $w \in W^{1,p}_{0}(\Omega)\cap L^{\infty}(\Omega)$ satisfies
		\begin{equation}\label{debole_schauder}
			\int_{\Omega} a(x,w, \nabla w) \cdot \nabla \varphi + \int_{\Omega} g_{n}(w)|\nabla w|^{p} \varphi = \int_{\Omega} h_{n}(|v|)f_{n} \varphi, \ \ \forall \varphi \in W^{1,p}_{0}(\Omega) \cap L^{\infty}(\Omega).
		\end{equation}
		
		We claim that $w$ is  nonnegative; indeed, one can fix in \eqref{debole_schauder} $\varphi=-w^{-} e^{-t w}$, for some  $t>0$ to be chosen later, and, here,  {$w^-\ge 0$} denotes  the negative part of $w$. This yields to
		\begin{equation*}
			\begin{aligned}
				& -\int_{\Omega} a(x,w, \nabla w) \cdot \nabla w^{-} e^{-t w}  + \int_{\Omega} t a(x, w, \nabla w) \cdot \nabla w  w^{-} e^{-t w} - \int_{\Omega} g_{n}(w) |\nabla w|^{p} w^{-} e^{- t w} 
				\\
				&= - \int_{\Omega} h_{n}(|v|) f_{n} w^{-} e^{- t w} \le 0,
			\end{aligned}
		\end{equation*}
		which, recalling \eqref{cara1}, implies 
		$$\alpha \int_{\Omega} | \nabla w^{-}| ^{p} e^{- t w} + \int_{\Omega} |\nabla w|^{p} w^{-} e^{- t w} (\alpha t - g_{n}(w)) \le 0. $$ 
		Hence it is sufficient requiring $t>\frac{n}{\alpha}$ in order to deduce that $w\ge 0$ almost everywhere in $\Omega$.
		
		\medskip
		
		Now we show that the map $S$ has an invariant ball, it is continuous and relatively compact in $L^p (\Omega)$, so that the Schauder fixed point Theorem can be applied.
		
		\medskip
		
		Let us fix $\varphi=w$ in \eqref{debole_schauder} yielding to
		\begin{equation}\label{disuguaglianza utile}
			\alpha \int_{\Omega} |\nabla w|^{p} \stackrel{\eqref{cara1}}{\le} \int_{\Omega} a(x,w, \nabla w) \cdot \nabla w + \int_{\Omega} g_{n}(w)|\nabla w|^{p} w = \int_{\Omega} h_{n}(|v|)f_{n}w \le n^{2} |\Omega|^{\frac{1}{p'}} \|w\|_{L^{p}(\Omega)},
		\end{equation}
		after an application of the H\"older inequality on the right-hand.
		Using the Poincar\'e inequality on the left-hand, one deduces  
		\begin{equation*}
			\| w\|_{L^{p}(\Omega)} \le \left(\frac{c^p(p,\Omega) n^{2}}{\alpha}\right)^{\frac{1}{p-1}}|\Omega|^{\frac{1}{p}},
		\end{equation*}
		where $c(p,\Omega)$ is the Poincar\'e  constant; observe that 
		these estimates are independent of $v$. Thus, we can affirm that the ball of radius $\left(\frac{c^p(p,\Omega) n^{2}}{\alpha}\right)^{\frac{1}{p-1}}|\Omega|^{\frac{1}{p}}$ is invariant for $S$. 
		
		\medskip
		
		Moreover from $(\ref{disuguaglianza utile})$ one deduces that 
		\begin{equation}\label{G nella palla}
			\|w\|_{W^{1,p}_{0}(\Omega)}\le C,
		\end{equation} 
		where $C$ is independent of $v$. This is sufficient to deduce that $S(L^{p}(\Omega))$ is relatively compact in $L^{p}(\Omega)$ by Rellich-Kondrachov Theorem.
		
		\medskip
		
		It is left to show that $S$ is continuous in $L^{p}(\Omega)$. Let consider  $v_{k}\in L^{p}(\Omega)$ converging to $v \in L^{p}(\Omega)$ as $k\to \infty$.
		
		If we denote  by  $w_{k}=S(v_{k})$ then $w_k$ is bounded in $W^{1,p}_{0}(\Omega)$ with respect to $k$ thanks to $(\ref{G nella palla})$. Moreover, exists $w \in W^{1,p}_{0}(\Omega) $ to which $w_k$, up to subsequences, converges weakly in $W^{1,p}_0(\Omega)$. Now, as $h_{n}(|v_{k}|)f_{n} \le n^{2}$  by the  classical Stampacchia's argument (see for instance \cite[Lemma 2]{bmp}), one has that $w_k\le C$ almost everywhere in $\Omega$ where $C$ is independent of $k$, i.e.  $w\in L^\infty(\Omega)$.

		\medskip
		
		Now we need to show that $w=S(v)$; i.e. we need to pass to the limit with respect to $k$ in the following formulation
		\begin{equation}\label{bis}
			\int_{\Omega} a(x,w_{k}, \nabla w_{k}) \cdot\nabla \phi + \int_{\Omega}g_{n}(w_{k})|\nabla w_{k}|^{p} \phi = \int_{\Omega}h_{n}(|v_{k}|)f_{n} \phi, \ \ \forall \phi \in W^{1,p}_{0}(\Omega) \cap L^{\infty}(\Omega).
		\end{equation}
		For the right-hand of \eqref{bis} one can  apply the Lebesgue Theorem since $h_{n}(|v_{k}|)f_{n}\varphi\le n^2\varphi \in L^1(\Omega)$. Now observe that, once one proves that $w_k$ converges strongly to $w$ as $k\to\infty$ in $W^{1,p}_0(\Omega)$, one can safely  pass to the limit on  the left-hand of \eqref{bis}. Indeed it will be sufficient recall that $g_n$ is bounded and that $a$ satisfies \eqref{cara2}. 
		
		\medskip
		
		The proof of the strong convergence in $W^{1,p}_0(\Omega)$ of $w_k$ is also quite  classical under the above assumptions. Anyway, for the sake of completeness, we will sketch it.

		Let consider $\varphi_{\rho}(s):= s e^{\rho s^{2}}$ $(\rho>0)$ which satisfies \begin{equation}\label{fi ro}
			\eta\varphi_{\rho}'(s)-\mu|\varphi_{\rho}(s)| \ge \frac{\eta}{2}, ~~~ \forall s \in \mathbb{R},  ~~~ \forall \eta,\mu >0, ~~~ \forall \rho \ge \frac{\mu^{2}}{4\eta^{2}}.
		\end{equation}
		We fix $\phi=\varphi_{\rho}(u_{k})$ in $(\ref{bis})$ where $u_{k}:=w_{k}-w \in W^{1,p}_{0}(\Omega) \cap L^{\infty}(\Omega)$, then we have
		\begin{equation}\label{debole}
			\int_{\Omega}a(x, w_{k}, \nabla w_{k}) \cdot \nabla u_{k} \varphi_{\rho}'(u_{k})  + \int_{\Omega} g_{n}(w_{k})|\nabla w_{k}|^{p} \varphi_{\rho}(u_{k})= \int_{\Omega} h_{n}(|v_{k}|)f_{n}\varphi_{\rho}(u_{k}).
		\end{equation}
		By \eqref{cara1} we find
		$$-\int_{\Omega} g_{n}(w_{k}) |\nabla w_{k}|^{p} \varphi_{\rho}(u_{k}) \le n 
		\int_{\Omega} |\nabla w_{k}|^{p} |\varphi_{\rho}(u_{k})| \le \frac{n}{\alpha} \int_{\Omega} 
		a(x,w_{k}, \nabla w_{k}) \cdot \nabla w_{k} |\varphi_{\rho}(u_{k})|,$$
		which gives 
		\begin{equation}\label{maggiorazione}
			\begin{aligned}	
				-\int_{\Omega} g_{n}(w_{k}) |\nabla w_{k}|^{p} \varphi_{\rho}(u_{k}) &\le \frac{n}{\alpha} \int_{\Omega} 
				a(x,w_{k}, \nabla w_{k}) \cdot \nabla u_{k} |\varphi_{\rho}(u_{k})| 
				\\
				&+ \frac{n}{\alpha} \int_{\Omega} a(x, w_{k}, \nabla w_{k}) \cdot \nabla w |\varphi_{\rho}(u_{k})|.
			\end{aligned}	
		\end{equation}
		Observe now that, since $a(x, w_{k},\nabla w_{k})$ is bounded in $L^{p'}(\Omega)^{N}$ and since $\nabla w | \varphi_{\rho}(u_{k})|$ strongly converges to zero in $L^{p}(\Omega)^{N}$ as $k\to\infty$, one has
		\begin{equation}\label{limite p.45}
			\lim_{k \to \infty} \int_{\Omega} a(x, w_{k}, \nabla w_{k}) \cdot \nabla w |\varphi_{\rho}(u_{k})| =0. 
		\end{equation}
		Moreover, an application of the Lebesgue Theorem gives that
		\begin{equation}\label{limite2}
			\lim_{k \to \infty} \int_{\Omega} h_{n}(|v_{k}|)f_{n}\varphi_{\rho}(u_{k})=0.
		\end{equation}
		Gathering  together $(\ref{debole})$, $(\ref{maggiorazione})$, $(\ref{limite p.45})$,  and \eqref{limite2} we have
		$$\int_{\Omega}a(x, w_{k}, \nabla w_{k}) \cdot \nabla u_{k} \varphi_{\rho}'(u_{k}) \le  \frac{n}{\alpha} \int_{\Omega} 
		a(x,w_{k}, \nabla w_{k}) \cdot \nabla u_{k} |\varphi_{\rho}(u_{k})| + \omega_{k},   $$
		
		where  $\omega_{k}$ denotes an infinitesimal quantity as $k\to\infty$. 
		
		Thus, using \eqref{fi ro} fixing $\rho=\frac{n^2}{4\alpha^2}$, one has 
		\begin{equation*}\label{1}
			\int_{\Omega} a(x, w_{k}, \nabla w_{k}) \cdot \nabla u_{k}  \le  \omega_{k},
		\end{equation*} 
		which implies
		\begin{equation}\label{2}
			\int_{\Omega} \left(a(x, w_{k}, \nabla w_{k}) -a(x, w_{k}, \nabla w)\right) \cdot   \nabla (w_{k}-w)  \le -\int_{\Omega} a(x,w_{k}, \nabla w) \cdot  \nabla  u_k +\omega_{k}.
		\end{equation}
		It follows from \eqref{cara2} and from the fact that $w_k$ converges to $w$ in $L^p(\Omega)$ as $k\to \infty$ that $a(x, w_{k}, \nabla w)$ strongly converges in $L^{p'}(\Omega)^{N}$ to $a(x, w, \nabla w)$ as $k\to\infty$. Moreover, as $u_k$ weakly converges to $0$ in $W^{1,p}_0(\Omega)$ as $k\to\infty$, from \eqref{2} and \eqref{cara3} one has that
		$$\lim_{k \to \infty} \int_{\Omega} (a(x, w_{k}, \nabla w_{k}) - a(x, w_{k}, \nabla w)) \cdot \nabla (w_{k}-w)=0,$$  
		which allows to apply Lemma 5 of \cite{bmp} in order to deduce that $$w_{k}\to w ~~~ \text{ strongly in $W^{1,p}_{0}(\Omega)$}.$$ This is sufficient to conclude that $w=S(v)$, i.e. $S$ is continuous. 
		
		\medskip
		
		We finally apply the Schauder fixed point theorem to conclude that $S$ has a nonnegative fixed point $u_{n} \in W^{1,p}_{0}(\Omega) \cap L^{\infty}(\Omega)$ that is a solution to $(\ref{problema apprissimante})$.  This concludes the proof in the case in which \eqref{problema apprissimante} admits a unique solution. 
		
		\medskip
		In the general case one can reason by performing a standard  approximation argument. In fact,  thanks to Leray-Lions  theory one can find  bounded nonnegative solutions $u_{n,m}\in W^{1,p}_0(\Omega)$ of
		$$
		\begin{cases}
			\displaystyle	-\operatorname{div}(a(x,u_{n,m}, \nabla u_{n,m})) + g_{n}(u_{n,m})\frac{|\nabla u_{n,m}|^{p}}{\displaystyle 1+\frac{|\nabla u_{n,m}|^{p}}{m}} = h_{n}(u_{n,m})f_{n} & \text{in $\Omega$,} \\
			u_{n,m}=0 & \text{on $\partial \Omega$,}
		\end{cases}
		$$
		along with suitable a priori estimates which are uniform with respect to $m$ (see for instance \cite{bmp}),  and  pass to the limit  as $m\to\infty$ obtaining  a bounded nonnegative solution $u_n\in W^{1,p}_0(\Omega)$ of \eqref{problema troncato}. 
		
		\bk	
	\end{proof}

	\subsection{A priori estimates}
	In this section we collect all the estimates from which one could  derive the existence of a limit function for $u$ of $u_n$.

	\begin{lemma}\label{lemma esistenza u_p}  Let $a$ satisfy \eqref{cara1}-\eqref{cara3} with $1<p<N$, let $u_{n}$ be a nonnegative solution to problem \eqref{problema troncato}. Then $u_n$ is bounded in $W^{1,p}_0(\Omega)$ with respect to $n$. Moreover $g_{n}(u_{n})| \nabla u_{n}|^{p}$ is bounded in $L^1(\Omega)$ and $h_{n}(u_{n})f_{n}$ is bounded in $L^1_{\rm loc}(\Omega)$ with respect to $n$.	
	\end{lemma}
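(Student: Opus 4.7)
The plan is to derive all three uniform bounds from a single test function, namely $T_k(u_n)$ for a conveniently chosen $k\ge s_1$, inserted in the weak formulation of Lemma \ref{lem_esistenzaun}. Using \eqref{cara1} together with the nonnegativity of $g_n(u_n)|\nabla u_n|^p T_k(u_n)$ (which equals $k\, g_n(u_n)|\nabla u_n|^p$ on $\{u_n\ge k\}$) this gives
\begin{equation*}
\alpha \int_\Omega |\nabla T_k(u_n)|^p + k\int_{\{u_n\ge k\}} g_n(u_n)|\nabla u_n|^p \le \int_\Omega h_n(u_n)f_n T_k(u_n).
\end{equation*}
The right-hand side is then controlled by splitting $\Omega$ into $\{u_n\le s_1\}$, where \eqref{h vicino 0} and $\gamma\le 1$ give $h_n(u_n)T_k(u_n)\le h(u_n) u_n\le c_1 s_1^{1-\gamma}$, and $\{u_n>s_1\}$, where \eqref{h infinito} yields $h_n(u_n)T_k(u_n)\le k\,\|h\|_{C_b^0([s_1,\infty))}$. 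In both regions the right-hand side is dominated by a multiple of $\|f\|_{L^1(\Omega)}$, uniformly in $n$.

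To promote the previous inequality to a $W^{1,p}_0(\Omega)$ bound on $u_n$ I would exploit assumption \eqref{g infinito}: together with the continuity and strict positivity of $g$, it gives $c_k:=\inf_{[k,\infty)} g>0$, and since $g_n=T_n\circ g$ one has $g_n(u_n)\ge c_k$ on $\{u_n\ge k\}$ for every $n\ge c_k$. Plugged into the estimate on $\{u_n\ge k\}$ this yields $\int_{\{u_n\ge k\}}|\nabla u_n|^p\le C$, which combined with $\int_{\{u_n<k\}}|\nabla u_n|^p = \int_\Omega |\nabla T_k(u_n)|^p\le C$ provides the first assertion for $n$ large. The excluded finitely many indices are automatically handled by Lemma \ref{lem_esistenzaun}.

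The $L^1(\Omega)$ bound on $g_n(u_n)|\nabla u_n|^p$ splits analogously: the contribution on $\{u_n\ge k\}$ is the one already obtained, while on $\{u_n\le k\}$ the inequality $g_n\le g$ and the continuity of $g$ on $[0,k]$ give $g_n(u_n)\le M_k:=\max_{[0,k]}g$ uniformly in $n$, so that $\int_{\{u_n\le k\}} g_n(u_n)|\nabla u_n|^p \le M_k \int_\Omega |\nabla T_k(u_n)|^p\le C$. The local $L^1$ bound on $h_n(u_n)f_n$ is finally a direct consequence of the equation itself: for every nonnegative $\varphi\in C_c^1(\Omega)$,
\begin{equation*}
\int_\Omega h_n(u_n)f_n\varphi = \int_\Omega a(x,u_n,\nabla u_n)\cdot\nabla\varphi + \int_\Omega g_n(u_n)|\nabla u_n|^p\varphi,
\end{equation*}
and the right-hand side is controlled by the $W^{1,p}_0(\Omega)$ bound on $u_n$ (via \eqref{cara2}, which makes $a(\cdot,u_n,\nabla u_n)$ bounded in $L^{p'}(\Omega)^N$) and by the $L^1(\Omega)$ bound on the absorption term just obtained.

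I expect the only delicate point to be the choice of $k$: it has to be large enough that $h$ is bounded on $[k,\infty)$—so that the $L^1$ datum is only multiplied by a constant—while the subsequent absorption on $\{u_n\ge k\}$ requires $g$ to be bounded below on the same interval, which is precisely what \eqref{g infinito} guarantees. Everything else—the treatment of the few small indices $n<c_k$ and the uniform integrability of $a(x,u_n,\nabla u_n)$ in $L^{p'}$—is a routine Leray--Lions exercise.
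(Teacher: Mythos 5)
Your proof is correct and follows essentially the same strategy as the paper: truncation test functions to control both the energy and the gradient absorption term (you merge the paper's two uses of $T_{\overline k}(u_n)$ and $T_1(u_n)$ into a single $T_k(u_n)$, which works just as well), and the weak formulation tested against nonnegative $\varphi\in C^1_c(\Omega)$ for the local $L^1$ bound on $h_n(u_n)f_n$. One small imprecision: in Section~\ref{sec:p>1} the paper only assumes $g\ge 0$ on $[0,\infty)$ together with \eqref{g infinito}, so the bound $\inf_{[k,\infty)}g>0$ does not follow from ``strict positivity of $g$'' (which is not assumed here) but solely from \eqref{g infinito} after taking $k$ large enough—this is exactly what your ``conveniently chosen $k\ge s_1$'' must encode, so the argument is sound once the phrasing is tightened.
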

	
	\begin{proof}
		
		We start proving that $u_{n}$ is bounded in $W^{1,p}_{0}(\Omega)$.
		Let us first observe that, from \eqref{g infinito}, there exists $\overline{k}>0$ such that for all $s \in [\overline{k}, \infty)$ one has $g(s)\geq\eta>0$ for some $\eta>0$. Then we choose $T_{\overline{k}}(u_{n})$ as test function in the weak formulation of \eqref{problema troncato} in order to deduce that
		$$\int_{\Omega} a(x,u_{n},\nabla u_{n}) \cdot \nabla T_{\overline{k}}(u_{n}) + \int_{\Omega} g_{n}(u_{n}) | \nabla u_{n}|^{p} T_{\overline{k}}(u_{n}) = \int_{\Omega} h_{n}(u_{n}) f_{n} T_{\overline{k}}(u_{n}),$$ which implies that
		$$\begin{aligned}
			&\int_{\{u_{n} \le \overline{k}\}} a(x,u_{n}, \nabla u_{n}) \cdot \nabla u_{n} + \int_{\{u_{n} \le \overline{k}\}} g_{n}(u_{n})| \nabla u_{n}|^{p}u_{n} + \overline{k} \int_{\{u_{n}>\overline{k}\}} g_{n}(u_{n}) | \nabla u_{n}|^{p} \\ &\le \int_{\{u_{n} \le s_{1}\}} h_{n}(u_{n}) f_{n} u_{n} + \overline{k} \int_{\{u_{n} >s_{1} \}} h_{n}(u_{n}) f_{n}.
		\end{aligned} $$
		Without loosing generality let assume $n\ge \eta$ and, using that $g_{n}(u_{n})> \eta$ on $\{u_n>\overline{k}\}$, \eqref{cara1} and $(\ref{h vicino 0})$, one yields to 
		$$\alpha \int_{\{u_{n} \le \overline{k}\}} | \nabla u_{n}|^{p} + \overline{k} \eta \int_{\{u_{n} >\overline{k}\}} | \nabla u_{n}|^{p} \le c_{1}\int_{\{u_{n} \le s_{1}\}}  u_{n}^{1-\gamma} f_{n} + \overline{k}\sup_{s \in [s_{1}, \infty)} h(s) \int_{\{u_{n} >s_{1}\}} f_{n}.$$
		From the previous it follows
		$$\min\{\alpha,\overline{k}\eta\} \int_{\Omega} | \nabla u_{n}|^{p} \le \left(c_{1}s_{1}^{1-\gamma}+ \overline{k}\sup_{s \in [s_{1}, \infty)} h(s) \right) \|f\|_{L^{1}(\Omega)},$$
		namely $u_n$ is bounded in $W^{1,p}_0(\Omega)$ with respect to $n$ since $\gamma\le 1$ and thanks to \eqref{h infinito}.
		
		\medskip
		
		Now we focus on proving the $L^1$-estimate on $g_n(u_n)|\nabla u_n|^p$ in $n$. Let us take $T_{1}(u_{n})$ as test function in the weak formulation of \eqref{problema troncato}, obtaining 
		$$\int_{\Omega} a(x, u_{n}, \nabla u_{n}) \cdot \nabla T_{1}(u_{n}) + \int_{\Omega} g_{n}(u_{n}) | \nabla u_{n}|^{p} T_{1}(u_{n}) = \int_{\Omega}  h_{n}(u_{n}) f_{n} T_{1}(u_{n}),$$ 
		which, recalling \eqref{cara1} and \eqref{h vicino 0}, gives	
		\begin{equation}\label{stimag}
			\begin{aligned}
				\int_{\{u_{n} \ge 1\}} g_{n}(u_{n}) | \nabla u_{n}|^{p} &\le c_{1}\int_{\{u_{n} \le s_{1}\}} u_{n}^{1-\gamma} f_{n} + \sup_{s \in [s_{1}, \infty)}h(s) \int_{\{u_{n}>s_{1}\}} f_{n} 
				\\
				&\le \left(c_{1}s_{1}^{1-\gamma}+\sup_{s \in [s_{1}, \infty)}h(s)\right) \|f\|_{L^{1}(\Omega)}.
			\end{aligned}	
		\end{equation}
		Moreover one can observe that
		\begin{equation}\label{gn limitata uniformemente}
			\int_{\{u_{n} < 1 \}}  g_{n}(u_{n}) | \nabla u_{n}|^{p} \le \max_{s\in[0,1]}g(s) \int_\Omega |\nabla u_n|^p \le C,
		\end{equation}
		where $C$ does not depend on $n$ since $u_n$ is bounded in $W^{1,p}_0(\Omega)$ with respect to $n$. Then it follows from \eqref{stimag} and \eqref{gn limitata uniformemente} that $g_{n}(u_{n})| \nabla u_{n}|^{p}$ is bounded in $L^1(\Omega)$ with respect to $n$.

		\medskip
		
		We finally show that $h_{n}(u_{n})f_{n}$ is bounded in $L^1_{\rm loc}(\Omega)$ with respect to $n$.
		
		Let consider $0\le \varphi \in C^{1}_{c}(\Omega)$ as a test function in the weak formulation of \eqref{problema troncato}, obtaining
		\begin{equation*}
			\begin{aligned}
				\int_{\Omega}  h_{n}(u_{n})f_{n} \varphi = \int_{\Omega} a(x, u_{n}, \nabla u_{n}) \cdot\nabla \varphi + \int_{\Omega} g_{n}(u_{n}) |\nabla u_{n}|^p \varphi\le C, 
			\end{aligned}
		\end{equation*}
		where $C$ does not depend on $n$. Indeed we have already proven that $g_n(u_n)|\nabla u_{n}|^p$ is bounded in $L^1(\Omega)$. Moreover, as $u_n$ is bounded in $W^{1,p}_0(\Omega)$ and thanks to \eqref{cara2}, it is easy to check that $a(x,u_n,\nabla u_n)$ is bounded in $L^{p'}(\Omega)^N$ with respect to $n$.
	\end{proof}

	In the next lemma we show the existence of a limit function for $u_n$ with respect to $n$ and we show that any truncation of $u_n$ strongly converges in $W^{1,p}_0(\Omega)$ as $n\to\infty$.
	
	\begin{lemma}\label{lem_esistenzau_p>1}
		Let $a$ satisfy \eqref{cara1}-\eqref{cara3} with $1<p<N$, let $u_{n}$ be a nonnegative solution to problem \eqref{problema troncato}. Then there exists $u\in W^{1,p}_0(\Omega)$ to which $u_n$, up to subsequences, converges as $n\to\infty$ almost everywhere in $\Omega$. Moreover $T_k(u_n)$ converges, up to subsequences, strongly in $W^{1,p}_0(\Omega)$ to $T_k(u)$ as $n\to\infty$ for any $k>0$. Finally $g(u)|\nabla u|^p \in L^1(\Omega)$ and $h(u)f\in L^1_{\rm loc}(\Omega)$.
	\end{lemma}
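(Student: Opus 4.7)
The plan is to first extract a subsequence converging weakly and almost everywhere from the $W^{1,p}_0(\Omega)$ bound of Lemma~\ref{lemma esistenza u_p}, then to upgrade to the strong convergence of truncations via the exponential test function trick already used in Lemma~\ref{lem_esistenzaun}, and finally to deduce the integrability statements by almost everywhere convergence of the gradients together with Fatou's lemma.

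First, since $u_n$ is bounded in $W^{1,p}_0(\Omega)$ (Lemma~\ref{lemma esistenza u_p}) and $1<p<N$, the reflexivity of $W^{1,p}_0(\Omega)$ together with the Rellich–Kondrachov compact embedding into $L^p(\Omega)$ yields, up to a subsequence, $u_n\rightharpoonup u$ in $W^{1,p}_0(\Omega)$ and $u_n\to u$ a.e.\ in $\Omega$; the pointwise limit $u$ is nonnegative since each $u_n$ is.

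To prove the strong convergence of the truncations, fix $k>0$ and set $\psi_n:=T_k(u_n)-T_k(u)\in W^{1,p}_0(\Omega)\cap L^\infty(\Omega)$, so that $\|\psi_n\|_\infty\le 2k$, $\psi_n\rightharpoonup 0$ in $W^{1,p}_0(\Omega)$, and $\psi_n\to 0$ a.e. Exactly as in the proof of Lemma~\ref{lem_esistenzaun}, I would test the weak formulation of \eqref{problema troncato} against $\varphi_\rho(\psi_n)$, where $\varphi_\rho(s)=se^{\rho s^2}$ and $\rho$ is to be chosen. Splitting the diffusion integral on $\{u_n\le k\}$ and $\{u_n>k\}$, one isolates the monotonicity quantity
\begin{equation*}
\int_\Omega \bigl(a(x,T_k(u_n),\nabla T_k(u_n))-a(x,T_k(u_n),\nabla T_k(u))\bigr)\cdot \nabla \psi_n\,\varphi_\rho'(\psi_n),
\end{equation*}
while the pieces involving $\nabla T_k(u)\varphi_\rho'(\psi_n)$ and $a(x,T_k(u_n),\nabla T_k(u))$ vanish in the limit by the weak convergence $\psi_n\rightharpoonup 0$ together with the $L^{p'}$-boundedness of $a(x,u_n,\nabla u_n)$ granted by \eqref{cara2}. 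The gradient absorption term is treated by observing that on $\{u_n>k\}$ one has $\psi_n=k-T_k(u)\ge 0$, hence $g_n(u_n)|\nabla u_n|^p\varphi_\rho(\psi_n)\ge 0$, whereas on $\{u_n\le k\}$ one has $g_n(u_n)\le G_k:=\max_{[0,k]}g$ independently of $n$. Choosing $\rho\ge G_k^2/(4\alpha^2)$, inequality \eqref{fi ro} with $\eta=\alpha$, $\mu=G_k$ gives $\alpha\varphi_\rho'(\psi_n)-G_k|\varphi_\rho(\psi_n)|\ge \alpha/2$, which allows to absorb the "bad" contribution of the gradient term into the elliptic one.

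For the right-hand side $\int_\Omega h_n(u_n)f_n\varphi_\rho(\psi_n)$, I would split according to $\{u_n>k\}$, where $h_n(u_n)\le \sup_{[k,\infty)}h<\infty$ by \eqref{h infinito} so that $h_n(u_n)f_n\le C_kf$ and dominated convergence applies to the bounded, a.e.\ vanishing factor $\varphi_\rho(\psi_n)$; and $\{u_n\le k\}$, where $\psi_n=u_n-T_k(u)$ and \eqref{h vicino 0} combined with the $W^{1,p}_0$–Sobolev bound on $u_n$ provides the equi-integrability needed to pass to zero. Putting these pieces together and invoking the monotonicity \eqref{cara3} and Lemma~5 of \cite{bmp}, we conclude $T_k(u_n)\to T_k(u)$ strongly in $W^{1,p}_0(\Omega)$ for every $k>0$.

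Finally, a diagonal argument in $k$ gives $\nabla u_n\to \nabla u$ a.e.\ in $\Omega$, and by continuity of $g,h$ we obtain $g_n(u_n)|\nabla u_n|^p\to g(u)|\nabla u|^p$ and $h_n(u_n)f_n\to h(u)f$ a.e.\ on $\{u>0\}$. Fatou's lemma combined with the $L^1(\Omega)$-bound on $g_n(u_n)|\nabla u_n|^p$ and the $L^1_{\mathrm{loc}}(\Omega)$-bound on $h_n(u_n)f_n$ from Lemma~\ref{lemma esistenza u_p} then yield $g(u)|\nabla u|^p\in L^1(\Omega)$ and $h(u)f\in L^1_{\mathrm{loc}}(\Omega)$. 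The main obstacle is the passage to the limit in $\int h_n(u_n)f_n\varphi_\rho(\psi_n)$ on $\{u_n\le k\}$, where the possibly singular behavior of $h$ near $0$ prevents a naive dominated convergence and requires the fine use of \eqref{h vicino 0}; crucial to the whole scheme is also the fact that $\rho$ can be chosen independently of $n$ thanks to the bound $g_n(u_n)\le G_k$ on the truncation region, in contrast with the argument of Lemma~\ref{lem_esistenzaun}.
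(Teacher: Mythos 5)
Your overall architecture matches the paper's: extract a weakly and a.e.\ convergent subsequence from the bound of Lemma~\ref{lemma esistenza u_p}, test against $\varphi_\rho(T_k(u_n)-T_k(u))$, split the diffusion term on $\{u_n\le k\}$ and $\{u_n>k\}$, keep the sign of the gradient term on $\{u_n>k\}$, bound $g_n(u_n)$ by $\max_{[0,k]}g$ on the truncation region, choose $\rho$ accordingly, invoke Lemma~5 of \cite{bmp}, then upgrade a.e.\ convergence of gradients and apply Fatou. That is all fine and is exactly the route taken in the paper.

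However, the decisive step — passing $\int_\Omega h_n(u_n)f_n\varphi_\rho(\psi_n)\to 0$ — is not correctly argued. Two things go wrong. First, the split at the level $k$ does not isolate the singular region: on $\{u_n\le k\}$ the function $h_n(u_n)$ is still unbounded near $\{u_n=0\}$, so you need a further split at a small threshold $\delta<s_1$ (as the paper does) rather than at $k$. Second, and more seriously, the claimed mechanism on the small set is wrong: ``equi-integrability from the $W^{1,p}_0$ bound on $u_n$'' does not control $h_n(u_n)f_n$, since an upper bound on $u_n$ gives no control of $u_n^{-\gamma}$, and $h_n(u_n)f_n$ is not dominated in $n$. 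What actually makes the term vanish is the pointwise cancellation: on $\{u_n\le\delta\}$ the integrand is $\le 0$ where $\psi_n<0$, and where $\psi_n\ge 0$ one has $0\le\psi_n=u_n-T_k(u)\le u_n$, hence $\psi_n/u_n^\gamma\le u_n^{1-\gamma}\le\delta^{1-\gamma}$; this yields the bound $c_1\delta^{1-\gamma}\int_{\{u_n\le\delta\}}f\,e^{\rho\psi_n^2}$, which after $n\to\infty$ and $\delta\to 0$ tends to $\int_{\{u=0\}}f=0$. For that last equality you must know $\{u=0\}\subset\{f=0\}$, which comes from $h(u)f\in L^1_{\rm loc}(\Omega)$. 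This is precisely why the paper establishes $h(u)f\in L^1_{\rm loc}(\Omega)$ \emph{before} the strong-convergence argument, by Fatou from the $L^1_{\rm loc}$ bound of Lemma~\ref{lemma esistenza u_p}, whereas you defer it to the end. Reordering the two steps and replacing the equi-integrability claim by the explicit $\delta$-cancellation closes the gap.
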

	\begin{proof}
		The existence of a limit function $u$ follows from a standard compactness argument once that Lemma \ref{lemma esistenza u_p} is in force. From the same lemma one also has that $h_n(u_n)f_n$ is bounded in $L^1_{\rm loc}(\Omega)$ with respect to $n$; then an application of the Fatou Lemma as $n\to\infty$ gives that $h(u)f\in L^1_{\rm loc}(\Omega)$.
		
		\medskip
		
		To show the strong convergence of $T_k(u_n)$ in $n$, we re-adapt a classical  idea of \cite{bbm}.  
		
		\medskip
		
		We recall that the function $\varphi_{\rho}(s):= s e^{\rho s^{2}}$ $(\rho>0)$ satisfies \eqref{fi ro} and we define for any $k>0$
		$$w_{n,k}:=T_{k}(u_{n})-T_{k}(u)\,.$$ 
		
		We take $\varphi_{\rho}(w_{n,k})$ as a test function in the weak formulation of \eqref{problema troncato}; one has
		\begin{eqnarray}\label{convtron1}
			\into a(x, u_{n}, \nabla u_{n}) \cdot \nabla w_{n,k} \varphi_{\rho}'(w_{n,k}) + \int_{\Omega} g_{n}(u_{n}) |\nabla u_{n}|^{p} \varphi_{\rho}(w_{n,k}) = \int_{\Omega} h_{n}(u_{n}) f_{n} \varphi_{\rho}(w_{n,k}).
		\end{eqnarray}
		
		We can write the first term on the left-hand of the previous as  
		\begin{equation*}
			\begin{aligned}
				&\int_{\{u_{n}\le k\}} a(x,u_{n}, \nabla u_{n}) \cdot \nabla w_{n,k} \varphi_{\rho}'(w_{n,k}) - \int_{\{u_{n}> k\}} a(x,u_{n}, \nabla u_{n}) \cdot \nabla T_k(u) \varphi_{\rho}'(w_{n,k}) 
				\\
				&\ge \into a(x, T_{k}(u_{n}), \nabla T_{k}(u_{n})) \cdot \nabla w_{n,k} \varphi'_{\rho}(w_{n,k}) \\ & -\beta \int_{\{u_{n}>k\}}  \left( b(x)+|u_{n}|^{p-1} + |\nabla  u_{n}|^{p-1} \right) | \nabla T_{k}(u)| |\varphi_{\rho}'(w_{n,k})|\,, 
			\end{aligned}
		\end{equation*}
		where in the last step we used \eqref{cara2}. 
		Gathering the previous inequality into \eqref{convtron1} one has
		\begin{equation*}
			\begin{aligned}
				&\int_{\Omega} a(x, T_{k}(u_{n}), \nabla T_{k}(u_{n})) \cdot \nabla (T_{k}(u_{n})-T_{k}(u)) \varphi'_{\rho}(w_{n,k})+\int_{\Omega}g_{n}(u_{n}) |\nabla u_{n}|^{p} \varphi_{\rho}(w_{n,k}) \le \\ &\int_{\Omega}h_{n}(u_{n}) f_{n} \varphi_{\rho}(w_{n,k})+\beta \int_{\{u_{n}>k\}}  \left( b(x)+|u_{n}|^{p-1} + |\nabla  u_{n}|^{p-1} \right) | \nabla T_{k}(u)| |\varphi'_{\rho}(w_{n,k})|.
			\end{aligned}
		\end{equation*}
		Since $|\nabla T_{k}(u)| \chi_{\{u_{n}>k\}} \to 0$ strongly in $L^{p}(\Omega)$ as $n\to\infty$ while $\beta \left( b(x)+|u_{n}|^{p-1} + |\nabla  u_{n}|^{p-1} \right) |\varphi'_{\rho}(w_{n,k})|$ is bounded in $L^{p'}(\Omega)$ since $u_n$ is bounded in $W^{1,p}_0(\Omega)$ with respect to  $n$, then the last term in previous inequality tends to zero as $n$ tends to infinity.
		Hence, 
		\begin{equation}\label{convtron2}
			\begin{aligned}
				&\int_{\Omega} a(x, T_{k}(u_{n}), \nabla T_{k}(u_{n})) \cdot \nabla (T_{k}(u_{n})-T_{k}(u)) \varphi'_{\rho}(w_{n,k})
				\\
				&\le -\int_{\Omega} g_{n}(u_{n}) | \nabla u_{n}|^{p} \varphi_{\rho}(w_{n,k}) + \int_{\Omega}h_{n}(u_{n}) f_{n} \varphi_{\rho}(w_{n,k})+ \omega_n,	
			\end{aligned}
		\end{equation}
		where,  again, $\omega_n$ is a quantity that tends to $0$ as $n\to\infty$.
		
		Now observe that for the first term on the right-hand of \eqref{convtron2}, one has 
		\begin{eqnarray*}
			-\int_{\Omega} g_{n}(u_{n}) |\nabla u_{n}|^{p} \varphi_{\rho}(w_{n,k}) \le 
			\int_{\{u_{n}\le k\}} g_{n}(u_{n}) |\nabla u_{n}|^{p} | \varphi_{\rho}(w_{n,k})|,
		\end{eqnarray*}
		since $\varphi_{\rho}(w_{n,k}) \ge 0$ on $\{u_{n} > k\}$. 
		Furthermore, using \eqref{cara1}, one deduces 
		\begin{eqnarray*}
			-\int_{\Omega} g_{n}(u_{n}) |\nabla u_{n}|^{p} \varphi_{\rho}(w_{n,k}) \le \displaystyle \frac{\max_{s\in[0,k]}g(s)}{\alpha}\int_{\Omega} a(x, T_{k}(u_{n}), \nabla T_{k}(u_{n})) \cdot \nabla T_{k}(u_{n}) | \varphi_{\rho}(w_{n,k})|\,.
		\end{eqnarray*}
		Since $a(x,T_{k}(u_{n}), \nabla T_{k}(u_{n}))$ is bounded in $L^{p'}(\Omega)^{N}$ while $\nabla T_{k}(u) | \varphi_{\rho}(w_{n,k})|$ strongly converges to zero in $L^{p}(\Omega)^{N}$ as $n\to\infty$, one yields to 
		$$\lim_{n \to \infty} \int_{\Omega}a(x,T_{k}(u_{n}), \nabla T_{k}(u_{n})) \cdot \nabla T_{k}(u) | \varphi_{\rho}(w_{n,k})|=0,$$
		which implies 
		$$ 
		\begin{aligned}
			&-\int_{\Omega} g_{n}(u_{n}) |\nabla u_{n}|^{p} \varphi_{\rho}(w_{n,k}) 
			\\
			&\le\frac{\max_{s\in[0,k]}g(s)}{\alpha}\int_{\Omega} a(x, T_{k}(u_{n}), \nabla T_{k}(u_{n})) \cdot \nabla (T_{k}(u_{n})- T_{k}(u)) | \varphi_{\rho}(w_{n,k})|+\omega_n.	
		\end{aligned}
		$$
		Then gathering the previous into \eqref{convtron2} one has
		\begin{equation*}
			\begin{aligned}
				&\int_{\Omega}  a(x, T_{k}(u_{n}), \nabla T_{k}(u_{n})) \cdot \nabla (T_{k}(u_{n})-T_{k}(u)) \varphi_{\rho}'(w_{n,k}) \\ &\le \frac{\max_{s\in[0,k]}g(s)}{\alpha}\int_{\Omega} a(x, T_{k}(u_{n}), \nabla T_{k}(u_{n})) \cdot \nabla (T_{k}(u_{n})- T_{k}(u)) | \varphi_{\rho}(w_{n,k})|
				\\
				&+  \int_{\Omega}  h_{n}(u_{n}) f_{n} \varphi_{\rho}(w_{n,k}) + \omega_n,
			\end{aligned}
		\end{equation*}
		
		which, using $(\ref{fi ro})$ with $\rho=\left(\frac{\max_{s\in[0,k]}g(s)}{2\alpha}\right)^2$, implies that 
		\begin{equation}\label{convtron3}
			\begin{aligned}
				\int_{\Omega} a(x,T_{k}(u_{n}), \nabla T_{k}(u_{n})) \cdot \nabla(T_{k}(u_{n})-T_{k}(u)) \le 2\int_{\Omega}  h_{n}(u_{n})f_{n} \varphi_{\rho}(w_{n,k}) +\omega_n.
			\end{aligned}
		\end{equation}
		Now observe that it follows from \eqref{cara2} and from having $u_n$ weakly converging in $W^{1,p}_0(\Omega)$  that 
		$$\lim_{n \to \infty} \int_{\Omega} a(x,T_{k}(u_{n}), \nabla T_{k}(u)) \cdot \nabla(T_{k}(u_{n})-T_{k}(u))=0.$$
		Then by adding and subtracting this quantity  into \eqref{convtron3}, one has 
		\begin{equation*}
			\begin{aligned}
				&\int_{\Omega} (a(x,T_{k}(u_{n}), \nabla T_{k}(u_{n}))-a(x,T_{k}(u_{n}), \nabla T_{k}(u))) \cdot \nabla(T_{k}(u_{n})-T_{k}(u))  \\ &\le  2\int_{\Omega}  h_{n}(u_{n})f_{n} \varphi_{\rho}(w_{n,k})+\omega_n.
			\end{aligned} 
		\end{equation*}
		We claim that  the right-hand of the previous inequality  converges to $0$ as $n\to \infty$, for every fixed $k>0$. If $h(0)<\infty$ then this passage to the limit follows from the convergence in $L^1(\Omega)$ of $f_n$ coupled with the *-weak convergence of $\varphi_{\rho}(w_{n,k})$ to zero in $L^{\infty}(\Omega)$. 
		\medskip 
		
		Hence,  we assume $h(0)=\infty$.
		We fix $0<\delta < s_1$. Then, using \eqref{h vicino 0} one yields to 
		\begin{equation}\label{convtron4bis}
			\int_{\Omega}  h_{n}(u_{n})f_{n}\varphi_{\rho}(w_{n,k}) \le c_{1} \int_{\{u_{n} \le \delta\}}  \frac{f_{n}}{u_{n}^{\gamma}}\varphi_{\rho}(w_{n,k}) + \sup_{s \in [\delta, \infty)} h(s) \int_{\{u_{n}>\delta\}} f_{n} \varphi_{\rho}(w_{n,k}).
		\end{equation}
		The second term on the right-hand of \eqref{convtron4bis}  converges to $0$ as $n\to\infty$ since $f_n$ converges in $L^1(\Omega)$ while $\varphi_\rho(w_{n,k})$ converges *-weakly in $L^\infty(\Omega)$ to $0$ as $n\to\infty$. 
		For the first term of \eqref{convtron4bis} one reason as
		\begin{equation*}\label{convtron5}
			c_{1}\int_{\{u_{n} \le \delta\}}  \frac{f_{n}}{u_{n}^{\gamma}}\varphi_{\rho}(w_{n,k}) \le 2 c_{1}\int_{\{u_{n} \le \delta \}}  \delta^{1- \gamma}f ~ e^{\rho w_{n,k}^{2}}.
		\end{equation*}	
		
		Applying the Lebesgue Theorem  we can say that (here we may assume $\delta\le 1$)
		\begin{equation*}\label{convtron6}
			\lim_{n \to \infty} 2 c_{1} \int_{\{u_{n} \le \delta \}}  \delta^{1- \gamma}f ~ e^{\rho w_{n,k}^{2}} = 2 c_{1}\int_{\{u \le \delta\}}  \delta^{1-\gamma}f \le 2 c_{1}\int_{\{u \le \delta\}}  f,
		\end{equation*}
		since $\gamma\le 1$. We have already shown that $h(u)f \in L^{1}_{\rm{loc}}(\Omega)$. This implies that $\{u=0\} \subset \{ f=0\}$ up to a set of zero Lebesgue measure, which gives
		$$\lim_{\delta \to 0} \int_{\{u \le \delta\}} f = \int_{\{u=0\}} f = 0.$$
		
		This allows to deduce that for every $k>0$
		\begin{equation*}
			\lim_{n \to \infty} \int_{\Omega} (a(x,T_k(u_{n}), \nabla T_k(u_{n})) - a(x,T_k(u_{n}), \nabla T_k(u))) \cdot (\nabla T_{k}(u_{n})- \nabla T_{k}(u))=0,
		\end{equation*}
		which is sufficient to apply \cite[Lemma 5]{bmp}, deducing 
		$$T_{k}(u_{n}) \to T_{k}(u) ~~\text{strongly in $W^{1,p}_{0}(\Omega)$},$$
		for every $k>0$. This also implies that $\nabla u_n$ converges almost everywhere in $\Omega$ to $\nabla u$ as $n\to\infty$. Moreover, as Lemma \ref{lem_esistenzau_p>1} guarantees that $g_n(u_n)|\nabla u_n|^p$ is bounded in $L^1(\Omega)$ with respect to $n$, an application of the Fatou Lemma in $n$ gives that $g(u)|\nabla u|^p \in L^1(\Omega)$. This concludes the proof.
	\end{proof}
	
	\begin{remark}\label{convqo}
		In Lemma \ref{lem_esistenzau_p>1} we have shown $T_k(u_n)$ converges, up to subsequences, strongly in $W^{1,p}_0(\Omega)$ to $T_k(u)$ as $n\to\infty$ for any $k>0$. From this fact we have that $\nabla u_n$ converges almost everywhere in $\Omega$ to $\nabla u$ as $n\to\infty$. This can  be  used to deduce below that $a(x,u_{n}, \nabla u_{n})$ strongly converges to $a(x,u, \nabla u)$ in $L^{p'}(\Omega)^{N}$ as $n\to\infty$ thanks to \eqref{cara2}.
	\end{remark}

	\subsection{Proof of the existence result}
	
	We are now ready to prove the main existence result of this section, i.e. Theorem \ref{teorema p>1}.
	\begin{proof}[Proof of Theorem \ref{teorema p>1}]
		
		Let $u_n$ be a solution to \eqref{problema troncato} whose existence is guaranteed by Lemma \ref{lem_esistenzaun}. Moreover it follows from Lemma \ref{lem_esistenzau_p>1} that there exists $u\in W^{1,p}_{0}(\Omega)$ which is, up to subsequences, the almost everywhere limit in $\Omega$ of $u_n$ as $n\to\infty$. Moreover the same lemma gives that $g(u)|\nabla u|^p \in L^1(\Omega)$ and that $h(u)f \in L^{1}_{\rm{loc}}(\Omega)$.
		
		\medskip

		To conclude the proof, it remains to show that $u$ satisfies \eqref{def_p>1}, namely that one can pass to the limit with respect to $n$: 
		\begin{equation}\label{form_n}
			\int_{\Omega} a(x,u_{n}, \nabla u_{n}) \cdot \nabla \varphi + \int_{\Omega} g_{n}(u_{n}) |\nabla u_{n}|^{p} \varphi = \int_{\Omega}  h_{n}(u_{n}) f_{n} \varphi,
		\end{equation}
		where $\varphi \in C^{1}_{c}(\Omega)$.
		
		\medskip

		It is a consequence of both \eqref{cara2} and Lemma \ref{lemma esistenza u_p} that $a(x,u_{n}, \nabla u_{n})$ strongly converges to $a(x,u, \nabla u)$ in $L^{p'}(\Omega)^{N}$ as $n\to\infty$. This is sufficient to take $n\to\infty$ in the first term of \eqref{form_n} (see also Remark \ref{convqo}).

		\medskip

		For the second term we show the equi-integrability of the sequence $g_{n}(u_{n})| \nabla u_{n}|^{p}$.
		
		To this aim we introduce the following function:
		$$S_{\eta,k}(s):= \begin{cases}
			0 &\text{$s \le k$,}\\
			\frac{s-k}{\eta} & \text{$k<s<k+\eta$,}\\
			1 & \text{$s \ge k + \eta$,}
		\end{cases}$$
		where $k>0$ is a fixed parameter.
		Let us take $S_{\eta,k}(u_{n})$ with $k>0$ as a test function in the weak formulation of \eqref{problema troncato}, yielding to
		$$\int_{\Omega} a(x,u_{n}, \nabla u_{n}) \cdot \nabla u_{n} S_{\eta, k}'(u_{n}) + \int_{\Omega} g_{n}(u_{n}) | \nabla u_{n}|^{p} S_{\eta, k}(u_{n}) = \int_{\Omega}  h_{n}(u_{n}) f_{n
		} S_{\eta, k}(u_{n}),$$
		which, thanks to \eqref{cara1}, implies
		\begin{equation*}
			\int_{\Omega} g_{n}(u_{n}) | \nabla u_{n}|^{p} S_{\eta,k}(u_{n})\le \sup_{s \in [k, \infty)} h(s) \int_{\{u_{n} > k\}} f.
		\end{equation*}
		We take $\eta \to 0$ applying the Fatou Lemma, obtaining 
		\begin{equation*}\label{piccolezza}
			\int_{\{u_{n} > k\}} g_{n}(u_{n}) |\nabla u_{n}|^{p} \le \sup_{s \in [k,  \infty)} h(s) \int_{\{u_{n} > k\}} f.
		\end{equation*}
		Recalling that $T_k(u_n)$ strongly converges in $W^{1,p}_0(\Omega)$ with respect to $n$ and that $g$ is continuous, the former inequality gives the equi-integrability of the sequence $g_{n}(u_{n})| \nabla u_{n}|^{p}$ with respect to $n$. This fact, jointly with 
		$$g_n(u_n)|\nabla u_n|^p \to g(u)|\nabla u|^p \ \ \text{a.e. in } \ \Omega \ \text{as}\ \ \ n\to\infty,$$
		and recalling that  $g(u)|\nabla u|^p\in L^1(\Omega)$, allows to apply the Vitali Theorem to deduce that 
		$$g_{n}(u_{n})| \nabla u_{n}|^{p}\to g(u)|\nabla u|^p \ \ \text{strongly converges in}\ \  L^1(\Omega) \ \text{as}\ \ \ n\to\infty.$$
		
		\medskip

		The proof is then concluded once we pass to the limit in the right-hand side of \eqref{form_n}  
		for any $0\le \varphi \in C^1_c(\Omega)$, as the case of $\varphi$ with general sign will easily follow. If $h(0)<\infty$ the passage to the limit is trivial. Hence, without loss of generality, we assume that $h(0)=\infty$.
		
		\medskip
		
		We choose $\delta > 0$ such that $\delta \not \in \{\eta : |\{u=\eta\}|>0\}$ and we split \eqref{form_n} as follows 
		\begin{equation}\label{convhp>10}
			\int_{\Omega} a(x,u_{n}, \nabla u_{n}) \cdot \nabla \varphi + \int_{\Omega} g_{n}(u_{n}) |\nabla u_{n}|^{p} \varphi =  \int_{\{u_{n} \le \delta\}} h_{n}(u_{n}) f_{n} \varphi + \int_{\{u_{n}> \delta \}} h_{n}(u_{n}) f_{n} \varphi.
		\end{equation}
		We first show that 
		\begin{equation}\label{limite su un piccolo}
			\lim_{\delta \to 0} \limsup_{n \to \infty} \int_{\{u_{n} \le \delta \}} h_{n}(u_{n}) f_{n} \varphi=0.
		\end{equation}
		Let fix $V_{\delta}(u_{n}) \varphi$ ($V_\delta$ is defined in \eqref{Vdelta}) with $0 \le \varphi \in C^{1}_{c}(\Omega)$ as test function in the weak formulation of \eqref{problema troncato}, and we deduce
		$$\int_{\Omega} a(x, u_{n}, \nabla u_{n}) \cdot \nabla \left( V_{\delta}(u_{n}) \varphi \right)+ \int_{\Omega} g_{n}(u_{n}) | \nabla u_{n}|^{p} V_{\delta}(u_{n}) \varphi = \int_{\Omega} h_{n}(u_{n}) f_{n} V_{\delta}(u_{n}) \varphi. $$
		Then one has
		\begin{equation*}
			\int_{\{u_{n} \le \delta\}} h_{n}(u_{n}) f_{n} \varphi \le \int_{\Omega} h_{n}(u_{n}) f_{n}  V_{\delta}(u_{n}) \varphi \le \int_{\Omega} a(x, u_{n}, \nabla u_{n}) \cdot \nabla \varphi V_{\delta}(u_{n}) +  \int_{\Omega} g_{n}(u_{n}) | \nabla u_{n}|^{p} V_{\delta}(u_{n}) \varphi.
		\end{equation*}

		Now we can take the limsup as $n\to\infty$ in the previous inequality; indeed for the first term on the right-hand one recalls Remark \ref{convqo} and the fact that $V_\delta\le 1$. For the second term on the right-hand we have already proven that  $g_{n}(u_{n})| \nabla u_{n}|^{p}$ strongly converges in $L^1(\Omega)$ to $g(u)| \nabla u|^{p}$ as $n\to\infty$. Then one has
		
		\begin{equation}\label{convhp>1}
			\limsup_{n \to \infty} \int_{\{u_{n} \le \delta\}} h_{n}(u_{n}) f_{n} \varphi \le \int_{\Omega} a(x, u, \nabla u) \cdot \nabla \varphi V_{\delta}(u) + \int_{\Omega} g(u) | \nabla u|^{p} V_{\delta }(u) \varphi. 
		\end{equation} 
		Now let $\delta \to 0$ in \eqref{convhp>1} applying the Lebesgue Theorem, deducing that
		$$\lim_{\delta \to 0} \limsup_{n \to \infty} \int_{\{u_{n} \le \delta\}} h_{n}(u_{n}) f_{n} \varphi \le \int_{\{u=0\}} a(x,u,\nabla u) \cdot \nabla \varphi + \int_{\{u=0\}} g(u) | \nabla u|^{p} \varphi =0,$$
		since $u \in W^{1,p}_{0}(\Omega)$ and $a(x,0,0)=0$ for all $x \in \Omega$ (recall $\nabla u=0$ a.e. on $\{u=0\}$). 
		
		\medskip
		
		Now we focus on the second term on the right-hand of  \eqref{convhp>10}. Observing that 
		$$h_{n}(u_{n}) f_{n} \varphi \chi_{\{u_{n}>\delta\}} \le \sup_{s\in[\delta,\infty)}h(s) f\varphi \in L^1(\Omega)$$
		then one can take $n\to\infty$, yielding to
		$$\lim_{n \to \infty} \int_{\{u_{n}> \delta \}} h_{n}(u_{n}) f_{n} \varphi = \int_{\{u > \delta \}} h(u)f\varphi,$$
		since $|\{u=\delta\}|=0$.
		
		Moreover, as $h(u)f\in L^1_{\rm loc}(\Omega)$, one can take $\delta \to 0$
		\begin{equation}\label{convhp>12}
			\lim_{\delta \to 0} \lim_{n \to \infty} \int_{\{u_{n}> \delta \}} h_{n}(u_{n}) f_{n} \varphi  = \int_{\{u>0\}} h(u)f \varphi = \int_{\Omega} h(u)f \varphi,
		\end{equation}
		where the last equality follows from the fact $\{u=0\} \subset \{f=0\}$ up to a set of zero Lebesgue measure since $h(u)f$ is locally integrable. 
		
		Finally observe that \eqref{limite su un piccolo} and \eqref{convhp>12} allow us to pass to the limit as $n\to \infty$, for fixed $\delta>0$,  and then as $\delta\to 0$ in  \eqref{convhp>10} and the proof is concluded. 
		
	\end{proof}

	\begin{remark}\label{estensione funzioni test}
		In Theorem \ref{teorema p>1} we found a distributional solution $u\in W^{1,p}_0(\Omega)$ satisfying $g(u)|\nabla u|^p\in L^1(\Omega)$. Then it is worth mentioning that in this case we can extend the class of test function given in \eqref{def_p>1} to $W^{1,p}_{0}(\Omega) \cap L^{\infty}(\Omega)$. 
		
		\medskip

		Indeed, for any $0\le v \in W^{1,p}_{0}(\Omega) \cap L^{\infty}(\Omega)$, there exists $\varphi_{n} \in C^{1}_{c}(\Omega)$ such that $\varphi_{n} \to v$ strongly in $W^{1,p}_{0}(\Omega)$ as $n\to\infty$. Moreover let $\rho_{\eta}$ be a standard mollifier. We note that $\psi_{n,\eta}=\rho_{\eta}* \min\{v, \varphi_n\} \in C^{1}_{c}(\Omega)$ for $\eta>0$ small enough, hence it is an admissible test function for the problem $(\ref{problema})$, so we can write
		\begin{equation}\label{rem_ex}
			\int_{\Omega} a(x, u, \nabla u) \cdot \nabla \psi_{n,\eta}+ \int_{\Omega} g(u) |\nabla u|^{p}  \psi_{n,\eta}= \int_{\Omega} h(u) f \psi_{n,\eta}.
		\end{equation}
		We recall that $\psi_{n,\eta} \to \psi_n=\min\{\varphi_{n},v\}$ as $\eta \to 0$ strongly in $W^{1,p}(\Omega)$ and *-weak in $L^{\infty}(\Omega)$. Then, as $a(x,u,\nabla u)\in L^{p'}(\Omega)^N$, $g(u)|\nabla u|^p \in L^1(\Omega)$ and $h(u)f\in L^1_{\rm loc}(\Omega)$, one can  take $\eta \to 0$ in \eqref{rem_ex} obtaining 
		\begin{equation}\label{rem_ex2}
			\int_{\Omega} a(x, u, \nabla u) \cdot \nabla \psi_{n}+ \int_{\Omega} g(u) |\nabla u|^{p}  \psi_{n}= \int_{\Omega} h(u) f \psi_{n}.
		\end{equation}

		Now note that $\psi_{n} \to v$ strongly in $W^{1,p}(\Omega)$ and *-weak in $L^{\infty}(\Omega)$ as $n\to\infty$, so we can  take $n\to\infty$ in the first two terms of \eqref{rem_ex2}.
		
		For the term on the right-hand of \eqref{rem_ex2} one can reason as follows. Firstly observe that an application of the Fatou Lemma with respect to $n$ gives that
		$$\int_{\Omega} h(u) f v \le \liminf_{n \to \infty} \int_{\Omega} h(u)f \psi_{n} = \int_{\Omega} a(x, u, \nabla u) \cdot \nabla v + \int_{\Omega} g(u) |\nabla u|^{p} v,$$
		whose right-hand is finite. Then, as $h(u)fv \in L^{1}(\Omega)$, one can apply the Lebesgue Theorem to obtain
		$$\lim_{n \to \infty} \int_{\Omega}  h(u)f \psi_{n} = \int_{\Omega} h(u)fv.$$
		Therefore, as the case of a function  $v$ with generic  sign easily follows,  we have proven that the solution $u$ to \eqref{problema} found in Theorem \ref{teorema p>1} satisfies  
		$$\int_{\Omega} a(x, u, \nabla u)\cdot \nabla v + \int_{\Omega} g(u) |\nabla u|^{p}  v = \int_{\Omega} h(u) f v,$$  for all $v \in W^{1,p}_{0}(\Omega) \cap L^{\infty}(\Omega)$.
	\end{remark}

	\section{Main assumptions and existence result for $p=1$}
	\label{sec:p=1}
	
	In this section we address  the limit case $p=1$. In particular we are interested in proving existence of nonnegative solutions to the following Dirichlet boundary value problem 
	\begin{equation}\label{pb12}
		\begin{cases}
			- \Delta_1 u +g(u)|Du|=h(u)f &\text{in $\Omega$,} \\
			u=0 &\text{on $\partial \Omega$,}
		\end{cases}
	\end{equation}
	where  $f$ is a positive function in $L^{1}(\Omega)$, $g:[0, \infty) \to [0, \infty)$ is a {positive,} bounded {and} continuous function such that \eqref{g infinito} is in force. The function $h:[0,  \infty) \to [0,\infty]$ is  continuous and possibly singular  with $h(0) \neq 0$,  it  is finite outside the origin and such that \eqref{h vicino 0} and \eqref{h infinito} hold.
	
	\medskip
	Here is how the notion of solution for problem \eqref{pb12} is intended.

	\begin{defin}\label{def_p=1}
		Let $0<f \in L^{1}(\Omega)$. A nonnegative  $u \in BV(\Omega)$ is a solution of problem \eqref{pb12} if $D^{j}u=0$, $g(u) \in L^{1}_{\rm{loc}}(\Omega, |Du|)$ and $h(u)f \in L^{1}_{\rm{loc}}(\Omega)$ and if there exists a vector field $z \in \mathcal{DM}^{\infty}_{\rm{loc}}(\Omega)$, with $\|z\|_{L^{\infty}(\Omega)^{N}} \le 1$ satisfying
		\begin{equation}\label{p=1 in senso distribuzionale}
			-\operatorname{div}z+g(u)|Du|=h(u)f \ \ \ \text{in} \ \mathcal{D}'(\Omega),
		\end{equation}
		\begin{equation}\label{parring di anzellotti}
			(z,DT_k(u))=|DT_k(u)| ~~~ \text{as measures in $\Omega$ for any $k>0$,}
		\end{equation}
		
		and
		
		\begin{equation}\label{condizione al bordo per p=1}
			u(x)=0 ~~~\text{for $\mathcal{H}^{N-1}$-a.e. $x \in \partial \Omega$.}
		\end{equation}
	\end{defin}
	\begin{remark}\label{423}
		Let us spend a few words on Definition \ref{def_p=1}. Formula \eqref{parring di anzellotti} is the weak way in which the vector field $z$ plays the role of the singular quotient $Du|Du|^{-1}$. Hence the  \eqref{p=1 in senso distribuzionale} and \eqref{parring di anzellotti}  represent   the weak  way the first term  in \eqref{pb12} is intended.  
		
		Furthermore, the boundary datum is given by \eqref{condizione al bordo per p=1} which is something strongly related to the presence of $g$. Indeed, it is classical nowadays that solutions to $1$-Laplace Dirichlet problems do not attain, in general, the boundary datum pointwise when  $g\equiv 0$, and  in this  case, a weaker condition involving $[z,\nu]$ is usually required (see for instance \cite{ABCM, ACM, dgop}).	
		
		Let us finally 	explicitly observe that  if  $h(0)=\infty$,   as $h(u)f \in L^{1}_{\rm{loc}}(\Omega)$,  then, again, $\{u=0\}\subset \{f=0\}$. So that in this case, as $f>0$,   then $u>0$.  
	\end{remark}
	
	We define the following function which will be widely used in the sequel
	$$\Gamma_{p}(s):=\int_{0}^{s} g^{\frac{1}{p}}(\sigma) \ d \sigma.$$
	Moreover, we denote by $\Gamma(s):=\Gamma_1(s)$. Let explicitly observe that, as $g$ is bounded, $\Gamma(s)$ is a Lipschitz function satisfying assumptions of Lemma \ref{chainrule}. In Section \ref{sec:ex_ginf} we briefly discuss the case of a $g$ possibly unbounded at infinity. 
	
	\medskip
	
	A very similar reasoning to the one of \cite[Remark 3.4]{ms} gives the following result.
	
	\begin{proposition}
		Let  $u$ be a  solution of the problem $(\ref{pb12})$ in the sense of Definition \ref{def_p=1}, then \begin{equation}\label{equazione per p=1 con esponenzionale}
			-\operatorname{div}(z e^{-\Gamma(u)})=  h(u) fe^{-\Gamma(u)}  \ \  \text{in} \ \mathcal{D}'(\Omega).
		\end{equation}
	\end{proposition}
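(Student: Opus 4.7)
The natural route is to apply the product-rule identity \eqref{27} to the vector field $z\,e^{-\Gamma(u)}$, then combine it with \eqref{p=1 in senso distribuzionale} and the saturation identity \eqref{parring di anzellotti} to recognize the gradient-absorption term $g(u)|Du|$ as exactly the contribution of the exponential factor. Since $g$ is bounded, $\Gamma$ is Lipschitz, so $e^{-\Gamma(u)}\in BV_{\mathrm{loc}}(\Omega)\cap L^\infty(\Omega)$; moreover, $e^{-\Gamma(u)}$ is a continuous composition of $u$ (which has $D^j u=0$), hence has no jump part either, and its precise representative equals itself. The formula \eqref{27} therefore yields $ze^{-\Gamma(u)}\in \mathcal{DM}^\infty_{\mathrm{loc}}(\Omega)$ with
$$
\operatorname{div}(z\,e^{-\Gamma(u)}) \;=\; (z,D(e^{-\Gamma(u)})) \;+\; e^{-\Gamma(u)}\operatorname{div}z,
$$
and substituting \eqref{p=1 in senso distribuzionale} $\operatorname{div}z=g(u)|Du|-h(u)f$ reduces the whole assertion to the measure identity
$$
(z,D(e^{-\Gamma(u)})) \;=\; -\,e^{-\Gamma(u)} g(u)\,|Du| \qquad\text{in } \Omega.\qquad(\star)
$$

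To prove $(\star)$ I would truncate. Fix $k>0$ with $|\{u=k\}|=0$ (true for all but countably many $k$) and set
$$
\Psi_k(s) \;:=\; 1-e^{-\Gamma(T_k(s))}, \qquad \Psi_k'(s)\;=\;g(T_k(s))\,e^{-\Gamma(T_k(s))}\,\chi_{\{s<k\}}\quad\text{a.e.,}
$$
which is Lipschitz and nondecreasing. Since $u\in BV(\Omega)$ with $D^ju=0$, Lemma \ref{chainrule} gives
$$
D\Psi_k(u) \;=\; g(T_k(u))\,e^{-\Gamma(T_k(u))}\, DT_k(u),
$$
and, by nonnegativity of the weight, $|D\Psi_k(u)|=g(T_k(u))\,e^{-\Gamma(T_k(u))}\,|DT_k(u)|$. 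Combining \eqref{parring di anzellotti}, i.e.\ $(z,DT_k(u))=|DT_k(u)|$, with a chain-rule statement for the Anzellotti pairing that allows one to pull a bounded nonnegative continuous weight $\phi(u)$ inside $(z,\cdot)$, namely $(z,\phi(u)\,DT_k(u))=\phi(u)\,(z,DT_k(u))$ (established by smoothing $\Psi_k$ to a $C^1$ function and applying \eqref{27} to both $T_k(u)z$ and $\Psi_k(u)z$), one obtains
$$
(z,D\Psi_k(u)) \;=\; g(T_k(u))\,e^{-\Gamma(T_k(u))}\,|DT_k(u)|.
$$

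Finally, as $k\to\infty$ the right-hand side increases, as measures, to $g(u)\,e^{-\Gamma(u)}|Du|$, while $\Psi_k(u)\to 1-e^{-\Gamma(u)}$ pointwise and in $BV_{\mathrm{loc}}(\Omega)$, so the left-hand side converges to $(z,D(1-e^{-\Gamma(u)}))=-(z,D(e^{-\Gamma(u)}))$. This gives $(\star)$; plugging it into the expansion of $\operatorname{div}(ze^{-\Gamma(u)})$ yields $\operatorname{div}(z\,e^{-\Gamma(u)})=-e^{-\Gamma(u)}h(u)f$, which is exactly \eqref{equazione per p=1 con esponenzionale}. The main obstacle is the chain-rule-type step for the Anzellotti pairing used to move the weight $g(T_k(u))e^{-\Gamma(T_k(u))}$ inside $(z,\cdot)$: this is the technical heart of the argument, and one has to be careful both in the smoothing step, where \eqref{27} is invoked to reduce to a $C^1$ situation, and in passing from the truncated bounded setting to the (possibly unbounded) function $u$, where monotone convergence as measures together with the uniform bound $\|z\|_\infty\le 1$ is essential.
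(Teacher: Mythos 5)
Your outer reduction is exactly the paper's: expand $\operatorname{div}(ze^{-\Gamma(u)})$ via \eqref{27}, substitute $\operatorname{div}z=g(u)|Du|-h(u)f$ from \eqref{p=1 in senso distribuzionale}, and recognize that everything hinges on the single measure identity $(z,D(1-e^{-\Gamma(u)}))=|D(1-e^{-\Gamma(u)})|$, i.e.\ your $(\star)$. Where you diverge is in how that identity is obtained. The paper does not prove it from scratch; it cites \cite[Proposition 3.3]{ls1}, which states that the Radon--Nikod\'ym density $\theta(z,Dv,x)$ of $(z,Dv)$ with respect to $|Dv|$ equals $1$ for $v$ a nondecreasing Lipschitz transform of $u$, given the saturation \eqref{parring di anzellotti} for the truncates. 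Your plan reconstructs this lemma via the auxiliary $\Psi_k(s)=1-e^{-\Gamma(T_k(s))}$ and a ``chain rule for the Anzellotti pairing''; but the step you describe as ``established by smoothing $\Psi_k$ to a $C^1$ function and applying \eqref{27} to both $T_k(u)z$ and $\Psi_k(u)z$'' is precisely the nontrivial part, and as stated it is only a sketch --- smoothness of $\Psi_k$ does not by itself yield $(z,\phi(u)Dw)=\phi(u)(z,Dw)$, because one has to control the pairing on the Cantor part of $Dw$, which is the actual content of the cited proposition. You are therefore correct in structure and correctly identify the technical heart, but the argument is incomplete exactly where it needs a genuine lemma; you would need to either cite \cite[Proposition 3.3]{ls1} (or an equivalent result on $\theta(z,\cdot,\cdot)$) or supply its proof. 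A minor simplification: since $g$ is bounded, $\Gamma$ is globally Lipschitz, so the truncation $T_k$ and the subsequent passage $k\to\infty$ are unnecessary --- one can work directly with $1-e^{-\Gamma(u)}$, as the paper does.
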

	\begin{proof}
		By $(\ref{parring di anzellotti})$	and  \cite[Proposition 3.3]{ls1} 
		we have that 
		$$\theta(z, D(1- e^{-\Gamma(u)}),x)=1 \ \ \ \  \text{$|D(1-e^{-\Gamma(u)})|-$a.e. in $\Omega$,} $$
		where $\theta(z, D v,x)$ is the Radon–Nikod\'ym derivative of $(z,D v)$ with respect to $|D v|$ provided $v\in BV (\Omega)$. 
		Consequently, for all Borel sets $B \subset \Omega,$
		$$\int_{B}(z,D(1- e^{-\Gamma(u)}))=\int_{B} \theta(z, D(1- e^{-\Gamma(u)}),x)~ |D(1-e^{-\Gamma(u)})|= \int_{B} |D(1-e^{-\Gamma(u)})|. $$
		Therefore \begin{equation}\label{misure con esponenziale}
			(z,D(1- e^{-\Gamma(u)}))= |D(1-e^{-\Gamma(u)})| ~~~ \text{as Radon measures in $\Omega$. }
		\end{equation}
		On the other hand, by \eqref{27}, $(\ref{misure con esponenziale})$, $(\ref{p=1 in senso distribuzionale})$ and Lemma \ref{chainrule}, we have 
		\begin{equation*}
			\begin{aligned}
				-&\operatorname{div}( e^{- \Gamma(u)}z)=\operatorname{div}((1- e^{-\Gamma(u)})z)-\operatorname{div}z =(z, D(1-e^{-\Gamma(u)}))+(1- e^{-\Gamma(u)}) \operatorname{div}z-\operatorname{div}z= \\ =&|D(1- e^{-\Gamma(u)})| - ( e^{-\Gamma(u)}) \operatorname{div}z=( e^{-\Gamma(u)})g(u)|Du|-( e^{-\Gamma(u)})(-h(u)f+g(u)|Du|)= e^{-\Gamma(u)}h(u)f,
			\end{aligned}  
		\end{equation*}
		i.e. we obtain $(\ref{equazione per p=1 con esponenzionale})$.
	\end{proof}
	
	Let us then state the main  result of this section.
	
	\begin{theorem}\label{teorema per p=1}
		Let $g$ be positive, bounded and satisfying \eqref{g infinito} and let $h$ satisfy \eqref{h vicino 0} and \eqref{h infinito}. Finally let $0<f \in \textit{L}^{1}(\Omega)$. Then there exists a solution to \eqref{pb12} in the sense of Definition \ref{def_p=1}.
	\end{theorem}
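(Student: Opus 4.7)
The plan is to obtain the solution as the limit $p \to 1^+$ of the solutions $u_p$ to the $p$-Laplace problem \eqref{problema} whose existence is granted by Theorem \ref{teorema p>1}. Testing the $p$-equation with $T_{\overline k}(u_p)$ (admissible by Remark \ref{estensione funzioni test}) and arguing as in Lemma \ref{lemma esistenza u_p} via \eqref{g infinito}, \eqref{h vicino 0}, \eqref{h infinito}, I get $\int_\Omega|\nabla u_p|^p \le C$ with $C$ independent of $p$ in a right-neighborhood of $1$. Young's inequality promotes this to $\int_\Omega|\nabla u_p|\le C$, so up to subsequences $u_p \to u$ in $L^1(\Omega)$ and a.e., with $u\in BV(\Omega)$. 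The same argument shows that $g(u_p)|\nabla u_p|^p$ is bounded in $L^1(\Omega)$ and $h(u_p)f$ in $L^1_{\rm loc}(\Omega)$, so Fatou's lemma gives $h(u)f\in L^1_{\rm loc}(\Omega)$ and $g(u)\in L^1_{\rm loc}(\Omega,|Du|)$.

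Next I would set $z_p := |\nabla u_p|^{p-2}\nabla u_p$. Since for any fixed $q<\infty$ and $p$ close enough to $1$ one has $q(p-1)<p$, Hölder's inequality yields
\begin{equation*}
\int_\Omega|z_p|^q = \int_\Omega|\nabla u_p|^{q(p-1)} \le |\Omega|^{1 - q(p-1)/p}\Bigl(\int_\Omega|\nabla u_p|^p\Bigr)^{q(p-1)/p},
\end{equation*}
so $z_p$ is eventually bounded in each $L^q(\Omega)^N$; extracting a further subsequence gives $z_p \rightharpoonup z$ in every such $L^q$, and letting $q\to\infty$ produces $z\in L^\infty(\Omega)^N$ with $\|z\|_\infty\le 1$. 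The decisive algebraic identity, obtained by expanding the divergence and using $\nabla e^{-\Gamma(u_p)} = -e^{-\Gamma(u_p)} g(u_p)\nabla u_p$ together with the $p$-equation, is
\begin{equation*}
-\operatorname{div}\bigl(e^{-\Gamma(u_p)} z_p\bigr) = e^{-\Gamma(u_p)} h(u_p) f \qquad \text{in } \mathcal D'(\Omega).
\end{equation*}
Since $e^{-\Gamma(u_p)}\to e^{-\Gamma(u)}$ strongly in every $L^q$ and $z_p\rightharpoonup z$, passage to the limit on the left is straightforward, while for the right-hand side a $V_\delta$-truncation argument mirroring the one used in the proof of Theorem \ref{teorema p>1} handles the possible singularity of $h$ at the origin, producing \eqref{equazione per p=1 con esponenzionale}.

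The most delicate step is to recover the Anzellotti pairing identity \eqref{parring di anzellotti}. The inequality $(z, DT_k(u))\le |DT_k(u)|$ is automatic from \eqref{finitetotal1} thanks to $\|z\|_\infty\le 1$. For the reverse inequality I would test the $p$-equation against $\varphi T_k(u_p)$ for $0\le\varphi\in C_c^1(\Omega)$, drop the nonnegative term $g_p(u_p)|\nabla u_p|^p\varphi T_k(u_p)$, integrate the flux term by parts, and then pass to the limit using the weak-$*$ BV convergence of $T_k(u_p)$ and the weak convergence of $z_p$ in $L^q$ together with the Young-type bound $|\nabla u_p|\cdot|\nabla T_k(u_p)|\ge|z_p\cdot\nabla T_k(u_p)|$; a comparison with the LSC of total variation then yields $\int\varphi(z,DT_k(u))\ge\int\varphi|DT_k(u)|$. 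Combined with the reverse inequality this gives \eqref{parring di anzellotti}, and inserting it back into \eqref{equazione per p=1 con esponenzionale} via the chain rule (Lemma \ref{chainrule}) and \eqref{27} reproduces \eqref{p=1 in senso distribuzionale}. The jump condition $D^j u=0$ is then an immediate consequence of Lemma \ref{lemma derivata salto nulla} applied with $\lambda = h(u)f\in L^1_{\rm loc}(\Omega)$.

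Finally, for the boundary condition \eqref{condizione al bordo per p=1} I would exploit the uniform bound on $\Gamma(u_p)\in W^{1,1}_0(\Omega)$, which follows from Young's inequality $g(u_p)|\nabla u_p|\le p^{-1}g(u_p)|\nabla u_p|^p + (p')^{-1} g(u_p)$ combined with the $L^\infty$-bound on $g$. Interpreting $e^{-\Gamma(u)}z$ as a vector field in $\mathcal{DM}^\infty(\Omega)$ whose weak normal trace satisfies $\bigl|[e^{-\Gamma(u)}z,\nu]\bigr|\le 1$, and applying the Green formula \eqref{green} to a test function supported up to the boundary, forces $e^{-\Gamma(u)}\equiv 1$ $\mathcal H^{N-1}$-a.e.\ on $\partial\Omega$; the strict monotonicity of $\Gamma$ (since $g>0$) then gives $u = 0$ $\mathcal H^{N-1}$-a.e.\ on $\partial\Omega$. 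The main obstacle is the simultaneous handling of the possibly singular term $h(u)$ near $\{u=0\}$ and the absorption term $g(u)|Du|$ when identifying the pairing \eqref{parring di anzellotti}, as the two nonlinearities must be carefully balanced in the limit so that neither \emph{test-function admissibility} nor \emph{lower semicontinuity of the total variation} is lost.
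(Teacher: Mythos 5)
Your high-level strategy (approximate by the $p$-problem, extract $z$ as the weak limit of $|\nabla u_p|^{p-2}\nabla u_p$, use the exponential weight $e^{-\Gamma(u_p)}$ to kill the gradient term, then identify the pairing) does match the paper's route, and the estimates on $u_p$, $\Gamma_p(u_p)$, and $z_p$ are fine. However, there are two genuine gaps in the order and substance of the argument.

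\textbf{Ordering and the jump part.} You place the proof of $D^j u=0$ at the very end, as "an immediate consequence of Lemma \ref{lemma derivata salto nulla}", after having already written down the equation $-\operatorname{div}z+g(u)|Du|=h(u)f$ and used the chain rule to pass from the exponential identity to this form. This is circular: Lemma \ref{chainrule} requires $D^j u=0$ as a hypothesis before you may write $|D\Gamma(u)|=g(u)|Du|$, and indeed $g(u)|Du|$ is not even an unambiguously defined measure on the jump set of $u$. The paper avoids the circularity by first deriving, by lower semicontinuity directly from the $p$-formulation (without any chain rule), the distributional \emph{inequality} $-\operatorname{div}z+|D\Gamma(u)|\le h(u)f$, then applies Lemma \ref{lemma derivata salto nulla} to $w=\Gamma(u)$ (not to $u$) to conclude $D^j\Gamma(u)=0$, and only then deduces $D^j u=0$ using that $\Gamma^{-1}$ is increasing. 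Your proposal skips this intermediate inequality entirely, so the jump claim is unsupported as stated.

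\textbf{The pairing.} Your plan to "drop the nonnegative term $g(u_p)|\nabla u_p|^p T_k(u_p)\varphi$" when testing with $T_k(u_p)\varphi$ destroys exactly the cancellation needed. After dropping it, passing to the liminf, and rewriting $\int z\cdot\nabla\varphi\,T_k(u)$ through the definition of $(z,DT_k(u))$ and the equation $-\operatorname{div}z+g(u)|Du|=h(u)f$, you are left with an extra negative term $-\int T_k(u)g(u)|Du|\,\varphi$ that makes the resulting inequality strictly weaker than what is needed. The paper's trick is precisely \emph{not} to drop that term but to rewrite it as $\int|\nabla\tilde\Gamma_p(u_p)|^p\varphi$ with $\tilde\Gamma_p(s)=\int_0^s (T_k(\sigma)g(\sigma))^{1/p}d\sigma$; the same quantity then appears, after lower semicontinuity, as $\int|D\tilde\Gamma(u)|\varphi$ on the left, and is reproduced on the right via \eqref{peru}, so it cancels exactly. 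This is the step that, in your own words, needs to "carefully balance" the two nonlinearities, and the balancing is lost once the term is discarded. Finally, the boundary argument as sketched does not close either: the bound $|[e^{-\Gamma(u)}z,\nu]|\le 1$ is automatic from $\|z\|_\infty\le 1$ and $e^{-\Gamma(u)}\le 1$ and forces nothing. The paper instead tests with $T_k(u_p)$, keeps the boundary integrals in the $BV$-lower-semicontinuity step, and combines the Green formula with the pointwise bound $|[T_k(u)z,\nu]|\le T_k(u)|_{\partial\Omega}$ and the additional boundary term $\int_{\partial\Omega}\tilde\Gamma(u)\,d\mathcal H^{N-1}$ to force $u=0$ on $\partial\Omega$.
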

	
	\subsection{Approximation scheme and existence of a limit function}
	
	The proof of Theorem \ref{teorema per p=1} will be presented as an application of a series of lemmas. We introduce the following approximation scheme: 
	\begin{equation}\label{approxp=1}
		\begin{cases}
			- \Delta_p u_p + g(u_p)|\nabla u_p|^p=h(u_p)f &\text{in $\Omega$,} \\
			u_p=0 &\text{on $\partial \Omega$,}
		\end{cases}
	\end{equation}
	whose existence of $u_p\in W^{1,p}_0(\Omega)$ in the sense of Definition \ref{soluzione p} has been proven in Theorem \ref{teorema p>1}. Let us explicitly observe that, as long as we deal with the solution found in the mentioned theorem, Remark \ref{estensione funzioni test} is in force; this means that the set of test functions is enlarged to $W^{1,p}_0(\Omega)\cap L^\infty(\Omega)$.
	
	\medskip

	We firstly look for some uniform estimates on $u_p$ for $p>1$ small enough. Without loss of generality and for the sake of exposition, by {\it uniformly bounded with respect to $p$} we mean the existence of $p_0>1$ with some estimate holding for any $1<p\leq p_0$. 
	
	\begin{lemma}\label{lemma convergenza delle up ad u}
		Let $g$ be positive, bounded and satisfying \eqref{g infinito}, let $h$ satisfy \eqref{h vicino 0} and \eqref{h infinito} and let $0<f \in \textit{L}^{1}(\Omega)$. Let $u_{p}$ be the solution to \eqref{approxp=1} obtained in Theorem \ref{teorema p>1}. Then $u_{p}$ and $\Gamma_p(u_p)$ are uniformly bounded with respect to $p$ in $BV(\Omega)$. Moreover there exists $u \in BV(\Omega)$ such that $u_p$ converges to u (up to a subsequence) in $L^q(\Omega)$ for every $q<\frac{N}{N-1}$ and $\nabla u_p$ converges to $Du$ *-weakly as measures. Finally,  $\Gamma (u) \in BV(\Omega)$ and $h(u)f \in L^{1}_{\rm{loc}}(\Omega)$.
	\end{lemma}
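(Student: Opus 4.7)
The plan is to derive uniform (in $p$) estimates on $u_p$ starting from the weak formulation of \eqref{approxp=1} with a suitable truncation as test function, and then apply $BV$-compactness and Fatou-type arguments to produce the limit $u$ and control $\Gamma(u)$ and $h(u)f$.

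First I would establish a uniform $BV$-bound on $u_p$. Choose $\bar k > 0$ coming from \eqref{g infinito}, so that $g(s) \ge \eta > 0$ for every $s \ge \bar k$, and plug $T_{\bar k}(u_p) \in W^{1,p}_0(\Omega) \cap L^\infty(\Omega)$ (admissible by Remark \ref{estensione funzioni test}) into the weak formulation of \eqref{approxp=1}. This is precisely the computation carried out in Lemma \ref{lemma esistenza u_p} with $\alpha=1$: splitting the right-hand side according to $\{u_p \le s_1\}$ and $\{u_p > s_1\}$ and using \eqref{h vicino 0} (with $\gamma \le 1$) together with \eqref{h infinito} yields
\[
\int_\Omega |\nabla u_p|^p \le C,
\]
where $C$ depends only on the data of the problem and, crucially, not on $p$. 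Hölder's inequality then gives
\[
\int_\Omega |\nabla u_p| \le C^{1/p} |\Omega|^{(p-1)/p},
\]
which remains bounded as $p \to 1^+$. Since $u_p$ has zero trace on $\partial \Omega$, this shows that $u_p$ is uniformly bounded in $BV(\Omega)$. As $\Gamma_p'(s) = g^{1/p}(s)$ and $g$ is bounded, the chain rule furthermore yields
\[
\int_\Omega |\nabla \Gamma_p(u_p)| = \int_\Omega g^{1/p}(u_p)|\nabla u_p| \le \|g\|_\infty^{1/p} \int_\Omega |\nabla u_p|,
\]
so $\Gamma_p(u_p)$ is uniformly bounded in $BV(\Omega)$ as well.

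Next I would exploit the compact embedding $BV(\Omega) \hookrightarrow\hookrightarrow L^q(\Omega)$ for $q < N/(N-1)$ to extract a subsequence converging strongly to some $u \in BV(\Omega)$ in $L^q$ and, after a further extraction, almost everywhere. Viewed as vector-valued Radon measures, $\nabla u_p$ has uniformly bounded total variation, so by weak-$*$ compactness in $\mathcal{M}(\Omega)^N$ one further extracts a subsequence converging to some $\mu$. Testing against $\varphi \in C^1_c(\Omega)^N$ in the integration-by-parts formula and using the $L^1$-convergence of $u_p$ identifies $\mu$ with the distributional derivative $Du$, and density of $C^1_c$ in $C_c$ with respect to the uniform norm extends the weak-$*$ convergence to all continuous compactly supported test fields.

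Finally, the a.e.\ convergence gives $\Gamma_p(u_p) \to \Gamma(u)$ a.e., and the uniform $BV$-bound just obtained together with lower semicontinuity of the total variation yields $\Gamma(u) \in BV(\Omega)$. For the local integrability of $h(u)f$, Lemma \ref{lemma esistenza u_p} already provides a uniform-in-$p$ bound $\int_K h(u_p)f \le C_K$ for every compact $K \subset \Omega$; since $h$ is continuous on $[0,\infty)$ with the convention that $h(0) = +\infty$ is allowed, the a.e.\ convergence $u_p \to u$ gives $h(u_p)f \to h(u)f$ a.e.\ in $[0,+\infty]$, and Fatou's lemma then yields $h(u)f \in L^1_{\rm loc}(\Omega)$. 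The main technical obstacle is ensuring that the constant in the $W^{1,p}$-energy estimate is genuinely independent of $p$, so that Hölder's inequality produces a finite, $p$-uniform $L^1$-bound on the gradients; this is precisely what the truncation $T_{\bar k}(u_p)$ (rather than $u_p$ itself, which need not belong to $L^\infty$) is designed to achieve.
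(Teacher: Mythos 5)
Your proof follows essentially the same route as the paper's: test with $T_{\bar k}(u_p)$ to get the $p$-uniform energy bound, pass to $L^1$ via Hölder or Young, use $BV$ compactness for the limit $u$, bound $\Gamma_p(u_p)$ in $BV$, and conclude $\Gamma(u)\in BV(\Omega)$ and $h(u)f\in L^1_{\rm loc}(\Omega)$ by lower semicontinuity and Fatou. Two cosmetic differences: you use Hölder rather than Young for $\int|\nabla u_p|$, and you control $\int|\nabla\Gamma_p(u_p)|$ directly by $\|g\|_\infty^{1/p}\int|\nabla u_p|$ rather than via Young and the bound on $\int g(u_p)|\nabla u_p|^p$; both are fine and the latter is actually a slight simplification given $g$ is bounded in this section.

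The one point to fix is the citation of Lemma~\ref{lemma esistenza u_p} for the local $L^1$-bound on $h(u_p)f$. That lemma is stated for a fixed exponent $p$ and provides uniformity only in the index $n$ of the inner approximation; it does not, as stated, produce a bound uniform in $p$, which is what you need here. The paper instead re-derives the estimate directly: testing \eqref{approxp=1} with $0\le\varphi\in C^1_c(\Omega)$ gives
\[
\int_\Omega h(u_p)f\varphi
= \int_\Omega |\nabla u_p|^{p-2}\nabla u_p\cdot\nabla\varphi + \int_\Omega g(u_p)|\nabla u_p|^p\varphi
\le \frac{1}{p'}\int_\Omega|\nabla u_p|^p + \frac{1}{p}\int_\Omega|\nabla\varphi|^p + \int_\Omega g(u_p)|\nabla u_p|^p\varphi,
\]
which is bounded uniformly in $p$ by the two gradient estimates you have already established. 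Since you explicitly flag $p$-uniformity as the key technical concern, this is a small misattribution rather than a conceptual gap; replacing the citation with the Young-inequality computation above closes it.
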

	\begin{proof}
		We  observe  again that, as \eqref{g infinito} is in force, there exists $\overline{k}>0$ such that for all $s \in [\overline{k}, \infty)$ one has $g(s)\geq\eta>0$ for some $\eta>0$.
		
		We choose $T_{\overline{k}}(u_{p}) \in W^{1,p}_{0}(\Omega) \cap L^{\infty}(\Omega)$ as test function in the weak formulation of \eqref{approxp=1} (recall Remark $\ref{estensione funzioni test}$), so that $$\int_{\Omega} |\nabla u_{p}|^{p-2} \nabla u_{p} \cdot \nabla T_{\overline{k}}(u_{p})+ \int_{\Omega}g(u_{p})|\nabla u_{p}|^{p} T_{\overline{k}}(u_{p})  = \int_{\Omega} h(u_{p})fT_{\overline{k}}(u_{p}),$$
		from which
		\begin{equation}\label{stima1}
			\min\{\alpha, \overline{k}\eta\}\int_{\Omega} |\nabla u_{p}|^{p} \le \left(c_{1}s_{1}^{1-\gamma}+\sup_{s \in [s_{1}, \infty)}h(s) \overline{k} \right) \|f\|_{L^{1}(\Omega)}.
		\end{equation}	
		It follows from \eqref{stima1} and from an application of the Young inequality that 
		$$\int_{\Omega} |\nabla u_{p}| \le \frac{1}{p}\int_{\Omega} | \nabla u_{p}|^{p}+ \frac{1}{p'}|\Omega| \le \frac{1}{p \min\{\alpha, \overline{k}\eta\}}\left(c_{1}s_{1}^{1-\gamma}+\sup_{s \in [s_{1}, \infty[}h(s) \overline{k} \right) \|f\|_{L^{1}(\Omega)} + \frac{1}{p'}|\Omega|,$$
		which shows that $u_p$ is bounded in $BV(\Omega)$ with respect to $p$ since the right-hand of the previous is bounded with respect to $p$ and $u_p$ has zero trace on $\partial\Omega$.
		
		\medskip
		
		Then standard compactness result for $BV$ functions  assures that there exists $u \in BV(\Omega)$ such that, up to a subsequence, $u_p$ converges in $L^{q}(\Omega)$ for any $q<\frac{N}{N-1}$, almost everywhere in $\Omega$ and $\nabla u_{p}$ converges to $Du$ *-weakly as measures as $p\to 1^+$.
		
		\medskip
		
		To show that  $\Gamma(u_p)$ is bounded in $BV(\Omega)$ it is sufficient to reason  as for \eqref{stimag} and \eqref{gn limitata uniformemente} in order to deduce 
		\begin{equation}\label{stima secondo termine per p=1}
			\int_{\Omega} g(u_{p}) |\nabla u_{p}|^{p}  \le C,
		\end{equation} 
		where $C$ is a constant independent of  $p$, and to use the Young inequality to get
		\begin{equation}\label{240}
			\int_{\Omega}  |\nabla \Gamma_p(u_{p})|=	\int_{\Omega} g(u_{p})^{\frac{1}{p}} |\nabla u_{p}| \leq 	\int_{\Omega} g(u_{p}) |\nabla u_{p}|^{p}  +|\Omega| \le C+|\Omega|; 
		\end{equation}

		observe that, in particular, by weak lower semicontinuity one has that $\Gamma (u) \in BV(\Omega)$.

		\medskip
		
		Now we focus on showing that $h(u)f$ is locally integrable. We choose $0 \le \varphi \in C^{1}_{c}(\Omega)$ as a test function in the weak formulation of \eqref{approxp=1}; this yields to 
		\begin{equation}\label{stimah}
			\begin{aligned}
				\int_{\Omega} h(u_{p}) f \varphi &= \int_{\Omega}|\nabla u_{p}|^{p-2} \nabla u_{p} \cdot\nabla \varphi + \int_{\Omega} g(u_{p})|\nabla u_{p}|^{p} \varphi 
				\\
				&\le \frac{1}{p'} \int_{\Omega} |\nabla u_{p}|^{p} + \frac{1}{p} \int_{\Omega} | \nabla \varphi|^{p} +\int_{\Omega} g(u_{p})|\nabla u_{p}|^{p} \varphi.
			\end{aligned}	
		\end{equation}
		As the right-hand of \eqref{stimah} is bounded with respect to $p$ thanks to \eqref{stima1} and \eqref{stima secondo termine per p=1}, one has that $h(u_p)f$ is locally bounded in $L^1(\Omega)$ with respect to $p$.
		
		Finally an application of the Fatou Lemma as $p\to 1^+$ gives that $h(u)f \in L^1_{\rm loc}(\Omega)$. The proof is concluded.
	\end{proof}
	
	In the next Lemma we find a vector field $z$ which is the weak limit of $|\nabla u_p|^{p-2}\nabla u_p$ as $p\to 1^+$. 
	
	\begin{lemma}\label{lemesistenzaz}
		Let $g$ be positive, bounded and satisfying \eqref{g infinito}, let $h$ satisfy \eqref{h vicino 0} and \eqref{h infinito} and  let $0<f \in \textit{L}^{1}(\Omega)$. Moreover, let $u$ be the function found in Lemma \ref{lemma convergenza delle up ad u}. Then there exists a vector field {$z \in \DM_{\rm loc}(\Omega)$} with $\|z\|_{L^{\infty}(\Omega)^{N}} \le 1$,  such that 
		\begin{equation}\label{ze}
			z e^{-\Gamma(u)}\in \mathcal{DM}^{\infty}(\Omega),
		\end{equation}
		\begin{equation}\label{uguaglianza esponente gamma}
			-\operatorname{div}(z e^{-\Gamma(u)})=h(u)fe^{-\Gamma(u)} ~~~\text{in $\mathcal{D}'(\Omega)$,}
		\end{equation}
		and \begin{equation}\label{disuguaglianza in distribuzione dell'equazione}
			-\operatorname{div}z+|D \Gamma(u)| \le  h(u)f ~~~ \text{in $\mathcal{D}'(\Omega)$.}
		\end{equation}

	\end{lemma}
	\begin{proof}
		Let $u_{p}$ be the solution to \eqref{approxp=1} obtained in Theorem \ref{teorema p>1}. Then it follows from \eqref{stima1} and from an application of the H\"older inequality that, for $1 \le q < p'$, it holds
		\begin{equation}\label{stima campo di anzellotti}
			\int_{\Omega} \left| |\nabla u_{p}|^{p-2} \nabla u_{p}\right|^{q}= \int_{\Omega}|\nabla u_{p}|^{q(p-1)} \le \left( \int_{\Omega} |\nabla u_{p}|^{p} \right)^{\frac{q(p-1)}{p}}|\Omega|^{1-\frac{q(p-1)}{p}} \le C^{\frac{q(p-1)}{p}} |\Omega|^{1-\frac{q(p-1)}{p}}.
		\end{equation}
		Hence $|\nabla u_{p}|^{p-2} \nabla u_{p}$ is bounded in $L^{q}(\Omega)^N$ with respect to $p$ and there exists $z_q\in L^{q}(\Omega)^N$ such that $|\nabla u_{p}|^{p-2} \nabla u_{p} \rightharpoonup z_{q}$ weakly in $L^{q}(\Omega)^N$, for all $q<\infty$.
		Moreover it follows from the lower semicontinuity in $(\ref{stima campo di anzellotti})$ with respect to $p$ that $$\|z\|_{L^q(\Omega)^N} \le |\Omega|^{\frac{1}{q}}, ~~~ \forall q < \infty,$$
		and thus letting $q \to \infty$ then $z \in L^{\infty}(\Omega)^{N}$ with $\|z\|_{L^\infty(\Omega)^N} \le 1$.

		\medskip
		
		Let us now show \eqref{uguaglianza esponente gamma}; let us take $e^{- \Gamma(u_{p})} \varphi$ as a test function in the weak formulation of \eqref{approxp=1} where $0\le \varphi \in C^{1}_{c}(\Omega)$, yielding to
		$$\int_{\Omega} |\nabla u_{p}|^{p-2} \nabla u_{p} \cdot \nabla \varphi e^{- \Gamma(u_{p})}= \int_{\Omega} h(u_{p})f e^{- \Gamma(u_{p})} \varphi.$$
		We can pass to the limit in  the left-hand of the previous since $e^{-\Gamma(u_p)}|\nabla u_p|^{p-2}\nabla u_p$ converges to $e^{-\Gamma(u)}z$ in $L^q(\Omega)^{N}$ for any $q<\infty$ as $p\to 1^+$. This shows that 
		\begin{equation*}\label{covergenza lato destro campo ed esponenziale}
			\lim_{p \to 1^+} \int_{\Omega} | \nabla u_{p}|^{p-2}\nabla u_{p} \cdot \nabla \varphi e^{- \Gamma(u_{p})}= \int_{\Omega}z \cdot \nabla \varphi ~e^{- \Gamma(u)}.
		\end{equation*}
		For the right-hand we distinguish two cases: if h is finite at the origin then the passage to the limit is trivial. Hence, without losing generality we assume that $h(0)=\infty$. We first split the integral as
		\begin{equation}\label{passaggiolimiteh}
			\int_{\Omega} h(u_{p})f e^{-\Gamma(u_{p})} \varphi = \int_{\{u_{p} \le \delta\}} h(u_{p})f e^{-\Gamma(u_{p})} \varphi + \int_{\{u_{p}>\delta\}} h(u_{p})f e^{-\Gamma(u_{p})} \varphi,
		\end{equation}
		where $\delta \not\in \{\eta : |\{u=\eta\}|>0\}$ which is at most a countable set.
		
		Observe that it follows from Lemma \ref{lemma convergenza delle up ad u} that $h(u)f$ is locally integrable. Since $h(0)=\infty$ and $f>0$, then $u>0$ almost everywhere in $\Omega$.
		Moreover, since
		$$\chi_{\{u_{p}>\delta\}}h(u_{p})fe^{-\Gamma(u_p)}\varphi \le \sup_{s\in [\delta,\infty)}h(s) f\varphi \in L^1(\Omega),$$
		and
		$$\chi_{\{u>\delta\}}h(u)fe^{-\Gamma(u)}\varphi \le h(u)f\varphi \in L^1(\Omega),$$ 
		one can apply twice the Lebesgue Theorem, deducing that
		\begin{equation}\label{hf esponenzionale limite in p e delta}
			\lim_{\delta \to 0}\lim_{p \to 1^+} \int_{\{u_{p}>\delta\}}h(u_{p})f e^{-\Gamma(u_{p})} \varphi = \lim_{\delta \to 0} \int_{\{u> \delta\}} h(u)f e^{- \Gamma(u)} \varphi \stackrel{u>0}{=} \int_{\Omega} h(u)f e^{- \Gamma(u)} \varphi.
		\end{equation}

		Now we analyze the first term on the right-hand of \eqref{passaggiolimiteh}, we fix $V_{\delta}(u_{p}) e^{- \Gamma(u_{p})} \varphi$ ($V_\delta$ is defined in \eqref{Vdelta}) with $0 \le \varphi \in C^{1}_{c}(\Omega)$ as test function in the weak formulation of \eqref{approxp=1}, obtaining 
		\begin{equation*}
			\begin{aligned}
				&\int_{\Omega}|\nabla u_{p}|^{p}  V'_{\delta}(u_{p})  e^{- \Gamma(u_{p})} \varphi - \int_{\Omega} g(u_{p})| \nabla u_{p}|^{p}  V_{\delta}(u_{p})  e^{-\Gamma(u_{p})}   \\ +&\int_{\Omega} | \nabla u_{p}|^{p-2} \nabla u_{p} \cdot  \nabla \varphi V_{\delta}(u_{p}) e^{- \Gamma(u_{p})}  +\int_{\Omega} g(u_{p}) | \nabla u_{p}|^{p} V_{\delta}(u_{p}) e^{ -\Gamma(u_{p})}  \varphi= \int_{\Omega} h(u_{p})f V_{\delta}(u_{p}) e^{-\Gamma(u_{p})}  \varphi,
			\end{aligned}
		\end{equation*}
		which implies
		$$\int_{\{u_{p}\le \delta\}} h(u_{p})f e^{- \Gamma(u_{p})}\varphi \le \int_{\Omega} h(u_{p}) f V_{\delta}(u_{p}) e^{- \Gamma(u_{p})} \varphi \le  \int_{\Omega} |\nabla u_{p}|^{p-2} \nabla u_{p} \cdot \nabla \varphi V_{\delta}(u_{p}) e^{- \Gamma(u_{p})}. $$
		Through the Lebesgue Theorem we deduce
		\begin{equation}\label{hf esponenzionale delta minore zero}
			\lim_{\delta \to 0}\limsup_{p \to 1^+} \int_{\{u_{p} \le \delta\}}  h(u_{p}) f e^{-\Gamma(u_{p})}\varphi \le \int_{\{u=0\}} z \cdot \nabla \varphi e^{-\Gamma(u)} \stackrel{u>0}{=} 0.
		\end{equation}
		From \eqref{hf esponenzionale limite in p e delta} and \eqref{hf esponenzionale delta minore zero}, one gets 
		\begin{equation}\label{hf esponenziale finale}
			\lim_{p \to 1^+} \int_{\Omega}  h(u_{p}) f e^{-\Gamma(u_{p})} \varphi = \int_{\Omega}  h(u) f e^{-\Gamma(u)} \varphi.
		\end{equation}
		Hence we have shown \eqref{uguaglianza esponente gamma}.
		
		\medskip
		
		Observe that  as  $h(u)f e^{- \Gamma(u)} \geq 0 $ then, by  \cite[Lemma 2.3]{gop},   $z e^{- \Gamma(u)} \in \DM (\Omega)$, namely \eqref{ze}. 
		
		\medskip
		Finally we show \eqref{disuguaglianza in distribuzione dell'equazione}. Recalling \eqref{240} one has that, up to  subsequences, $\Gamma_{p}(u_{p}) \to \Gamma(u)$ a.e. as $p\to 1^+$, and 
		$$\int_{\Omega} |D \Gamma(u)| \varphi \le \liminf_{p \to 1^+} \int_{\Omega} | \nabla \Gamma_{p}(u_{p})|^{p} \varphi \le C,$$
		by weak lower semicontinuity (again we also used Young's inequality as for \eqref{240}). Observe that $C$ does not depend on $p$ thanks to \eqref{stima1}.
		
		Hence this allows to take $p\to 1^+$ in \eqref{problema} obtaining
		\begin{equation}\label{zdm}
			\int_{\Omega} z \cdot \nabla \varphi + \int_{\Omega} | D \Gamma (u)| \varphi \le \int_{\Omega}  h(u)f \varphi,
		\end{equation} 
		where for the right-hand we have reasoned analogously as to proven \eqref{hf esponenziale finale}.  Indeed, one has that ($\delta\le 1$) 
		\begin{equation*}\label{passaggioh}
			\lim_{\delta \to 0}\limsup_{p \to 1^+} \int_{\{u_{p} \le \delta\}}  h(u_{p}) f \varphi \le \lim_{\delta \to 0}\limsup_{p \to 1^+} \frac{1}{e^{-\Gamma(1)}}\int_{\{u_{p} \le \delta\}}  h(u_{p}) f e^{-\Gamma(u_{p})} \varphi =0.
		\end{equation*}
		
		{Let also note that \eqref{zdm} gives $z\in \DM_{\rm loc}(\Omega)$.} 	This concludes the proof.
	\end{proof}

	\subsection{Identification of the vector field $z$ and boundary datum}
	
	For the next two lemmas we need to define the following function
	\begin{equation}\label{tildeGamma}
		\tilde{\Gamma}_p(s) := \int_0^s (T_k(\sigma)g(\sigma))^\frac{1
		}{p} \ d\sigma,	
	\end{equation}
	where $\tilde{\Gamma}(s):=\tilde{\Gamma}_1(s)$.
	
	The next result  clarifies the role of $z$.
	\begin{lemma}\label{lemidentificazionez}
		Let $g$ be positive, bounded and satisfying \eqref{g infinito}, let $h$ satisfy \eqref{h vicino 0} and \eqref{h infinito} and  let $0<f \in \textit{L}^{1}(\Omega)$. Moreover, let $u$ be the function found in Lemma \ref{lemma convergenza delle up ad u} and let $z$ be the vector field found in Lemma \ref{lemesistenzaz}. It holds both
		\begin{equation}\label{equazionep=1}
			-\operatorname{div}z+g(u) |Du|=h(u)f ~~~ \text{in $\mathcal{D}'(\Omega)$,}
		\end{equation}
		and
		\begin{equation}\label{peru}
			{- T_k(u) \operatorname{div}z + T_k(u) |D\Gamma(u)| = h(u)fT_k(u) \ \ 	\text{in } \mathcal{D}'(\Omega) \text{ for any $k>0$.}}
		\end{equation} 		
		Finally it also holds 
		\begin{equation}\label{dju}
			D^j u =0,
		\end{equation}
		and
		\begin{equation}\label{pairing}
			\left(z,DT_k(u) \right)=|DT_k(u)| \ \ \text{as measures in }\Omega \text{ for any $k>0$}.
		\end{equation}	
	\end{lemma}
	\begin{proof}
		We highlight that here we need to make use of the new pairing introduced in \eqref{dist1d} applied to $\beta(s)=-e^{-s}$ and $v=\Gamma(u)$.

		\medskip

		One has 
		\begin{equation}\label{422}
			\begin{aligned}
				(e^{-\Gamma(u)})^{\#}|D \Gamma (u)|&\stackrel{\eqref{disuguaglianza in distribuzione dell'equazione}}{\le} (e^{-\Gamma(u)})h(u)f + (e^{-\Gamma(u)})^{\#}\operatorname{div}z 	\stackrel{\eqref{uguaglianza esponente gamma}}{=}-\operatorname{div}(e^{-\Gamma(u)}z)+ (e^{-\Gamma(u)})^{\#} \operatorname{div}z
				\\&\stackrel{\eqref{dist1d}}{=}\left(z, D [(- e^{-\Gamma(u)})^{\#}] \right) \stackrel{\eqref{finitetotal1d}}{\le} |D (-e^{-\Gamma(u)})|\stackrel{ \eqref{cr} }{=}(e^{-\Gamma(u)})^{\#}|D \Gamma (u)|\,,
			\end{aligned}
		\end{equation}
		which implies that all the inequalities in \eqref{422} are actually equalities.

		\medskip 
		In particular, 
		$$
		\left(z, D [(- e^{-\Gamma(u)})^{\#}] \right) = |D (-e^{-\Gamma(u)})|\,.
		$$ 
		Recalling \eqref{disuguaglianza in distribuzione dell'equazione}, we observe that one can apply Lemma \ref{lemma derivata salto nulla} with $\alpha(s)=s$, $\beta (s)=-e^{-s}$,  and $w=\Gamma(u)$ in order to deduce that $|D^{j}\Gamma(u)|=0$ and, as $\Gamma$ is increasing,   that \eqref{dju} holds.

		\medskip 
	Now we want to show the reverse inequality of \eqref{disuguaglianza in distribuzione dell'equazione} in order to get \eqref{equazionep=1}; one has 
		\begin{equation}\label{423e}
			\begin{aligned}
				e^{-\Gamma(u)}|D \Gamma (u)|&\stackrel{\eqref{disuguaglianza in distribuzione dell'equazione}}{\le} e^{-\Gamma(u)}(h(u)f + \operatorname{div}z) 	\stackrel{\eqref{uguaglianza esponente gamma}}{=} -\operatorname{div}(e^{-\Gamma(u)}z)+ (e^{-\Gamma(u)})\operatorname{div}z \\ & 
				\stackrel{\eqref{dist1}}{=}\left(z, D \left(- e^{-\Gamma(u)}\right) \right)  \stackrel{\eqref{finitetotal1}}{\le} |D (-e^{-\Gamma(u)})| \stackrel{ \eqref{cr} }{\le}e^{-\Gamma(u)}|D \Gamma (u)|.
			\end{aligned}
		\end{equation}
		Hence, we get a chain of equalities between measures that in particular implies that
		$$ e^{-\Gamma(u)}(-\operatorname{div}z+|D\Gamma(u)|)= e^{-\Gamma(u)} h(u)f ~~\text{in $\mathcal{D}'(\Omega)$.} $$
		
		Since  $\Gamma(u) \in BV(\Omega)$ (see  Lemma \ref{lemma convergenza delle up ad u}) it is finite $\mathcal{H}^{N-1}$-a.e., then   $e^{-\Gamma(u)}>0$ $\mathcal{H}^{N-1}\text{-a.e. in }  \Omega$; thus,  we get the reverse inequality of \eqref{disuguaglianza in distribuzione dell'equazione}, that is 
		\begin{equation}\label{bju}
			-\operatorname{div}z+|D \Gamma(u)| =  h(u)f ~~~ \text{in} \ \mathcal{D}'(\Omega),
		\end{equation}
		which, in turn, by applying Lemma \ref{chainrule} gives \eqref{equazionep=1}. 
		
		\medskip

		 	To prove \eqref{peru} we test  \eqref{bju} with $(\rho_\epsilon*T_k(u))\varphi$ where $k>0$, $0\le \varphi \in C^1_c(\Omega)$ and $\rho_\epsilon$ is a sequence of smooth mollifiers. For $\varepsilon$ small enough, this takes to 		
			$$-\int_\Omega (\rho_\epsilon*T_k(u))\varphi \operatorname{div}z + \int_{\Omega} (\rho_\epsilon*T_k(u))\varphi |D\Gamma(u)| = \int_{\Omega}h(u)f(\rho_\epsilon*T_k(u))\varphi.$$
			Now observe that $(\rho_\epsilon*T_k(u))$ converges $\mathcal{H}^{N-1}$ a.e. to $T_k(u)^*$ as $\epsilon\to 0$ and $T_k(u)^*\le k$. 
			Then, as it follows from Lemma \ref{lemma convergenza delle up ad u} that $h(u)f \in L^1_{\rm_{loc}}(\Omega)$ and $\Gamma(u) \in BV(\Omega)$, one can take $\epsilon\to 0$ applying the Lebesgue Theorem. This implies that \eqref{peru} holds.
		
		\medskip

		It is left to show \eqref{pairing}; we take $T_k(u_p)\varphi$ with $0\le\varphi\in C^1_c(\Omega)$ as a test function in the weak formulation of \eqref{approxp=1}; this takes to 
		\begin{equation}\label{provacampoz1}
			\int_\Omega |\nabla T_k(u_p)|^p\varphi + \int_\Omega |\nabla u_p|^{p-2}\nabla u_p \cdot \nabla \varphi T_k(u_p) +\int_\Omega |\nabla \tilde{\Gamma}_p(u_p)|^p\varphi = \int_\Omega h(u_p)fT_k(u_p)\varphi,
		\end{equation}
		where $\tilde{\Gamma}_p$ is defined in \eqref{tildeGamma}. Now observe that an application of the Young inequality gives
		\begin{equation}\label{provacampoz2}
			\int_\Omega |\nabla T_k(u_p)|\varphi + \int_\Omega |\nabla \tilde{\Gamma}_p(u_p)|\varphi \le \frac{1}{p}\int_\Omega |\nabla T_k(u_p)|^p\varphi + \frac{1}{p}\int_\Omega |\nabla \tilde{\Gamma}(u_p)|^p\varphi + \frac{2(p-1)}{p}\int_\Omega \varphi.
		\end{equation}
		Hence gathering \eqref{provacampoz2} into \eqref{provacampoz1} yields to
		\begin{equation}\label{provacampoz3}
			\begin{aligned}
				&\int_\Omega |\nabla T_k(u_p)|\varphi + \int_\Omega |\nabla \tilde{\Gamma}_p(u_p)|\varphi 
				\\
				&\le \int_\Omega h(u_p)fT_k(u_p)\varphi - \int_\Omega |\nabla u_p|^{p-2}\nabla u_p \cdot \nabla \varphi T_k(u_p) + \frac{2(p-1)}{p}\int_\Omega \varphi.
			\end{aligned}
		\end{equation}
		Hence we can take the liminf as $p\to 1^+$ in \eqref{provacampoz3} using weak lower semicontinuity for the left-hand. For the first term on the right-hand one can use the Lebesgue Theorem since ($\delta <s_1< k$) 
		$$h(u_p)fT_k(u_p)\le c_1 \delta^{1-\gamma}f \chi_{\{u_p\le \delta\}} + k\sup_{s\in[\delta,\infty)}h(s) f \chi_{\{u_p> \delta\}},$$
		which is strongly compact in $L^1(\Omega)$ with respect to $p$. The second term on the right-hand  passes to the limit while the third term degenerates as $p\to 1^+$. Hence one has
		\begin{equation*}
			\begin{aligned}
				\int_\Omega |D T_k(u)|\varphi + \int_\Omega |D \tilde{\Gamma}(u)|\varphi &\le \int_\Omega h(u)fT_k(u)\varphi - \int_\Omega z \cdot \nabla \varphi T_k(u) 
				\\
				&\stackrel{\eqref{peru}}{=} -\int_\Omega T_k(u)\operatorname{div}z \varphi + \int_\Omega |D\tilde{\Gamma}(u)|\varphi - \int_\Omega z \cdot \nabla \varphi T_k(u)
				\\
				&= \int_\Omega (z, DT_k(u))\varphi + \int_\Omega |D\tilde{\Gamma}(u)|\varphi,
			\end{aligned}
		\end{equation*}
		{where we also got advantage of $D^ju =0$, by writing $T_k(u)|D\Gamma(u)| = |D\tilde{\Gamma}(u)|$ }
		In particular this means 
		\begin{equation*}
			\int_\Omega |D T_k(u)|\varphi \le  \int_\Omega (z, DT_k(u))\varphi,
		\end{equation*}
		and, being the reverse inequality trivial, this shows \eqref{pairing}. 
		
		The proof is concluded.
	\end{proof}
	
	\begin{remark}
		
		We explicitly  remark that the request of positivity on $g$  is  needed to deduce that $\Gamma$ is an increasing function. It is worth mentioning that Theorem \ref{teorema per p=1} continues to hold in case  $g$ is only nonnegative but $\Gamma$ is  still a well defined increasing function.
	\end{remark}
	
	Finally we deal with the boundary datum.
	
	\begin{lemma}\label{lemdatobordo}
		Let $g$ be positive, bounded and satisfying \eqref{g infinito}, let $h$ satisfy \eqref{h vicino 0} and \eqref{h infinito} and  let $0<f \in \textit{L}^{1}(\Omega)$. Moreover, let $u$ be the function found in Lemma \ref{lemma convergenza delle up ad u} and let $z$ be the vector field found in Lemma \ref{lemesistenzaz}. Then $u(x)=0$ $\mathcal{H}^{N-1}$ almost everywhere in $\Omega$.
	\end{lemma}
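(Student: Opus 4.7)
The idea is to show that the BV-trace $\tau:=\mathrm{Tr}(\Gamma(u))$ on $\partial\Omega$ vanishes $\mathcal{H}^{N-1}$-a.e., which is equivalent to $u=0$ on $\partial\Omega$ because $g>0$ makes $\Gamma$ strictly increasing with $\Gamma^{-1}(0)=0$. I plan to derive two opposite estimates on $\int_{\partial\Omega}\tau\,d\mathcal{H}^{N-1}$: an upper one via BV lower semicontinuity applied to the zero-extensions of $\Gamma(u_p)$, and a lower one via Green's formula on the field $(1-e^{-\Gamma(u)})z$; the elementary pointwise inequality $s\ge 1-e^{-s}$ (strict for $s>0$) then forces $\tau\equiv 0$.

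Since $u_p\in W^{1,p}_0(\Omega)$ and $\Gamma$ is Lipschitz with $\Gamma(0)=0$, the zero-extension $\Gamma(u_p)\chi_\Omega$ belongs to $W^{1,p}(\mathbb{R}^N)$ with total variation $\int_\Omega g(u_p)|\nabla u_p|$, and it converges in $L^1(\mathbb{R}^N)$ to $\Gamma(u)\chi_\Omega$ by Lemma \ref{lemma convergenza delle up ad u}. Lower semicontinuity of the BV seminorm on $\mathbb{R}^N$, together with the standard decomposition of the total variation of a BV extension by zero, gives
\begin{equation*}
|D\Gamma(u)|(\Omega)+\int_{\partial\Omega}\tau\,d\mathcal{H}^{N-1}\le\liminf_{p\to 1^+}\int_\Omega g(u_p)|\nabla u_p|\,dx.
\end{equation*}
Young's inequality yields $\int_\Omega g(u_p)|\nabla u_p|\le\frac{1}{p}\int_\Omega g(u_p)|\nabla u_p|^p+\frac{\|g\|_\infty|\Omega|}{p'}$, with the last term vanishing as $p\to 1^+$. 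Testing \eqref{approxp=1} with the admissible function $1-e^{-\Gamma(u_p)}\in W^{1,p}_0(\Omega)\cap L^\infty(\Omega)$ (cf.\ Remark \ref{estensione funzioni test}) produces $\int_\Omega g(u_p)|\nabla u_p|^p=\int_\Omega h(u_p)f(1-e^{-\Gamma(u_p)})\,dx$, and the integrand is dominated by $Cf\in L^1(\Omega)$ since $s\,h(s)$ is uniformly bounded on $[0,\infty)$ (by \eqref{h vicino 0} with $\gamma\le 1$ near the origin and by \eqref{h infinito} away from it) and $1-e^{-\Gamma(u_p)}\le\|g\|_\infty u_p$. The Lebesgue theorem then gives
\begin{equation*}
\limsup_{p\to 1^+}\int_\Omega g(u_p)|\nabla u_p|\,dx\le\int_\Omega h(u)f(1-e^{-\Gamma(u)})\,dx.
\end{equation*}

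Next, set $z':=(1-e^{-\Gamma(u)})z$. Combining the Leibniz formula \eqref{27}, the chain rule (Lemma \ref{chainrule}) which gives $D(1-e^{-\Gamma(u)})=e^{-\Gamma(u)}D\Gamma(u)$, the pairing identity $(z,D\Gamma(u))=|D\Gamma(u)|$ already used in the proofs of Lemmas \ref{lemesistenzaz} and \ref{lemidentificazionez}, and equation \eqref{equazionep=1}, a direct computation produces $\operatorname{div}z'=|D\Gamma(u)|-h(u)f(1-e^{-\Gamma(u)})$; both summands are bounded measures on $\Omega$ (the second by the same $L^1$-domination as above), so $z'\in\mathcal{DM}^\infty(\Omega)$. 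Green's formula \eqref{green} with $v\equiv 1$ then yields
\begin{equation*}
|D\Gamma(u)|(\Omega)-\int_\Omega h(u)f(1-e^{-\Gamma(u)})\,dx=\int_{\partial\Omega}[(1-e^{-\Gamma(u)})z,\nu]\,d\mathcal{H}^{N-1},
\end{equation*}
and by \eqref{des3} the right-hand side is bounded in absolute value by $\int_{\partial\Omega}(1-e^{-\tau})\,d\mathcal{H}^{N-1}$, using $\|z\|_\infty\le 1$ and the fact that $\tau\ge 0$.

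Combining the two displayed estimates produces $\int_{\partial\Omega}\tau\,d\mathcal{H}^{N-1}\le\int_{\partial\Omega}(1-e^{-\tau})\,d\mathcal{H}^{N-1}$; since the integrand $\tau-(1-e^{-\tau})$ is nonnegative and strictly positive whenever $\tau>0$, this forces $\tau=0$ $\mathcal{H}^{N-1}$-a.e.\ on $\partial\Omega$ and hence $u=0$ on $\partial\Omega$. I expect the main technical hurdles to be the rigorous chain-rule computation of the Anzellotti pairing $(z,D(1-e^{-\Gamma(u)}))=e^{-\Gamma(u)}|D\Gamma(u)|$ and the verification that $z'$ has a globally (and not merely locally) bounded divergence measure; both rest on the tools collected in Section \ref{sec:prel} together with the identifications already established in Lemmas \ref{lemesistenzaz} and \ref{lemidentificazionez}.
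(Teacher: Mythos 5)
Your argument is correct, and it takes a genuinely different route from the paper's. The paper tests with $T_k(u_p)$, passes to the $\liminf$ using BV lower semicontinuity of the zero-extensions of $T_k(u_p)$ and $\tilde\Gamma_p(u_p)$, and then rewrites $\int_\Omega h(u)fT_k(u)$ via the distributional identity \eqref{peru} and Green's formula \eqref{green} to produce the exact identity
\begin{equation*}
\int_{\partial\Omega}\bigl([T_k(u)z,\nu]+T_k(u)\bigr)\,d\mathcal H^{N-1}+\int_{\partial\Omega}\tilde\Gamma(u)\,d\mathcal H^{N-1}=0,
\end{equation*}
from which $\tilde\Gamma(u)=0$ on $\partial\Omega$ follows by the trace bound $|[T_k(u)z,\nu]|\le T_k(u)$. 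You instead test with the bounded nonlinearity $1-e^{-\Gamma(u_p)}$, use Young plus BV lower semicontinuity to get the upper bound $|D\Gamma(u)|(\Omega)+\int_{\partial\Omega}\tau\le\int_\Omega h(u)f(1-e^{-\Gamma(u)})$, and then apply Green's formula to $(1-e^{-\Gamma(u)})z$ to get a matching lower bound involving $\int_{\partial\Omega}(1-e^{-\tau})$; the strict convexity gap $s-(1-e^{-s})>0$ for $s>0$ closes the argument. Your route avoids introducing $\tilde\Gamma$ and the auxiliary distributional identity \eqref{peru}, at the cost of having to verify by hand that $(1-e^{-\Gamma(u)})z\in\DM(\Omega)$ (globally, not just locally); the paper instead leans on the already-established pointwise bound \eqref{des3} for $[T_k(u)z,\nu]$. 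Both rely on the same core infrastructure: chain rule for $\Gamma$, $D^j u=0$, the pairing identity, and the Lipschitz boundary needed for Green's formula.

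One small inaccuracy in the write-up: the claim that ``$s\,h(s)$ is uniformly bounded on $[0,\infty)$'' is false in general (take $h\equiv 1$). What you actually need, and what does hold under \eqref{h vicino 0}--\eqref{h infinito}, is that $h(s)\min\{1,\|g\|_\infty s\}$ is bounded on $[0,\infty)$: for $s\le s_1$ use $h(s)\|g\|_\infty s\le c_1\|g\|_\infty s^{1-\gamma}\le c_1\|g\|_\infty s_1^{1-\gamma}$, while for $s>s_1$ use $h(s)\cdot 1\le\sup_{[s_1,\infty)}h$. This is exactly the split the paper performs when dominating $h(u_p)fT_k(u_p)$, and it gives the needed domination $h(u_p)f(1-e^{-\Gamma(u_p)})\le Cf$. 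With that correction, the proof is complete. You should also record explicitly (as the paper does) that when $h(0)=\infty$ the a.e.\ positivity $u>0$ ensures pointwise convergence of $h(u_p)(1-e^{-\Gamma(u_p)})$; when $h(0)<\infty$ the bound $1-e^{-\Gamma(u_p)}\to 0$ on $\{u=0\}$ suffices.
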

	\begin{proof}
		Let $u_{p}$ be the solution to \eqref{approxp=1} obtained in Theorem \ref{teorema p>1}. Then let us take $T_{k}(u_{p})$ as test function in \eqref{approxp=1}, obtaining
		$$ \int_{\Omega} | \nabla T_k(u_p)|^{p} + \int_\Omega T_k(u_p)g(u_p)|\nabla u_{p}|^p = \int_{\Omega}h(u_{p})f T_{k}(u_{p}).$$
		For the right-hand once again one  observes that  ($k>\delta$)
		$$h(u_p)fT_k(u_p)\le \delta^{1-\gamma}f \chi_{\{u_p\le \delta\}} + k\sup_{s\in[\delta,\infty)}h(s) f \chi_{\{u_p> \delta\}},$$
		which is strongly compact in $L^1(\Omega)$ with respect to $p$. This allows to apply the generalized Lebesgue Theorem for the right-hand. 	Hence one can take the liminf as $p\to1^+$ in the previous; indeed one can use weak lower semicontinuity on the left-hand after an application of the Young inequality, recalling also that $u_p$ is zero on the boundary of $\Omega$. 
		
		This proves that
		\begin{equation}\label{datobordo1}
			\int_{\Omega} |D T_{k}(u)|+ \int_{\partial\Omega} T_k(u) \ d 	\mathcal{H}^{N-1} + \int_{\Omega} |D \tilde{\Gamma}(u)|+ \int_{\partial\Omega} \tilde{\Gamma}(u) \ d \mathcal{H}^{N-1} \le \int_{\Omega}h(u)f T_{k}(u),
		\end{equation}
		where $\tilde{\Gamma}$ is defined in \eqref{tildeGamma}.
		
		Since $h(u)fT_k(u) \in L^1(\Omega)$ and $\Gamma(u)\in BV(\Omega)$, one has
		\begin{equation}\label{datobordo2}
			\begin{aligned}
				\int_\Omega h(u)fT_k(u)&\stackrel{\eqref{peru}}{=} -\int_\Omega T_k(u)\operatorname{div}z + \int_\Omega |D\tilde{\Gamma}(u)| \\
				&\stackrel{\eqref{green}}{=} \int_\Omega (z,DT_k(u)) - \int_{\partial \Omega} [T_k(u)z, \nu] \ d\mathcal H^{N-1} + \int_\Omega |D\tilde{\Gamma}(u)|				\\
				&= \int_\Omega |DT_k(u)| - \int_{\partial \Omega} [T_k(u)z, \nu] \ d\mathcal H^{N-1} + \int_\Omega |D\tilde{\Gamma}(u)|.
			\end{aligned}
		\end{equation}
		Then gathering \eqref{datobordo2} into \eqref{datobordo1}, one yields to		
		\begin{equation*}
			\int_{\partial\Omega} ([T_k(u)z,\nu]+T_k(u)) \ d 	\mathcal{H}^{N-1} + \int_{\partial\Omega} \tilde{\Gamma}(u) \ d \mathcal{H}^{N-1} =0,
		\end{equation*}
		which, since $|[T_k(u)z,\nu]| \le T_k(u)$ on $\partial\Omega$ (recall \eqref{des3}), it gives that $\tilde{\Gamma}(u)$ (and so $u$) is identically null on $\partial\Omega$. This concludes the proof.	
	\end{proof}
	
	As consequence of the previous results we can now prove Theorem $\ref{teorema per p=1}$.
	\begin{proof}[Proof of Theorem $\ref{teorema per p=1}$]
		Let $u_{p}$ be the solution to \eqref{approxp=1} obtained in Theorem \ref{teorema p>1}. Then the proof follows from Lemmas \ref{lemma convergenza delle up ad u}, \ref{lemesistenzaz}, \ref{lemidentificazionez} and \ref{lemdatobordo}.
	\end{proof}
	
	\section{Some  extensions and remarks}
	\label{sec:ex}
	
	\subsection{The case of a nonnegative datum $f$}
	
	Up to now  we have required the positivity of the datum $f$. Now we want to consider  the case of a datum $f$ which is only  nonnegative; i.e. we consider  \begin{equation}\label{pb123}
		\begin{cases}
			- \Delta_1 u +g(u)|Du|=h(u)f &\text{in $\Omega$,} \\
			u=0 &\text{on $\partial \Omega$,}
		\end{cases}
	\end{equation}
	with $f\in L^1 (\Omega)$ being a nonnegative function, and $h,g$ as before.  
	\medskip
	
	Let explicitly underline that the extension of Theorem \ref{teorema per p=1}  is straightforward in the case  $h(0)<\infty$ and $f$  nonnegative the proof being de facto   the one already presented. Hence, without loosing generality, here  we assume $h(0)=\infty$.
	
	\medskip
	
	As we will see the existence of a solution can be obtained with some technical modifications both in the definition of solution of \eqref{pb123}, which  needs to be properly intended, and in the proof that is a suitable adaptation of the one of Theorem \ref{teorema per p=1}. 
	
	\medskip
	
	Here is  how the notion of  solution to \eqref{pb123}	has to be intended.
	
	\begin{defin}
		\label{weakdef_non}
		A nonnegative $u\in BV(\Omega)$ is a solution to \eqref{pb123} if $\chi_{\{u>0\}}\in BV_{\rm loc}(\Omega)$, $D^{j}u=0$, $g(u)\in L^1_{\rm loc}(\Omega, |Du|),  h(u)f \in L^1_{\rm loc}(\Omega)$,  and if there exists $ z\in \mathcal{D}\mathcal{M}^\infty_{\rm  loc}(\Omega)$ with $||z||_{L^\infty(\Omega)^N}\le 1$ such that 
		\begin{align}
			&-\chi^{\ast}_{\{u>0\}} \operatorname{div}z + g(u)|Du| =  h(u)f \ \ \text{in} \ \mathcal{D}'(\Omega), \label{def_distrp=1_non}
			\\
			&(z,DT_k(u))=|DT_k(u)| \label{def_zp=1_non} \ \ \ \ 	\text{as measures in } \ \Omega \text{ for any }k>0,
			\\
			& 	u(x)=0 \label{def_bordop=1_non} \ \ \ \text{for $\mathcal{H}^{N-1}$-a.e. $x \in \partial \Omega$.}
		\end{align}
	\end{defin} 
	
	\begin{remark}
		As one can see the main difference with respect to Definition \ref{def_p=1} consists in the presence of the characteristic function  $\chi_{\{u>0\}}$ in \eqref{def_distrp=1_non} which is a natural request as, in this case, we cannot infer $u>0$ from $h(u)f \in L^1_{\rm loc}(\Omega)$  as in Remark \ref{423}
	\end{remark}
	
	Let us state the existence result for this section.
	
	\begin{theorem}\label{teo_p=1_non}
		Let $g$ be positive, bounded and satisfying \eqref{g infinito} and let $h$ satisfy \eqref{h vicino 0} and \eqref{h infinito} with $h(0)=\infty$. Finally let $0\le f \in \textit{L}^{1}(\Omega)$. Then there exists a solution to \eqref{pb123} in the sense of Definition \ref{weakdef_non}.
	\end{theorem}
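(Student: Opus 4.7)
The plan is to mimic the proof of Theorem~\ref{teorema per p=1}: we approximate \eqref{pb123} by the $p$-Laplace problems \eqref{approxp=1} with datum $f$ and pass $p\to 1^+$, noting that strict positivity of $f$ is used only marginally in the previous proof. Specifically, the a priori $BV$-bounds on $u_p$ and $\Gamma_p(u_p)$, the existence of the limit $u\in BV(\Omega)$, the weak construction of $z\in L^\infty(\Omega)^N$ with $\|z\|_\infty\le 1$, the chain-rule identity $|D\Gamma(u)|=g(u)|Du|$, the absence of a jump part $D^ju=0$, the pairing \eqref{def_zp=1_non} and the boundary identification \eqref{def_bordop=1_non} all follow from Lemmas~\ref{lemma convergenza delle up ad u}--\ref{lemdatobordo} without any change (these arguments never invoke $f>0$). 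Moreover, since $h(0)=\infty$ and $h(u)f\in L^1_{\rm loc}(\Omega)$, we still have $\{u=0\}\subset\{f=0\}$ up to a set of zero Lebesgue measure, so $h(u)f$ is unambiguously $0$ on $\{u=0\}$.

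The essential new ingredient is the $BV$-regularity of $\chi_{\{u>0\}}$. To obtain it, I would take $\psi_n(s):=\min(ns,1)$ and test \eqref{approxp=1} with $\psi_n(u_p)\varphi$ for a fixed $0\le\varphi\in C^1_c(\Omega)$. The resulting identity, combined with the a priori estimates of Lemma~\ref{lemma convergenza delle up ad u}, gives
\[
n\int_{\{0<u_p<1/n\}}|\nabla u_p|^p\,\varphi\ \le\ C(\varphi),
\]
uniformly in $p>1$ and $n$. An application of Hölder with exponents $p$ and $p'$ yields
\[
n\int_{\{0<u_p<1/n\}}|\nabla u_p|\,\varphi\ \le\ C(\varphi)^{1/p}\,n^{1-1/p}\,|\{0<u_p<1/n\}|^{1/p'}.
\]
Since $n^{1-1/p}\to 1$ and $|\cdot|^{1/p'}\to 1$ (or $0$) as $p\to 1^+$, the lower semicontinuity of the total variation together with the $L^1$-convergence $\psi_n(u_p)\to\psi_n(u)$ gives
\[
|D\psi_n(u)|(U)\ \le\ C(U)\qquad\text{uniformly in }n,
\]
for every open $U\subset\subset\Omega$. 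Passing $n\to\infty$ and using that $\psi_n(u)\to\chi_{\{u>0\}}$ in $L^1_{\rm loc}$, a further lower semicontinuity argument delivers $\chi_{\{u>0\}}\in BV_{\rm loc}(\Omega)$.

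Once the BV-regularity of $\chi_{\{u>0\}}$ is established, the product $\chi^*_{\{u>0\}}\operatorname{div}z$ is a well-defined Radon measure because $\operatorname{div}z$ is absolutely continuous with respect to $\mathcal H^{N-1}$. To derive \eqref{def_distrp=1_non}, I would first rerun the argument of Lemma~\ref{lemesistenzaz} with the new input $\{u=0\}\subset\{f=0\}$: the limit identity for $ze^{-\Gamma(u)}$ now reads $-\operatorname{div}(ze^{-\Gamma(u)})=h(u)fe^{-\Gamma(u)}+\nu$, where $\nu$ is a nonnegative Radon measure concentrated on $\{u=0\}$ produced by the residual contribution of $\int_{\{u_p\le\delta\}}h(u_p)fe^{-\Gamma(u_p)}\varphi$ that in Theorem~\ref{teorema per p=1} vanished thanks to $u>0$. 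Inverting the exponential via \eqref{27} and Lemma~\ref{chainrule}, the measure $\nu$ is exactly absorbed into $\chi^*_{\{u>0\}}\operatorname{div}z$, since on $\{u>0\}$ the equation \eqref{equazionep=1} holds as before, while on $\{u=0\}$ both $g(u)|Du|$ and $h(u)f$ vanish and $\chi^*_{\{u>0\}}\operatorname{div}z$ accounts for the concentrated part. This yields \eqref{def_distrp=1_non}.

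The main obstacle is the uniform $BV$-bound on $\psi_n(u)$: controlling $n\int_{\{u_p<1/n\}}|\nabla u_p|$ requires the exponent $n^{1-1/p}$ to stay harmless, which crucially depends on sending $p\to 1^+$ \emph{before} $n\to\infty$ and on a delicate lower semicontinuity of the total variation applied to a family of composed functions. A secondary technical point is the careful identification of the concentrated measure $\nu$ appearing in the limit of the right-hand side and its correct placement inside the $\chi^*_{\{u>0\}}\operatorname{div}z$ term.
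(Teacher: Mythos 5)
Your overall skeleton coincides with the paper's: approximate by the $p$-Laplace problems \eqref{approxp=1}, pass $p\to1^+$, prove $\chi_{\{u>0\}}\in BV_{\rm loc}(\Omega)$, and then derive the modified distributional identity \eqref{def_distrp=1_non}. Your choice of test function $\psi_n(s)=\min(ns,1)$ is a legitimate alternative to the paper's $(1-V_\delta(u_p))\varphi$, and the Hölder estimate
\[
n\int_{\{0<u_p<1/n\}}|\nabla u_p|\,\varphi\ \le\ n^{1-1/p}\,C(\varphi)^{1/p}\,\|\varphi\|_\infty^{1/p'}\,|\{0<u_p<1/n\}|^{1/p'}
\]
does give, after $p\to1^+$ then $n\to\infty$ and two applications of lower semicontinuity, the local $BV$-bound on $\chi_{\{u>0\}}$. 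The paper's $(1-V_\delta)$ is arguably a bit cleaner since it vanishes near $s=0$ and thus avoids pairing the test function against the singularity of $h$, but your bound $\psi_n\le 1$ compensates. More importantly, the paper tests with $(1-V_\delta(u_p))\varphi$ and keeps \emph{all} the terms, extracting in the limit the full inequality \eqref{nonlimit1}
\[
-\Div(z\chi_{\{u>0\}}) + |D\chi_{\{u>0\}}| + \chi^*_{\{u>0\}}|D\Gamma(u)| \le h(u)f\chi_{\{u>0\}},
\]
which is then the engine of the rest of the argument, whereas you use $\psi_n$ only for the $BV$-estimate.

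There is, however, a genuine gap. You assert that ``the absence of a jump part $D^j u=0$ \ldots all follow from Lemmas~\ref{lemma convergenza delle up ad u}--\ref{lemdatobordo} without any change (these arguments never invoke $f>0$).'' This is false. In Lemma~\ref{lemesistenzaz}, both the exponential identity \eqref{uguaglianza esponente gamma} and the auxiliary inequality \eqref{disuguaglianza in distribuzione dell'equazione} (from which $D^ju=0$ is deduced there) are obtained by exploiting $u>0$ a.e., which in turn comes from $f>0$ together with $h(0)=\infty$; see the steps \eqref{hf esponenzionale delta minore zero} and \eqref{hf esponenziale finale}, both of which use ``$\stackrel{u>0}{=}$''. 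Once $f$ is only nonnegative, $\{u=0\}$ may have positive measure, so one cannot pass to the limit on $\{u_p\le\delta\}$ in the same way, and the identity \eqref{uguaglianza esponente gamma} degenerates to the inequality \eqref{nonlimit4}. The paper therefore derives $D^ju=0$ from the \emph{new} inequality \eqref{nonlimit3v} (which carries the weight $\chi^*_{\{u>0\}}$), via a re-adaptation of Lemma~\ref{lemma derivata salto nulla}; this is a modification you must carry out and cannot simply inherit. Since you later invoke Lemma~\ref{chainrule} (which requires $D^ju=0$), this is not a side issue but a circular dependency in your write-up.

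Finally, your route to \eqref{def_distrp=1_non} — identifying the limit as $-\Div(ze^{-\Gamma(u)})=h(u)fe^{-\Gamma(u)}+\nu$ with $\nu$ concentrated on $\{u=0\}$ and then claiming $\nu$ ``is exactly absorbed into $\chi^*_{\{u>0\}}\Div z$'' — is not a precise argument. To make it rigorous you would have to show that $(1-\chi^*_{\{u>0\}})\Div z = -\nu$, which does not follow from what you have written. The paper avoids this entirely by using Fatou's lemma to get only the one-sided inequality $-\Div(ze^{-\Gamma(u)})\ge h(u)fe^{-\Gamma(u)}$ and then \emph{sandwiching}: combining \eqref{nonlimit3} and \eqref{nonlimit4} with \eqref{27}, \eqref{dist1}, \eqref{finitetotal1} and $D^j\Gamma(u)=0$ forces all the inequalities in the chain to be equalities, which is exactly \eqref{def_distrp=1_non}. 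That inequality-chaining is the step you should adopt instead of identifying and decomposing the residual measure.
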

	\begin{proof}[Sketch of the proof] Here we only highlight  the authentic differences  with the proof of Theorem \ref{teorema per p=1}.
		We consider $u_p\in W^{1,p}_0(\Omega)$, solution to the approximating problems   in \eqref{approxp=1} and whose existence is proven in Theorem \ref{teorema p>1}.
		
		\medskip
		
		First observe that Lemma \ref{lemma convergenza delle up ad u} continues to hold in this case. Therefore,  there exists a nonnegative limit function $u\in BV(\Omega)$ for $u_p$, as $p\to 1^+$, such that $g(u)\in L^1(\Omega, |Du|)$ and  $h(u)f \in L^1_{\rm loc}(\Omega)$. Moreover, reasoning as in the first part of the proof of Lemma \ref{lemesistenzaz}, one gains the existence of a bounded vector field $z$ such that  $|z|\le 1$ in $\Omega$ with $|\nabla u_{p}|^{p-2} \nabla u_{p} \rightharpoonup z$ weakly in $L^{q}(\Omega)^N$ for all $q<\infty$. Let also underline that, as $-\Delta_p u_p = h(u_p)f - g(u_p)|\nabla u_p|^p$ is bounded with respect to  $p$ as measures, so that one deduces that $z\in \DM_{\rm{loc}}(\Omega)$.
		
		\medskip
		
		Now we focus on showing both $D^j u =0$ and \eqref{def_distrp=1_non}.
		
		\medskip
		
		Let $0\le\varphi\in C^1_c(\Omega)$ and let us take $(1-V_\delta(u_p))\varphi$ ($V_\delta$ is defined in \eqref{Vdelta}) as a test function in the weak formulation of \eqref{approxp=1}. Then, taking the liminf as $p\to 1^+$, one gains $\chi_{\{u>0\}}\in BV_{\rm loc}(\Omega)$, yielding to
		
		\begin{equation}\label{nonlimit1}
			-\Div(z\chi_{\{u>0\}}) + |D\chi_{\{u>0\}}| + \chi^*_{\{u>0\}}|D\Gamma(u)| \le h(u)f\chi_{\{u>0\}},
		\end{equation}
		in $\mathcal{D}'(\Omega)$. Moreover, as $|z|\le 1$ in $\Omega$, one has
		\begin{equation*}\label{nonlimit2}
			-\Div(z\chi_{\{u>0\}}) + |D\chi_{\{u>0\}}|\ge -\Div(z\chi_{\{u>0\}}) + (z,D\chi_{\{u>0\}}) \stackrel{\eqref{27}}{=} -\chi^*_{\{u>0\}}\Div z,
		\end{equation*}
		which, gathered into \eqref{nonlimit1}, gives 
		\begin{equation}\label{nonlimit3v}
			-\chi^*_{\{u>0\}}\Div z + \chi^*_{\{u>0\}}|D\Gamma(u)| \le h(u)f\chi_{\{u>0\}},
		\end{equation}
		in $\mathcal{D}'(\Omega)$. Let us stress that  \eqref{nonlimit1} gives that $z\chi_{\{u>0\}}\in \DM_{\rm{loc}}(\Omega)$ (recall \eqref{27}).
		
		Now let $0\le\varphi\in C^1_c(\Omega)$ and let us take $e^{-\Gamma(u_p)}\varphi$ as a test function in the weak formulation of \eqref{approxp=1}. Then, taking the liminf as $p\to1^+$ applying the Fatou Lemma, it allows to deduce
		
		\begin{equation}\label{nonlimit4}
			-\Div(ze^{-\Gamma(u)}) \ge h(u)fe^{-\Gamma(u)}.
		\end{equation}

		One  has
		
		\begin{equation*}
			\begin{aligned}
				(e^{-\Gamma(u)})^{\#}\chi^*_{\{u>0\}} |D \Gamma (u)|&\stackrel{\eqref{nonlimit3v}}{\le} e^{-\Gamma(u)}h(u)f\chi_{\{u>0\}} + (e^{-\Gamma(u)})^{\#}\chi^*_{\{u>0\}}\Div z
				\\
				&\stackrel{\eqref{nonlimit4}}{\le} -\Div(ze^{-\Gamma(u)})\chi^{*}_{\{u>0\}} +  (e^{-\Gamma(u)})^{\#}\chi^*_{\{u>0\}}\Div z 
				\\
				&\stackrel{\eqref{dist1d}}{=} \chi^{ *}_{\{u>0\}}\left(z, D (-e^{-\Gamma(u)})^{\#} \right)  \stackrel{\eqref{finitetotal1d}}{\le}\chi^*_{\{u>0\}} |D e^{-\Gamma(u)}|= (e^{-\Gamma(u)})^{\#} \chi^*_{\{u>0\}} |D \Gamma(u)|,
			\end{aligned}
		\end{equation*}
		where in the last equality we used Lemma \ref{chainrule}. This proves that
		\begin{equation}\label{pairingchi}
			\chi^{ *}_{\{u>0\}}\left(z, D (-e^{-\Gamma(u)})^{\#} \right) =\chi^*_{\{u>0\}} |D e^{-\Gamma(u)}|.
		\end{equation}
		
		Now, as in the proof of Lemma \ref{lemidentificazionez}, we want  to apply  an easy variation of  Lemma \ref{lemma derivata salto nulla}. In fact, as $\chi^{*}_{\{u>0\}}>0$ $\mathcal{H}^{N-1}$-a.e.  on $J_{\Gamma(u)}$, then the very same proof is still valid, and then,   using both \eqref{nonlimit3v} and \eqref{pairingchi} one deduces that  $D^j u = 0$,  and  that  \eqref{nonlimit3v} is equivalent to 
		\begin{equation}\label{nonlimit3}
			-\chi^*_{\{u>0\}}\Div z + |D\Gamma(u)| \le h(u)f\chi_{\{u>0\}}.
		\end{equation}
		
		From now on 	 the proof follows step-by-step the proof of Lemma \ref{lemidentificazionez}; in particular, a suited version of \eqref{423e} involving $\chi^*_{\{u>0\}}$ holds  allowing us to prove the validity of \eqref{def_distrp=1_non}. 
		Also as for  \eqref{peru} one readily gets 
		$$
		{- T_k(u) \chi^{\ast}_{\{u>0\}}\operatorname{div}z + T_k(u) |D\Gamma(u)| = h(u)fT_k(u) \ \ 	\text{in } \mathcal{D}'(\Omega) \text{ for any $k>0$.}}
		$$
		In particular,  this means that $T_k(u)\in L^1(\Omega, \Div z)$. 
		
		\medskip
		
		The proof of \eqref{def_zp=1_non} is not affected by the sign of the datum and follows as   for  \eqref{pairing}.

		\medskip
		
		Finally one has that Lemma \ref{lemdatobordo} applies without any modification in the proof. Indeed, as $T_k(u)\in L^1(\Omega, \Div z)$, one uses that $T_k(u)z\in \DM(\Omega)$ from Lemma \ref{poiu}. This shows \eqref{def_bordop=1_non}. The proof is  concluded.

	\end{proof}

	\subsection{The case of a nonnegative $g$ possibly blowing-up at infinity}
	
	\label{sec:ex_ginf}
	In Section \ref {sec:p=1} we only dealt with positive bounded functions $g$; this has been necessary to apply Lemma \ref{chainrule}, i.e. to deduce
	$$|D\Gamma (u)| = g(u)|Du|.$$
	By the way one can suitably modify  Definition \ref{def_p=1} in order to gain the  existence of a solution in a weaker sense.
	Let explicitly fix the notion of solution in which, due to what  we just said,  do not need to ask for $D^{j}u=0$.
	
	\begin{defin}\label{def_p=1_g_ill}
		Let $0<f \in L^{1}(\Omega)$. A nonnegative $u \in BV(\Omega)$ is a solution to \eqref{pb123} if  $\Gamma(u)\in BV_{\rm loc}(\Omega)$, $h(u)f \in L^{1}_{\rm{loc}}(\Omega)$, and if there exists a vector field $z \in \mathcal{DM}^{\infty}_{\rm{loc}}(\Omega)$, with $\|z\|_{L^\infty(\Omega)^N} \le 1$ satisfying
		\begin{equation}\label{p=1 in senso distribuzionale_g_ill}
			-\operatorname{div}z+|D\Gamma(u)|=h(u)f ~~~  \text{in $\mathcal{D}'(\Omega)$,}
		\end{equation}
		\begin{equation}\label{parring di anzellotti_g_ill}
			(z,DT_k(u))=|DT_k(u)| ~~~ \text{as measures in $\Omega$ for any $k>0$,}
		\end{equation}
		and
		\begin{equation}\label{condizione al bordo per p=1_g_ill}
			u(x)=0 ~~~\text{for $\mathcal{H}^{N-1}$-a.e. $x \in \partial \Omega$.}
		\end{equation}
	\end{defin}
	
	In case of a nonnegative $g$ possibly blowing-up at infinity we then have the following result. 
	
	\begin{theorem}\label{teorema per p=1_g_ill}
		Let $g$ satisfy \eqref{g infinito} and let $h$ satisfy \eqref{h vicino 0}-\eqref{h infinito}. Finally let $0<f \in \textit{L}^{1}(\Omega)$. Then there exists a solution $u \in BV(\Omega)$ to problem \eqref{pb123} in the sense of Definition \ref{def_p=1_g_ill}. Moreover, if $g$ is positive, then $D^{j}u=0$. 
	\end{theorem}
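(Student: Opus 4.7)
The plan is to adapt the argument of Theorem~\ref{teorema per p=1}, based on the same approximation $-\Delta_p u_p + g(u_p)|\nabla u_p|^p = h(u_p)f$ whose solutions exist by Theorem~\ref{teorema p>1}, and to mimic the chain of Lemmas~\ref{lemma convergenza delle up ad u}--\ref{lemdatobordo}. The point of the weaker Definition~\ref{def_p=1_g_ill} is precisely to encode the obstruction that, when $g$ is unbounded at infinity, the chain rule of Lemma~\ref{chainrule} is unavailable: the product $g(u)|Du|$ must be replaced by $|D\Gamma(u)|$ in the distributional equation, and $D^j u=0$ cannot be recovered unless $\Gamma^{-1}$ exists.

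The a priori estimates survive without substantial change. The $BV$-bound on $u_p$ in Lemma~\ref{lemma convergenza delle up ad u} uses only \eqref{g infinito}, the continuity of $g$ on $[0,\bar k]$ and \eqref{h vicino 0}--\eqref{h infinito}, and so produces a limit $u\in BV(\Omega)$ with $h(u)f\in L^1_{\rm loc}(\Omega)$. Testing with $T_1(u_p)$ still gives $\int_\Omega g(u_p)|\nabla u_p|^p\le C$, so by Young's inequality $\Gamma_p(u_p)$ is uniformly bounded in $BV(\Omega)$; since $\Gamma_p\to\Gamma$ locally uniformly on $[0,\infty)$ and $u_p\to u$ a.e., one obtains $\Gamma(u)\in BV_{\rm loc}(\Omega)$. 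The field $z$ with $\|z\|_{L^\infty(\Omega)^N}\le 1$ is the weak $L^q$-limit of $|\nabla u_p|^{p-2}\nabla u_p$ for every $q<\infty$, exactly as in the first part of Lemma~\ref{lemesistenzaz}.

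The core new step is the identification of the equation in $\mathcal{D}'(\Omega)$. Testing the approximating problem with a nonnegative $\varphi\in C^1_c(\Omega)$, combining Young's inequality (which gives $\int\varphi |\nabla\Gamma_p(u_p)|\le p^{-1}\int\varphi g(u_p)|\nabla u_p|^p + (p'^{-1})\int\varphi$) with the $BV$ lower semicontinuity $\int\varphi|D\Gamma(u)|\le\liminf_p\int\varphi|\nabla\Gamma_p(u_p)|$ and letting $p\to 1^+$ yields
\begin{equation*}
-\operatorname{div} z + |D\Gamma(u)|\le h(u)f\qquad\text{in }\mathcal{D}'(\Omega).
\end{equation*}
For the companion identity $-\operatorname{div}(ze^{-\Gamma(u)})=h(u)fe^{-\Gamma(u)}$ one cannot use $e^{-\Gamma(u_p)}\varphi$ as a test function, since $\nabla e^{-\Gamma(u_p)}$ involves the factor $g(u_p)|\nabla u_p|$ which need not belong to $L^p_{\rm loc}$. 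The remedy is to test with $e^{-\Gamma(T_M(u_p))}\varphi$ for $M>0$: because $g\circ T_M$ is bounded, the function is admissible, and on $\{u_p<M\}$ the extra terms containing $g(u_p)|\nabla u_p|^p$ cancel exactly against the corresponding piece of $\nabla e^{-\Gamma(T_M(u_p))}$. Passing first $p\to 1^+$ with $M$ fixed and then $M\to\infty$ leaves only a residual of the form $e^{-\Gamma(M)}\int_{\{u_p\ge M\}}g(u_p)|\nabla u_p|^p\varphi$, which is controlled by $Ce^{-\Gamma(M)}\|\varphi\|_{L^\infty(\Omega)}$ and vanishes thanks to $\Gamma(M)\to\infty$, which is ensured by \eqref{g infinito}.

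With these two facts at hand, Lemma~\ref{lemma derivata salto nulla} applied to $w=\Gamma(u)$ and $\lambda=h(u)f$ gives $D^j\Gamma(u)=0$; then the chain of inequalities displayed in \eqref{422} collapses to equalities, yielding \eqref{p=1 in senso distribuzionale_g_ill}. The pairing identity \eqref{parring di anzellotti_g_ill} is proved exactly as in Lemma~\ref{lemidentificazionez} by testing with $T_k(u_p)\varphi$ and applying Young to both $|\nabla T_k(u_p)|$ and $|\nabla\tilde\Gamma_p(u_p)|$; note that the latter is controlled by $\int T_k(u_p)g(u_p)|\nabla u_p|^p\le k\int g(u_p)|\nabla u_p|^p\le C$, which does not require $g$ to be bounded. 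The boundary condition \eqref{condizione al bordo per p=1_g_ill} is then obtained from Lemma~\ref{lemdatobordo} without modification. Finally, when $g>0$ the function $\Gamma^{-1}$ is a well-defined continuous strictly increasing inverse, so the chain-rule identity $D^j u=(\Gamma^{-1}(\Gamma(u)^+)-\Gamma^{-1}(\Gamma(u)^-))\nu\,\mathcal{H}^{N-1}\res J_{\Gamma(u)}=0$ gives $D^j u=0$. The main obstacle throughout is the admissibility of the exponential test function in the unbounded-$g$ regime, which is circumvented by the truncation in $M$ together with the exponential decay $e^{-\Gamma(M)}\to 0$.
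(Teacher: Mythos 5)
Your proposal follows the same approximation scheme and the same sequence of lemmas as the paper, so at the level of strategy it is the paper's proof. But the paper disposes of this theorem with a single sentence ("the proof is identical\ldots apart from the application of Lemma \ref{chainrule}"), while you have put your finger on a technical point that this dismissal glosses over: when $g$ is unbounded, $\Gamma$ is no longer Lipschitz, so $e^{-\Gamma(u_p)}\varphi$ is \emph{not} an obviously admissible test function in the extended class $W^{1,p}_0(\Omega)\cap L^\infty(\Omega)$ of Remark \ref{estensione funzioni test}; indeed $\nabla\bigl(e^{-\Gamma(u_p)}\bigr)=-g(u_p)\nabla u_p\,e^{-\Gamma(u_p)}$, and $g(u_p)^{\,p-1}e^{-p\Gamma(u_p)}$ need not be bounded for a general continuous $g$ with $\liminf g>0$, so $g(u_p)|\nabla u_p|\,e^{-\Gamma(u_p)}\in L^p$ does not follow from $g(u_p)|\nabla u_p|^p\in L^1$. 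Your remedy --- testing with $e^{-\Gamma(T_M(u_p))}\varphi$, whose admissibility is automatic since $g\circ T_M$ is bounded, computing the exact cancellation on $\{u_p<M\}$, and controlling the residual $e^{-\Gamma(M)}\int_{\{u_p\ge M\}}g(u_p)|\nabla u_p|^p\varphi\le Ce^{-\Gamma(M)}\|\varphi\|_\infty$ using that \eqref{g infinito} forces $\Gamma(M)\to\infty$ --- is correct and fixes this. (Note, in contrast, that the test function $V_\delta(u_p)e^{-\Gamma(u_p)}\varphi$ used elsewhere in Lemma \ref{lemesistenzaz} poses no such problem, since $V_\delta$ vanishes for $s\ge 2\delta$ and hence $V_\delta\,e^{-\Gamma}$ is already Lipschitz in $s$.)

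The remainder of your sketch (the inequality in $\mathcal{D}'(\Omega)$ via Young and lower semicontinuity, the passage through Lemma \ref{lemma derivata salto nulla} with $w=\Gamma(u)$, $\lambda=h(u)f$ to get $D^j\Gamma(u)=0$ and collapse \eqref{422}, the pairing via $T_k(u_p)\varphi$ and $\tilde\Gamma_p$, and the boundary datum via Lemma \ref{lemdatobordo}) mirrors the paper and is sound. One point worth spelling out: the cancellation of the two $\int_\Omega|D\tilde\Gamma(u)|\varphi$ terms in the pairing argument uses, in effect, the identity $|D\tilde\Gamma(u)|=T_k(u)^*|D\Gamma(u)|$ as measures. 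In the bounded-$g$ case this is the Lipschitz chain rule applied directly to $u$, which is unavailable here. Instead one can write $\tilde\Gamma=\Phi\circ\Gamma$ where, on the range of $\Gamma$, $\Phi$ is $k$-Lipschitz (since $\Phi(\Gamma(s_2))-\Phi(\Gamma(s_1))=\int_{s_1}^{s_2}T_k\,g\le k\,(\Gamma(s_2)-\Gamma(s_1))$), and then apply Lemma \ref{chainrule} to $\Phi$ and $\Gamma(u)\in BV_{\rm loc}$ with $D^j\Gamma(u)=0$. Finally, for the last assertion of the theorem you correctly invoke the existence of $\Gamma^{-1}$ when $g>0$, exactly as in the paper's Lemma \ref{lemesistenzaz}.
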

	\begin{proof}
		The proof is identical to the one of Theorem \ref{teorema per p=1} apart from the application of Lemma \ref{chainrule} which  is not needed in order to get \eqref{p=1 in senso distribuzionale_g_ill}.
	\end{proof}

	\section*{Acknowledgement}
	F. Oliva and F. Petitta are partially supported by the Gruppo Nazionale per l’Analisi Matematica, la Probabilità e le loro Applicazioni (GNAMPA) of the Istituto Nazionale di Alta Matematica (INdAM). 
	The authors are in debt with  Prof. Salvador Moll for many fruitful  comments and his crucial suggestion concerning the proof of Lemma \ref{lemma derivata salto nulla}. Finally, we wish to thank the anonymous referee for carefully reading this article and providing some useful comments.

\end{document}